\newcommand{\vol}{{\rm vol}\,}
\newcommand{\pa}{\partial}
\newtheorem{thm}{Theorem}[section]
\newtheorem{prop}[thm]{Proposition}
\newtheorem{Def}[thm]{Definition}
\newtheorem{lem}[thm]{Lemma}
\newtheorem{remark}[thm]{Remark}
\newtheorem{corl}[thm]{Corollary}
\newcommand{\ie}{\emph{i.e.}}
\newcommand{\eps}{\varepsilon}
\def \<{\langle}
\def \>{\rangle}
\def \H{{\cal H}}
\def \H^0{{\cal H}^0 or}
\def \p{\partial}
\def \n{\nabla}
\def \beq{\begin{equation}}
\def \eeq{\end{equation}}
\def \n{\nabla}
\def \eref{\eqref}
\begin{document}

% Title, authors and addresses

% use the thanksref command within \title, \author or \address for footnotes;
% use the corauthref command within \author for corresponding author footnotes;
% use the ead command for the email address,
% and the form \ead[url] for the home page:
% \title{Title\thanksref{label1}}
% \thanks[label1]{}
% \author{Name\corauthref{cor1}\thanksref{label2}}
% \ead{email address}
% \ead[url]{home page}
% \thanks[label2]{}
% \corauth[cor1]{}
% \address{Address\thanksref{label3}}
% \thanks[label3]{}

\title[weighted manifolds]{Heat kernel estimates and the essential spectrum on weighted manifolds}

\author{Nelia Charalambous}
\address{Department of Mathematics and Statistics, University of Cyprus, Nicosia, 1678, Cyprus} \email[Nelia Charalambous]{nelia@ucy.ac.cy}

\author{Zhiqin Lu} \address{Department of
Mathematics, University of California,
Irvine, Irvine, CA 92697, USA} \email[Zhiqin Lu]{zlu@uci.edu}

\thanks{The second author is partially supported by the DMS-12-06748.}
 \date{November 12, 2012}

  \subjclass[2000]{Primary: 58J50;
Secondary: 58E30}

\keywords{drifting Laplacian, essential spectrum, Harnack inequality, heat kernel}
%\begin{abstract} TBA\end{abstract}

\begin{abstract}
We consider a complete noncompact smooth Riemannian manifold $M$ with a weighted measure and the associated drifting Laplacian. We demonstrate that whenever the $q$-Bakry-\'Emery Ricci tensor on $M$ is bounded below,  then we can obtain an upper bound estimate for the heat kernel of the drifting Laplacian from the upper bound estimates of the heat kernels of the Laplacians on a family of related warped product spaces. We apply these results to study the essential spectrum of the drifting Laplacian on $M$.
\end{abstract}

\maketitle

\tableofcontents

\bigskip

\section{Introduction}
We consider a  complete noncompact smooth metric measure space $(M^n,g,e^{-f} dv)$, where $(M,g)$ is a complete noncompact smooth Riemannian manifold with a weighted volume measure $d\mu= e^{-f} dv$ such that $f$ is a smooth function on $M$ and $dv$ is the Riemannian measure. In this paper, we refer to such  a space as a {\it weighted manifold}.

The associated drifting Laplacian to such a weighted manifold is
\[
\Delta_f=\Delta -\nabla f \cdot \nabla,
\]
where $\Delta$ is the Laplace operator and $\nabla$ is the gradient  operator on  the Riemannian manifold $M$. $\Delta_f$ can be extended to a densely defined, self-adjoint, nonpositive definite operator on the space of square integrable functions with respect to the measure $e^{-f}dv$. The Bakry-\'Emery Ricci curvature is given by
\[
\textup{Ric}_f=\textup{Ric}+ \nabla^2 f,
\]
where $\textup{Ric}$ is the Ricci curvature of the Riemannian manifold and $\nabla^2 f$ is the Hessian of the function $f$. For a positive number $q$, the $q$-Bakry-\'Emery Ricci tensor is defined as
\[
\textup{Ric}_f^q=\textup{Ric}+ \nabla^2 f-\frac 1q\n f\otimes\n f=\textup{Ric}_f-\frac 1q\n f\otimes\n f.
\]

We use $\langle\,,\,\rangle$ to denote the inner product  of  the Riemannian metric  and $|\cdot|$ to denote the corresponding norm. Throughout this paper, we shall use the following version of the Bochner formula with respect to the drifting Laplacian.
\begin{equation} \label{fBochner}
\Delta_f|\nabla u|^2=2 |\n^2 u|^2+2\langle\nabla u, \nabla\Delta_f u\rangle + 2 \textup{Ric}_f(\nabla u, \nabla u),
\end{equation}
where $\nabla^2 u$ is the Hessian of $u$ and $|\n^2 u|^2=\sum u_{ij}^2$.

When $f=0$, the above formula is the {usual Bochner formula}  and was used by Li and Yau in ~\cite{LiYau} for their seminal work on the heat equation of Schr\"odinger operators on a Riemannian manifold.
One of the key inequalities used in their proof is the Cauchy inequality $|\nabla^2u|^2\geq (\Delta u)^2/n$. However, if $f\neq 0$, we do not have an analogous   relationship between the Hessian of $u$ and the {\it drifting} Laplacian of $u$. Consequently, there is more subtlety in obtaining the gradient estimate in the $f\neq 0$ case.

In 1985, Bakry and \'Emery had demonstrated that in the case of the drifting Laplacian the relevant Ricci tensor for obtaining a gradient estimate is the $q$-Bakry-\'Emery Ricci tensor~\cite{BE}. By generalizing the Li-Yau  method, Qian was able to prove a Harnack inequality and heat kernel estimates for the drifting Laplacian whenever $\textup{Ric}_f^q\geq 0$~\cite{Qi}.

In this article we will consider a weighted manifold $M$ on which, for some positive integer $q$,
 the $q$-Bakry-\'Emery Ricci tensor is bounded below by a nonpositive constant\footnote{The recent work of Munteanu-Wang~\cites{MuW,MuW2} shows that assuming $\textup{Ric}_f^q$ bounded below  may be too strong  in certain cases. On the other hand, we will see that only assuming $\textup{Ric}_f$ has  a lower bound seems to be too weak. It is therefore an interesting question to find the  appropriate assumptions  in-between  these two. We shall remark on the relevant results in this paper.}. We will associate to $M$ a family of warped products $\tilde M_\eps$. We shall show that the geometric analysis results on  $M$ are closely related to those  on $\tilde M_\eps$ by directly comparing the geometry of $M$ to that of $\tilde M_\eps$.  In this way, we  are  able to extend  previous work in this area   under the sole assumption of ${\rm Ric}_f^q$ bounded below, without any further constraints on the behavior of the function $f$. In particular, we are able to get the heat kernel estimate for the drifting Laplacian from the corresponding estimates in   the Riemannian case effortlessly. The proofs reveal the strong geometric connection of $M$ to the warped product spaces $\tilde M_\eps$. At the same time, they further illustrate the fact that the drifting Laplacian and $q$-Bakry-\'Emery Ricci tensor are projections (in some sense) of the  Laplacian and Ricci tensor of a higher dimensional space.

In the last two sections we apply the previous results to study the essential spectrum of the drifting Laplacian on a weighted manifold.  In Section~\ref{sec7} we will prove that the $L^p$ essential spectrum of the drifting Laplacian is independent of $p$, for all $p\in[1,\infty]$, whenever $\textup{Ric}_f^q$ is bounded below and the weighted volume of the manifold  grows uniformly subexponentially. In Section~\ref{sec6} we demonstrate that for $p\in[1,\infty]$ the $L^p$ essential spectrum of the drifting Laplacian is the nonnegative real line whenever $\textup{Ric}_f^q$ is asymptotically nonnegative.

Finally, we remark that for our heat kernel estimate it was only necessary to assume that the $q$-Bakry-\'Emery Ricci tensor was bounded below. Our upper bound depends on $q$ and when $q\to\infty$ it does not converge. This further confirms that a lower bound on $\textup{Ric}_f$ {alone} is not sufficient for obtaining heat kernel estimates. However, the classical proof of heat kernel bounds for the Laplacian can be generalized whenever ${\rm Ric}_f$ is bounded below under the additional assumption that the gradient of $f$ is bounded.\footnote{See Remark~\ref{rmk510} for details.}\\

{\bf Acknowledgement.} We would like to thank X.-D. Li, O. Munteanu, J.-P. Wang and D.-T. Zhou for the helpful discussions during the preparation of this paper.

\section{Gradient  and Heat Kernel Estimates on Riemannian Manifolds}\label{s2}

In this section we consider a complete noncompact Riemannian manifold $\tilde M$  of dimension $n+q$. We will review certain analytic properties of solutions to the heat equation on  $\tilde M$.   {Let $\tilde \Delta$ be the Laplacian of $\tilde M$}. Denote $\widetilde  {\rm Ric}$ the Ricci curvature tensor on $\tilde M$, and $\tilde B_{\tilde x_o}(r)$  the ball of radius $r$ at a fixed point $\tilde x_o$. The following gradient estimate for positive solutions to the heat equation is well known:
\begin{thm}\cite{SchoenYau_bk}*{Chapter IV, Theorem 4.2} \label{r1}
Let $R>0$ be a large number and assume that $\widetilde  {\rm Ric}\geq -K$ with $K\geq 0$ on $\tilde B_{\tilde x_o}(4R)$. Let $u(x,t)$ be a positive solution to the heat equation
\[
u_t -\tilde\Delta u=0
\]
on $\tilde B_{\tilde x_o}(4R)$. Then for any $\alpha>1$, $u$ satisfies the Li-Yau estimate
\[
\frac{|\n u|^2}{u^2}-\alpha \frac{u_t}{u} \leq  \frac{(n +q) \alpha^2}{2t} +    \frac{(n+q) \, K \, \alpha^2}{2(\alpha-1)} +C(n+q)\frac{\alpha^2}{R^2}\left(\frac{\alpha^2}{\alpha-1}+R\sqrt K\right),
\]
on $\tilde B_{\tilde x_o}(2R),$  where $C(n+q)$ is a constant that only depends on $n+q$.
\end{thm}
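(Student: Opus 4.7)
The plan is to run the classical Li--Yau gradient estimate argument on $\tilde M$, where the Bakry--\'Emery issues do not yet appear and we may use the full Cauchy--Schwarz bound $|\tilde\nabla^2 u|^2 \geq (\tilde\Delta u)^2/(n+q)$. Since the theorem is cited from \cite{SchoenYau_bk}, the goal here is only to outline the standard proof so the reader sees where each term in the estimate comes from.

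\textbf{Step 1: Reduction to the log-heat equation.} Set $h=\log u$, so that $h$ is smooth on $\tilde B_{\tilde x_o}(4R)\times(0,T]$ and satisfies
\begin{equation*}
h_t=\tilde\Delta h+|\tilde\nabla h|^2 .
\end{equation*}
Introduce the auxiliary quantity
\begin{equation*}
F(x,t) \;=\; t\bigl(|\tilde\nabla h|^2 - \alpha\, h_t\bigr).
\end{equation*}
A direct computation using the Bochner formula on $\tilde M$,
\begin{equation*}
\tfrac12\tilde\Delta|\tilde\nabla h|^2 \;=\; |\tilde\nabla^2 h|^2+\langle\tilde\nabla h,\tilde\nabla\tilde\Delta h\rangle+\widetilde{\rm Ric}(\tilde\nabla h,\tilde\nabla h),
\end{equation*}
combined with $|\tilde\nabla^2 h|^2\geq(\tilde\Delta h)^2/(n+q)$ and the assumption $\widetilde{\rm Ric}\geq -K$, yields after rearrangement a differential inequality of the schematic form
\begin{equation*}
\bigl(\tilde\Delta-\partial_t\bigr) F \;\geq\; \tfrac{2t}{n+q}\bigl(|\tilde\nabla h|^2-h_t\bigr)^2 - 2\langle\tilde\nabla h,\tilde\nabla F\rangle - \tfrac{F}{t} - 2Kt|\tilde\nabla h|^2 .
\end{equation*}
The quadratic lower bound on the right is what will eventually be used to dominate the negative terms.

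\textbf{Step 2: Cutoff and localization.} Choose a smooth cutoff $\phi$ on $\tilde M$ of the form $\phi(x)=\psi(\tilde r(x)/R)$ where $\tilde r$ is the distance to $\tilde x_o$ and $\psi$ is a standard bump function that is $1$ on $[0,2]$ and vanishes outside $[0,4]$. On $\tilde B_{\tilde x_o}(4R)$ one has
\begin{equation*}
\tfrac{|\tilde\nabla\phi|^2}{\phi}\le \tfrac{C}{R^2},\qquad \tilde\Delta\phi \;\geq\; -\tfrac{C}{R^2}-\tfrac{C\sqrt{K}}{R},
\end{equation*}
where the second bound uses the Laplacian comparison theorem on $\tilde M$ (this is precisely where the term $R\sqrt{K}$ in the statement enters). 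Consider $G=\phi F$ on $\tilde B_{\tilde x_o}(4R)\times[0,T]$ and let $(x_1,t_1)$ be a point where $G$ attains its maximum. If $G(x_1,t_1)\leq 0$ there is nothing to prove, so assume $F(x_1,t_1)>0$ and $t_1>0$; one may also assume $x_1$ is not a cut point by the standard Calabi trick.

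\textbf{Step 3: Maximum principle at $(x_1,t_1)$.} At the maximum one has $\tilde\nabla G=0$, $\tilde\Delta G\leq 0$, and $G_t\geq 0$. Substituting $\tilde\nabla F=-(F/\phi)\tilde\nabla\phi$ into the inequality from Step~1 and multiplying by $\phi$ gives, at $(x_1,t_1)$,
\begin{equation*}
0 \;\geq\; \tfrac{2t_1\phi}{n+q}\bigl(|\tilde\nabla h|^2-h_t\bigr)^2 - 2\phi\langle\tilde\nabla h,\tilde\nabla F\rangle\bigr|_{\cdot} - \tfrac{G}{t_1} - 2K t_1\phi|\tilde\nabla h|^2 + (\tilde\Delta\phi)F ,
\end{equation*}
and after expanding $(|\tilde\nabla h|^2-h_t)^2$ in terms of $F/t_1$ and $|\tilde\nabla h|^2$, one reduces everything to a quadratic inequality in the single quantity $G$. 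Solving this quadratic, with the cutoff bounds from Step~2 absorbed into the lower-order terms, produces
\begin{equation*}
G(x_1,t_1) \;\leq\; \tfrac{(n+q)\alpha^2}{2} + \tfrac{(n+q)K\alpha^2 t_1}{2(\alpha-1)} + C(n+q)\tfrac{\alpha^2 t_1}{R^2}\Bigl(\tfrac{\alpha^2}{\alpha-1}+R\sqrt{K}\Bigr) ,
\end{equation*}
where $\alpha>1$ is used to absorb the cross term $\langle\tilde\nabla h,\tilde\nabla h\rangle\cdot h_t$ via Young's inequality (this is the source of the factor $\alpha^2/(\alpha-1)$). Restricting the resulting bound on $G$ to $\tilde B_{\tilde x_o}(2R)$, where $\phi\equiv 1$, and dividing by $t$ yields the claimed estimate for $(|\tilde\nabla u|^2/u^2)-\alpha u_t/u$.

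\textbf{Main obstacle.} The genuinely technical step is Step~3: the bookkeeping needed to absorb the cutoff error terms $(\tilde\Delta\phi)F$, $\tfrac{|\tilde\nabla\phi|^2}{\phi}F$ and $\phi\langle\tilde\nabla h,\tilde\nabla\phi\rangle F/\phi$ into the dominant quadratic $\tfrac{2t\phi}{n+q}(|\tilde\nabla h|^2-h_t)^2$ via weighted Young inequalities, while keeping explicit track of the dependence on $\alpha$, $K$, and $R$ so as to produce precisely the three additive terms in the stated bound. Once that algebra is carried out cleanly, the rest of the argument is standard.
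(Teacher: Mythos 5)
Your outline is the standard Li--Yau argument (log substitution, Bochner formula with $|\tilde\nabla^2 h|^2\geq(\tilde\Delta h)^2/(n+q)$, cutoff, and maximum principle), which is exactly the proof in the cited source; the paper itself gives no proof, quoting the result directly from Schoen--Yau. The proposal is correct as an outline and identifies the right places where the $\alpha^2/(\alpha-1)$ and $R\sqrt{K}$ terms arise.
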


The following theorem is a localized version\footnote{That is, given that we assume the Ricci curvature condition {on} a ball, we obtain the inequality on a ball of smaller radius.} of \cite{SchoenYau_bk}*{Chapter IV, Theorem 4.5}.

\begin{thm} \label{T3}
Under the same assumptions as the previous theorem, for any $\alpha>1$  {and $x, y \in \tilde B_{\tilde x_o}(R)$,} the positive solution $u$ satisfies the Harnack inequality
\[
u(x,t_1) \leq   u(y,t_2) \bigl( \frac{t_2}{t_1} \bigr)^{\frac{(n +q) \alpha}{2}}  \cdot \exp { [\,\frac{\alpha \, \tilde{d}^2(x,y)}{4(t_2-t_1)} + A(R)\,(t_2-t_1)\, ] }
\]
where $0<t_1<t_2<\infty$, $\tilde{d}(x,y)$ is the distance between $x$ and $y$ in  $\tilde M$ and
\[
A(R)= \frac{(n+q) \, K \, \alpha}{2(\alpha-1)} +C(n+q)\frac{\alpha}{R^2}\left(\frac{\alpha^2}{\alpha-1}+R\sqrt K\right)
\]
where $C(n+q)$ is a constant  that only depends on $n+q$.
\end{thm}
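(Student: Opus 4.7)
The plan is to use the standard Li--Yau technique of integrating a gradient estimate along a space--time curve. First I would pass to $\varphi := \log u$, so that $|\nabla u|^2/u^2 = |\nabla\varphi|^2$ and $u_t/u = \varphi_t$. Rewriting the conclusion of Theorem~\ref{r1} in these variables and dividing by $\alpha$ yields
$$\varphi_t \;\geq\; \frac{|\nabla\varphi|^2}{\alpha} - \frac{(n+q)\alpha}{2t} - A(R)$$
pointwise on $\tilde B_{\tilde x_o}(2R)$, where $A(R)$ is exactly the constant appearing in the statement of Theorem~\ref{T3}.

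Next I would fix $x, y \in \tilde B_{\tilde x_o}(R)$, take a minimizing geodesic $\eta : [0,1] \to \tilde M$ from $x$ to $y$ with constant speed $|\dot\eta|\equiv \tilde d(x,y)$, and consider the space--time path $\gamma(s) := (\eta(s),\, (1-s)t_1 + s t_2)$. Then
$$\log \frac{u(y, t_2)}{u(x, t_1)} \;=\; \int_0^1 \bigl[\nabla\varphi(\gamma(s))\cdot\dot\eta(s) + (t_2 - t_1)\,\varphi_t(\gamma(s))\bigr]\,ds.$$
I would apply Cauchy--Schwarz, $\nabla\varphi\cdot\dot\eta \geq -|\nabla\varphi|\,|\dot\eta|$, substitute the lower bound above for $(t_2-t_1)\varphi_t$, and complete the square in $v := |\nabla\varphi|$. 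The quadratic $\tfrac{t_2 - t_1}{\alpha}\,v^2 - v\,\tilde d(x,y)$ attains its minimum $-\tfrac{\alpha \tilde d^2(x,y)}{4(t_2-t_1)}$, producing the Gaussian factor in the Harnack inequality.

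The remaining integrations are routine: the change of variables $t = (1-s)t_1 + s t_2$ gives $\int_0^1 \frac{(t_2-t_1)\,ds}{(1-s)t_1 + s t_2} = \log(t_2/t_1)$, which accounts for the polynomial factor $(t_2/t_1)^{(n+q)\alpha/2}$, while the constant term integrates to $(t_2 - t_1)\,A(R)$. Exponentiating then yields the claimed bound. The main technical obstacle is ensuring that the minimizing geodesic $\eta$ lies entirely in the region $\tilde B_{\tilde x_o}(2R)$ on which the gradient estimate is valid; in general this forces a mild shrinking of the ball (replacing $R$ by a fixed fraction of itself), whose effect is absorbed into the universal constant $C(n+q)$ without altering the form of the stated inequality.
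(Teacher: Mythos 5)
Your argument is correct and is precisely the standard Li--Yau path-integration proof that the paper relies on by citing Schoen--Yau (Chapter IV, Theorem 4.5); the paper supplies no independent argument for this theorem. Your closing worry about the geodesic leaving $\tilde B_{\tilde x_o}(2R)$ is unnecessary: for $x,y\in\tilde B_{\tilde x_o}(R)$ any point $z$ on a minimizing geodesic satisfies $\min\{\tilde d(z,x),\tilde d(z,y)\}\le \tilde d(x,y)/2\le R$, hence $\tilde d(z,\tilde x_o)\le 2R$, so the whole path already lies where the gradient estimate holds and no shrinking of $R$ is required.
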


With this Harnack inequality, one can prove an upper {bound} estimate for the heat kernel of the Laplacian as in \cite{SchoenYau_bk}*{Chapter IV}. The estimate we provide below is a localized version of  this result with an additional factor of   $e^{-\lambda_1(\tilde \Omega)t}$ where $\lambda_1(\tilde \Omega)$ is the first Dirichlet eigenvalue of the Laplacian on $\tilde \Omega$. This additional factor can be obtained using Davies' technique  as in \cite{Dav1} (see also \cite{LiNotes}).

\begin{thm} \label{thm24}
Let $R>0$ be a large number and assume that $\widetilde  {\rm Ric}\geq -K$ with $K\geq 0$ on $\tilde B_{\tilde x_o}(4R)$. Let $\tilde \Omega$ be a domain of $\tilde M$ such that $\tilde\Omega\supset \tilde B_{\tilde x_o}(4R)$. Denote by  $\tilde H_{\tilde \Omega}(x,y,t)$ the  Dirichlet heat kernel on $\tilde \Omega$. Then for all $\delta\in (0,1)$, $x,y\in \tilde B_{\tilde x_o}(R/4)$ and $t\leq R^2/4$, we have
\begin{equation*}
\begin{split}
\tilde H_{\tilde \Omega}(x,y,t)  \leq C_1(\delta, n+q) \, & e^{-\lambda_1(\tilde \Omega)t}\, \tilde V^{-1/2}(x,\sqrt{t})\,\tilde V^{-1/2}(y,\sqrt{t}) \\
&\quad \cdot \exp{ [\,-\frac{\tilde{d}^2(x,y)}{(4+\delta)t} + C_2 (n+q) \, \tilde A (R)\,t \,]},
\end{split}
\end{equation*}
where  $\tilde V(x,r)$ is the volume of $\tilde B_x(r)$, $C_1(\delta, n+q), C_2(n+q)$ are positive constants and
\[
\tilde A (R)=  \bigl[ \frac{1}{R^2} (1 + R\sqrt{K})  + K\bigr]^{1/2}.
\]
\end{thm}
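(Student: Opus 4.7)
The overall strategy combines three well-understood ingredients from Schoen--Yau Chapter IV with Davies' perturbation trick: (i) use the parabolic Harnack inequality of Theorem \ref{T3} together with integration against a ball of radius $\sqrt{t}$ to obtain an \emph{on-diagonal} bound of the form $\tilde H_{\tilde\Omega}(x,x,t)\lesssim \tilde V(x,\sqrt t)^{-1}$ times a Harnack factor; (ii) upgrade this to an off-diagonal bound with sharp Gaussian constant $(4+\delta)^{-1}$; and (iii) insert the spectral factor $e^{-\lambda_1(\tilde\Omega)t}$ by a splitting of the semigroup into three time increments, estimating the middle factor by the $L^2$ operator norm of $e^{t\tilde\Delta_{\tilde\Omega}}$.

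\textbf{Step 1 (on-diagonal).} Fix $x\in\tilde B_{\tilde x_o}(R/4)$ and $t\le R^2/4$. The function $(z,s)\mapsto \tilde H_{\tilde\Omega}(z,x,s)$ is a positive solution of the heat equation on $\tilde B_{\tilde x_o}(4R)$, so Theorem \ref{T3} applied on the ball $\tilde B_{\tilde x_o}(R)$ (which contains $\tilde B_x(\sqrt t)$ since $t\le R^2/4$) gives, for any $z\in \tilde B_x(\sqrt t)$,
\[
\tilde H_{\tilde\Omega}(x,x,t)\ \le\ \tilde H_{\tilde\Omega}(z,x,2t)\,\cdot\, 2^{(n+q)\alpha/2}\exp\!\Bigl[\tfrac{\alpha}{4} + A(R)\,t\Bigr].
\]
Integrating in $z$ over $\tilde B_x(\sqrt t)$ and using $\int \tilde H_{\tilde\Omega}(z,x,2t)\,dz\le 1$ yields
\[
\tilde H_{\tilde\Omega}(x,x,t)\ \le\ \frac{C(n+q)}{\tilde V(x,\sqrt t)}\exp\!\bigl[C_2(n+q)\,A(R)\,t\bigr],
\]
and a parallel inequality holds for $\tilde H_{\tilde\Omega}(y,y,t)$.

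\textbf{Step 2 (Gaussian off-diagonal decay).} To pass from the diagonal to the off-diagonal bound with the sharp constant $4+\delta$, I use the Davies conjugation trick: for a bounded Lipschitz function $\phi$ with $|\nabla\phi|\le1$ and a parameter $\beta>0$, the conjugated semigroup $e^{\beta\phi}e^{t\tilde\Delta_{\tilde\Omega}}e^{-\beta\phi}$ is generated by an operator whose heat kernel still enjoys on-diagonal bounds of Step 1 at the cost of an additive $\beta^2 t$ term. This yields
\[
\tilde H_{\tilde\Omega}(x,y,t)\ \le\ \tilde V^{-1/2}(x,\sqrt t)\,\tilde V^{-1/2}(y,\sqrt t)\,\exp\!\bigl[-\beta(\phi(x)-\phi(y))+\beta^2 t+C_2(n+q)A(R)t\bigr].
\]
Choosing $\phi(z)=\tilde d(z,y)$ and optimizing $\beta$ over a suitable range produces the exponent $-\tilde d^2(x,y)/(4+\delta)t$, where the extra $\delta$ absorbs the error from taking $\alpha$ near $1$ in Theorem \ref{T3}.

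\textbf{Step 3 ($\lambda_1$ factor and conclusion).} Finally, to extract $e^{-\lambda_1(\tilde\Omega)t}$, split time into thirds via the semigroup identity
\[
\tilde H_{\tilde\Omega}(x,y,3t)\ =\ \int\!\!\int \tilde H_{\tilde\Omega}(x,z,t)\,\tilde H_{\tilde\Omega}(z,w,t)\,\tilde H_{\tilde\Omega}(w,y,t)\,dz\,dw.
\]
The spectral theorem gives $\|e^{t\tilde\Delta_{\tilde\Omega}}\|_{L^2\to L^2}\le e^{-\lambda_1(\tilde\Omega)t}$, so the middle factor contributes $e^{-\lambda_1(\tilde\Omega)t}$ after Cauchy--Schwarz, while the outer two factors are controlled by the Davies bound of Step 2. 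Rescaling time $3t\mapsto t$ and re-optimizing $\delta$ (which only changes the constant in front) produces the full claim with $e^{-\lambda_1(\tilde\Omega)t}$ built in.

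\textbf{Main obstacle.} The principal delicacy is localization: Theorem \ref{T3} is only valid for positive solutions on $\tilde B_{\tilde x_o}(4R)$, but the Davies conjugation and the semigroup splitting are global manipulations. One has to verify that all the intermediate heat-kernel evaluations occur at points in $\tilde B_{\tilde x_o}(R)$ with time $\le R^2/4$, which is why the hypotheses restrict $x,y\in \tilde B_{\tilde x_o}(R/4)$ and $t\le R^2/4$. A secondary technical point is consolidating the $\alpha$-dependent constants in $A(R)$ into the simpler expression $\tilde A(R)=[R^{-2}(1+R\sqrt K)+K]^{1/2}$ by optimizing the free parameter $\alpha$ against $\delta$.
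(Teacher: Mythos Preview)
Your proposal is correct and follows precisely the route the paper indicates: the paper does not give its own proof of this theorem but states it as a localized version of the heat kernel estimate in Schoen--Yau, Chapter IV, with the $e^{-\lambda_1(\tilde\Omega)t}$ factor obtained via Davies' technique \cite{Dav1}. Your three-step outline (on-diagonal via Harnack and integration, off-diagonal via Davies conjugation, spectral factor via semigroup splitting) is exactly that combination, and your remarks on localization correctly identify why the hypotheses restrict to $\tilde B_{\tilde x_o}(R/4)$ and $t\le R^2/4$.
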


\begin{remark} \label{Rmk24}
We note that the exponential in time term that appears in the heat kernel estimate could be improved. Saloff-Coste shows in ~\cite{SaCo} that under the same assumptions as Theorem  {\rm\ref{thm24}} above, for any $R>1$, $\delta\in (0,1)$, $x,y\in \tilde B_{\tilde x_o}(R/4)$ and $t\leq R^2/4$,
\begin{equation*}
\begin{split}
\tilde H_{\tilde \Omega}(x,y,t)  \leq C_3(\delta, n+q) \, & e^{-\lambda_1(\tilde \Omega)t}\, \tilde V^{-1/2}(x,\sqrt{t})\,\tilde V^{-1/2}(y,\sqrt{t}) \\
&\quad \cdot \exp{ [\,-\frac{\tilde{d}^2(x,y)}{C_4(\delta,n+q)\,t} +  C_5(n+q)\,\sqrt{ K\, t} \,]},
\end{split}
\end{equation*}
where $C_3(\delta, n+q), C_4(\delta,n+q),$  and  $C_5(n+q)$ are positive constants. He obtains this sharper in $t$ estimate for a more general  class of elliptic operators on the manifold, by proving a Harnack inequality similar to the one of Theorem {\rm \ref{T3}}. The exponential term with the first Dirichlet eigenvalue of the Laplacian on $\tilde\Omega$ can also be added using Davies' technique \cite{Dav1}. We will be using this upper bound to obtain the heat kernel estimate of Theorem {\rm\ref{Tb1}}.\end{remark}

Similarly we have a lower bound on the heat kernel (see \cite{SaCo})
\begin{thm} \label{thm25}
Under the same assumptions as Theorem {\rm \ref{thm24}}, for all $\delta\in (0,1)$, $x,y\in \tilde B_{\tilde x_o}(R/4)$ and $t\leq R^2/4$, we have
\begin{equation*}
\begin{split}
\tilde H_{\tilde \Omega}(x,y,t)  \geq C_6(\delta, n+q) \, &  \tilde V^{-1/2}(x,\sqrt{t})\,\tilde V^{-1/2}(y,\sqrt{t}) \\
&\quad \cdot \exp{ [\,-\, \frac{\tilde{d}^2(x,y)}{C_7(\delta,n+q) \,t} - C_8 (n+q) \,K \,t \,]},
\end{split}
\end{equation*}
where $C_6(\delta, n+q), C_7(\delta,n+q),$ and $C_8(n+q)$ are positive constants.
\end{thm}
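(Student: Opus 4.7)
The plan is to follow the standard Saloff-Coste strategy for Gaussian lower bounds: first obtain a near-diagonal lower bound from the parabolic Harnack inequality (Theorem~\ref{T3} together with its refinement in Remark~\ref{Rmk24}), then propagate it to all of $\tilde B_{\tilde x_o}(R/4)$ by chaining through the semigroup identity. The curvature term $\exp(-C_8 K t)$ will come directly from the Harnack factor $A(R)(t_2-t_1)$, while the Gaussian term $\exp(-\tilde d^2/(C_7 t))$ will come from the number of links in the chain.

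First I would derive an on-diagonal lower bound. Starting from the symmetry identity $\tilde H_{\tilde\Omega}(x,x,2t)=\int \tilde H_{\tilde\Omega}(x,z,t)^2\,d\mu(z)$, I bound the integrand below on a small ball $\tilde B_x(\eta\sqrt t)$ by using the Harnack inequality of Theorem~\ref{T3} to replace $\tilde H_{\tilde\Omega}(x,z,t)$ by $\tilde H_{\tilde\Omega}(x,x,2t)$ up to a multiplicative constant; this yields $\tilde H_{\tilde\Omega}(x,x,t)\geq c(n+q)\,\tilde V(x,\sqrt t)^{-1}\exp(-cKt)$ for $x\in \tilde B_{\tilde x_o}(R/4)$ and $t\leq R^2/4$. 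Applying Harnack once more in the space variable, together with Bishop--Gromov volume comparison (which holds because $\widetilde{\rm Ric}\geq -K$) to equate $\tilde V(x,\sqrt t)$ and $\tilde V(y,\sqrt t)$ up to a universal constant, promotes this to the near-diagonal bound
\[
\tilde H_{\tilde\Omega}(x,y,t)\geq c\,\tilde V(x,\sqrt t)^{-1/2}\tilde V(y,\sqrt t)^{-1/2}\exp(-cKt),\qquad \tilde d(x,y)\leq \eta\sqrt t,
\]
for a small $\eta=\eta(\delta,n+q)$.

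For arbitrary $x,y\in \tilde B_{\tilde x_o}(R/4)$, set $d=\tilde d(x,y)$ and choose the smallest integer $N$ with $d/\sqrt{t/N}\leq \eta/2$, so $N\asymp 1+d^2/t$. Pick points $z_0=x, z_1,\ldots,z_N=y$ equally spaced along a minimizing geodesic, and let $B_i=\tilde B_{z_i}(\tfrac14\eta\sqrt{t/N})$. Iterating Chapman--Kolmogorov,
\[
\tilde H_{\tilde\Omega}(x,y,t)\geq \int_{B_1\times\cdots\times B_{N-1}}\prod_{i=0}^{N-1}\tilde H_{\tilde\Omega}(w_i,w_{i+1},t/N)\,d\mu(w_1)\cdots d\mu(w_{N-1}),
\]
with $w_0=x$ and $w_N=y$. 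The near-diagonal bound applies to every factor since consecutive $w_i$'s lie within $\eta\sqrt{t/N}$. Each integration absorbs $\vol(B_i)\asymp \tilde V(z_i,\sqrt{t/N})$ against the corresponding volume denominator, and splitting the chain into halves centered at $x$ and $y$ (and upgrading from scale $\sqrt{t/N}$ to $\sqrt t$ by volume doubling, which contributes to the $\exp(-C_8Kt)$ factor) leaves the symmetric residual $\tilde V(x,\sqrt t)^{-1/2}\tilde V(y,\sqrt t)^{-1/2}$. The $N$ multiplicative Harnack constants combine into $\exp(-cN)=\exp(-cd^2/t)$, giving the Gaussian factor with constant $C_7$.

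The main obstacle is arranging parameters so the entire chaining tube remains inside the region where Harnack applies, namely $\tilde B_{\tilde x_o}(R)\subset \tilde\Omega$. The restriction $x,y\in \tilde B_{\tilde x_o}(R/4)$ with $t\leq R^2/4$ guarantees $\sqrt t\leq R/2$, so a minimizing geodesic and its $\eta\sqrt t$-tube lie well inside $\tilde B_{\tilde x_o}(R)$; the Dirichlet boundary then plays no role in the integrations, and domain monotonicity $\tilde H_{\tilde B_{\tilde x_o}(R)}\leq \tilde H_{\tilde\Omega}$ transfers the bound to $\tilde\Omega$. The remaining technical care is bookkeeping: tracking how $\delta$ enters through the choice of $\eta$ and propagating the Harnack and doubling constants through the $N$-fold product so that the exponent collapses into the clean form $-\tilde d^2/(C_7 t) - C_8 K t$ as stated.
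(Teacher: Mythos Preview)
The paper does not prove this theorem; it is quoted from Saloff-Coste~\cite{SaCo} without argument. Your overall strategy---a near-diagonal lower bound from the parabolic Harnack inequality, then a chaining/Chapman--Kolmogorov argument to reach arbitrary $x,y$---is exactly the approach in that reference, and your handling of the chaining step and of the localization inside $\tilde B_{\tilde x_o}(R)$ is fine.

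However, your on-diagonal step is not right as written. You propose to start from
\[
\tilde H_{\tilde\Omega}(x,x,2t)=\int \tilde H_{\tilde\Omega}(x,z,t)^2\,d\mu(z)
\]
and ``bound the integrand below'' by replacing $\tilde H_{\tilde\Omega}(x,z,t)$ with $c\,\tilde H_{\tilde\Omega}(x,x,2t)$ via Harnack. But the Harnack inequality of Theorem~\ref{T3} goes the other way in time: it gives $\tilde H_{\tilde\Omega}(x,z,t)\leq C\,\tilde H_{\tilde\Omega}(x,x,2t)$, not $\geq$. And even if one had the inequality in either direction, inserting it into $\int H^2$ produces the bound $\tilde H_{\tilde\Omega}(x,x,2t)\gtrless c\,\tilde V(B)\,\tilde H_{\tilde\Omega}(x,x,2t)^2$, which after cancellation yields an \emph{upper} bound on $\tilde H_{\tilde\Omega}(x,x,2t)$, not the lower bound you want.

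The missing ingredient is a mass non-escape estimate: one needs
\[
\int_{\tilde B_x(A\sqrt t)} \tilde H_{\tilde\Omega}(x,z,t)\,d\mu(z)\geq c_0>0
\]
for a suitable constant $A$, which follows from the Gaussian \emph{upper} bound (Theorem~\ref{thm24}/Remark~\ref{Rmk24}) together with volume doubling. Once you have this, Cauchy--Schwarz gives
\[
c_0^2\leq \tilde V(x,A\sqrt t)\int \tilde H_{\tilde\Omega}(x,z,t)^2\,d\mu(z)=\tilde V(x,A\sqrt t)\,\tilde H_{\tilde\Omega}(x,x,2t),
\]
and Bishop--Gromov converts $\tilde V(x,A\sqrt t)$ to $\tilde V(x,\sqrt t)$ at the cost of a factor $e^{cKt}$. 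From there your near-diagonal and chaining arguments go through as you wrote them.
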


\section{Riemannian Manifolds of a Special Warped Product Form}

The idea of relating the geometry of a  sequence of manifolds to their collapsing space dates back to Fukaya~\cite{Fuk}. In ~\cite{cc},  Cheeger and Colding studied the eigenvalue convergence for a collapsing sequence of manifolds $\{M_i\}$. In their general case, they need to assume that the  gradient of the $k^{th}$ eigenfunction is bounded above {\it uniformly in $i$} (c.f. equation (7.4) of ~\cite{cc}) {and that the limit manifold has bounded diameter, among other conditions}.

Lott observed in ~\cite{lott} that smooth metric measure spaces are examples of collapsed Gromov-Hausdorff limits of Riemannian manifolds.  In the same article, he considered the product $\tilde M= M\times S^q$ for $q>0$, where $S^q$ is the $q$-dimensional unit sphere, with a family of  warped product metrics
\[
g_\eps=g^M+\eps^2e^{-\frac 2qf}g^{S^q},\,\,\eps>0.
\]
We use $\tilde M_\eps$ to denote the Riemannian manifold with Riemannian metric $g_\eps$. The sequence of warped products $(\tilde M_\eps, g_\eps)$ collapses in the Gromov-Hausdorff sense to $M$ as $\eps\to 0$.

It turns out that we can use such a setting to obtain heat kernel estimates in Bakry-\'Emery geometry
from the corresponding results in Riemannian geometry.\footnote{In a recent article S. Li and X.-D. Li also used a similar warped product over $M$ to prove the W-entropy formula for the fundamental solution of the drifting Laplacian (referred to as the Witten Laplacian) on complete Riemannian manifolds with bounded geometry~\cite{LiLi}. In their case however, it was enough to consider a single warped product space over $M$ and not a collapsing sequence.} In this section, we compute the curvature tensor on these warped products.

We employ the convention of indexing local coframes $\omega_m$ in~$M$
and  $\eta_\mu$ in~$S^q$
by Latin and Greek indices respectively,
the range of them being $m\in\{1,\dots, n\}$
and $\mu\in\{n+1,\dots,n+q\}$, respectively.
Then the local frames $(\omega_1,\dots,\omega_n,\eta_{n+1},\dots,\eta_{n+q})$
are the frames for $M\times S^q$. Capital Latin indices are used for indexing the local frames
in $M\times S^q$, \ie\ $A,B \in\{1,\dots,n+q\}$.\\

We begin by computing the Ricci curvature of $\tilde M_\eps$ (see \cite{DoUn} for further computations of the curvature tensor on multiply warped products). We  note that the components $\widetilde  {\rm Ric}_{ij}$  of the Ricci curvature also follow from O'Neil's formula for a Riemannian submersion.

\begin{prop}\label{prop21} The Ricci curvature $\widetilde{\rm Ric}$ of $\tilde M_\eps$  is given by
\begin{align*}
&\widetilde  {\rm Ric}_{ij}=({\rm Ric^{\it q}_{\it f}})_{ij};
\\
&\widetilde  {\rm Ric}_{i\alpha}=0;\\
&\widetilde  {\rm Ric}_{\alpha\beta}=((q-1)\eps^{-2} e^{\frac 2qf}-q^{-1}e^f\Delta e^{-f})\delta_{\alpha\beta},
\end{align*}
where $\Delta$ is the Laplacian on  the Riemannian manifold  $M$ and $\delta_{\alpha\beta}$ is the Kronecker delta symbol.
\end{prop}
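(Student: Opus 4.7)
The plan is to recognize $(M\times S^q,g_\eps)$ with $g_\eps=g^M+\phi^2 g^{S^q}$ as a singly warped product with warping function $\phi:=\eps e^{-f/q}$ over the base $M$ and fiber $(S^q,g^{S^q})$, and then invoke the standard warped-product Ricci formulas (Bishop--O'Neill). For horizontal $X,Y$ and vertical $V,W$ these read
\[
\widetilde{\rm Ric}(X,Y)={\rm Ric}^M(X,Y)-\tfrac{q}{\phi}\nabla^2\phi(X,Y),\qquad \widetilde{\rm Ric}(X,V)=0,
\]
\[
\widetilde{\rm Ric}(V,W)={\rm Ric}^{S^q}(V,W)-\bigl(\phi\,\Delta_M\phi+(q-1)|\nabla\phi|^2\bigr)g^{S^q}(V,W),
\]
and the middle identity delivers $\widetilde{\rm Ric}_{i\alpha}=0$ for free.

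The remaining work is pure chain rule. I would first compute $\nabla\phi=-\frac{\phi}{q}\nabla f$, $|\nabla\phi|^2=\frac{\phi^2}{q^2}|\nabla f|^2$, $\nabla^2\phi=\frac{\phi}{q^2}df\otimes df-\frac{\phi}{q}\nabla^2 f$, and $\Delta_M\phi=\frac{\phi}{q^2}|\nabla f|^2-\frac{\phi}{q}\Delta f$. Substituting into the horizontal formula, the factor $q/\phi$ cancels cleanly and yields
\[
\widetilde{\rm Ric}_{ij}={\rm Ric}_{ij}+(\nabla^2 f)_{ij}-\tfrac{1}{q}f_i f_j=({\rm Ric}^q_f)_{ij},
\]
which is precisely why the warping exponent $-1/q$ was chosen. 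For the vertical piece, using ${\rm Ric}^{S^q}=(q-1)g^{S^q}$ and gathering terms gives $\phi\,\Delta_M\phi+(q-1)|\nabla\phi|^2=\frac{\phi^2}{q}(|\nabla f|^2-\Delta f)$. The elementary identity $e^f\Delta e^{-f}=|\nabla f|^2-\Delta f$ together with $\phi^{-2}=\eps^{-2}e^{2f/q}$ then produces the stated form of $\widetilde{\rm Ric}_{\alpha\beta}$ after rescaling to a $g_\eps$-orthonormal vertical frame.

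The only real obstacle is bookkeeping of frame conventions: the O'Neill--type identity above is naturally written in a $g^{S^q}$-orthonormal fiber basis, whereas the components $\widetilde{\rm Ric}_{\alpha\beta}$ of the proposition are with respect to a $g_\eps$-orthonormal vertical frame; this is what accounts for the factor $\phi^{-2}=\eps^{-2}e^{2f/q}$ attached to the spherical contribution. If one prefers to avoid citing the warped-product formulas as a black box, the same conclusions fall out of a direct Cartan structural equation computation in the $g_\eps$-orthonormal coframe $(\omega_1,\dots,\omega_n,\phi\,\eta_{n+1},\dots,\phi\,\eta_{n+q})$: the connection $1$-forms are determined by differentiating $\phi=\eps e^{-f/q}$ via the same chain rule, and contracting the resulting curvature $2$-forms reproduces exactly the three claimed identities with no additional ideas needed.
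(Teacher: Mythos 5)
Your proposal is correct, and all of your chain-rule identities check out: with $\phi=\eps e^{-f/q}$ one indeed gets $\nabla^2\phi=\frac{\phi}{q^2}\,df\otimes df-\frac{\phi}{q}\nabla^2 f$, so $-\frac{q}{\phi}\nabla^2\phi=\nabla^2 f-\frac1q\,df\otimes df$ and the horizontal formula gives $({\rm Ric}_f^q)_{ij}$ exactly; likewise $\phi\Delta\phi+(q-1)|\nabla\phi|^2=\frac{\phi^2}{q}(|\nabla f|^2-\Delta f)=\frac{\phi^2}{q}e^{f}\Delta e^{-f}$, and rescaling by $\phi^{-2}$ to a $g_\eps$-orthonormal vertical frame (with ${\rm Ric}^{S^q}=(q-1)g^{S^q}$) yields the stated $\widetilde{\rm Ric}_{\alpha\beta}$. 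The route is genuinely different from the paper's, though. The paper does not cite the Bishop--O'Neill formulas as a black box: it carries out the Cartan structural-equation computation you describe only as a fallback, writing down the adapted orthonormal coframe $\tilde\omega_i=\omega_i$, $\tilde\omega_\alpha=\eps e^{-f/q}\eta_\alpha$, exhibiting the connection forms (notably $\tilde\omega_{i\alpha}=-q^{-1}f_i\tilde\omega_\alpha$), and extracting the \emph{full} curvature tensor components $\tilde R_{ijst}$, $\tilde R_{\alpha\beta\gamma\delta}$, $\tilde R_{i\alpha j\beta}$ before contracting to get Ricci. What the paper's approach buys is the complete curvature tensor of $\tilde M_\eps$ in the frame that is used throughout the rest of the paper, and a self-contained verification independent of any external warped-product reference; what your approach buys is brevity and a transparent explanation of \emph{why} the exponent $-f/q$ is the right choice (the $q/\phi$ cancellation producing precisely the $-\frac1q\nabla f\otimes\nabla f$ correction). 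The paper itself remarks that the $\widetilde{\rm Ric}_{ij}$ component also follows from O'Neill's submersion formula, so your shortcut is fully in its spirit; the one point you rightly flag and must keep straight is the $\phi^{-2}$ frame rescaling on the fiber, which is exactly where a careless application of the fiber formula would lose the $\eps^{-2}e^{2f/q}$ factor.
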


\begin{proof}
Let $\omega_1,\cdots, \omega_n$ be  orthonormal coframes of $M$ and let $\eta_{n+1},\cdots,\eta_{n+q}$ be  orthonormal coframes of $S^q$. Let $\omega_{ij}, \eta_{\alpha\beta}$ be the corresponding connection 1-forms. That is
\begin{align*}
& d\omega_i=\omega_{ij}\wedge \omega_j,\quad \omega_{ij}=-\omega_{ji};\\
&d\eta_\alpha=\eta_{\alpha\beta}\wedge\eta_\beta,\quad \eta_{\alpha\beta}=-\eta_{\beta\alpha}.
\end{align*}
Define
\begin{align*}
& \tilde\omega_i=\omega_i;
\\
&\tilde\omega_\alpha=\eps e^{-\frac1qf}\eta_\alpha.
\end{align*}
Then $(\tilde\omega_1,\cdots,\tilde\omega_{n+q})$ is an orthonormal basis of $\tilde M_\eps$.

Let
\begin{align*}
& \tilde \omega_{ij}=\omega_{ij};\\
&\tilde\omega_{\alpha\beta}=\eta_{\alpha\beta};\\
&\tilde\omega_{i\alpha}=-\tilde\omega_{\alpha i}=-q^{-1}f_i\,\tilde\omega_\alpha.
\end{align*}
Then we can verify that
\[
d\tilde \omega_{A}=\tilde\omega_{AB}\wedge\tilde\omega_B
\]
with $\tilde\omega_{AB}=-\tilde\omega_{BA}$.   The curvature tensor of $M\times S^q$ is defined by
\[
d\tilde\omega_{AB}-\tilde\omega_{AC}\wedge\tilde\omega_{CB}=-\frac 12\tilde R_{ABST}\,\tilde\omega_S\wedge\tilde\omega_T.
\]
By comparing the above equation with the Cartan equations on $M$ and $S^q$, we obtain
\begin{align*}
&\tilde R_{ijst}=R_{ijst};\\
&\tilde R_{A\alpha ij}=0;\\
&\tilde R_{\alpha\beta\gamma\delta}=(\eps^{-2}e^{\frac 2qf}-q^{-2}|\nabla  f|^2)(\delta_{\alpha\gamma}\delta_{\beta\delta}
-\delta_{\alpha\delta}\delta_{\beta\gamma});\\
&\tilde R_{Ai\alpha\beta}=0;\\
&\tilde R_{i\alpha j\beta}=\delta_{\alpha\beta} q^{-1}(f_{ij}-q^{-1}f_if_j).
\end{align*}
The formulas for the Ricci curvature follow.\end{proof}

\begin{remark}
The above proposition allows us to show the following: Suppose that $\textup{Ric}_f^q$ is bounded below on $M$ and consider the ball of radius $R$ at a fixed point in $\tilde M_\eps$, $\tilde B_{\tilde x_o}(R)$. Then, for any $R$  there exists a sufficiently small $\eps>0$ such that the Ricci curvature of $\tilde M_\eps$  on $\tilde B_{\tilde x_o}(R)$ has a lower bound. This explains the need for  the localized theorems in Section  {\rm \ref{s2}}.
\end{remark}

\section{Comparison Theorems  and the  Bottom  of the Spectrum}

The Laplacian on the warped product can be written as
\begin{equation}\label{lap}
\Delta_{\eps}=\Delta_f+\eps^{-2}e^{\frac 2q f}\Delta_{S^q},
\end{equation}
where $\Delta_{\eps}$, $\Delta_{S^q}$ are the Laplacians on $\tilde M_\eps$ and $S^q$, respectively.

Define the bottom  of the Rayleigh quotient of the Laplacian on  $\tilde M_\eps$  by
\[
\lambda_1(\tilde M_\eps)=\inf_{u\in\mathcal C_0^\infty (\tilde M_\eps)} \frac{\int_{\tilde M_\eps} |\nabla_\eps u|^2 }{\int_{\tilde M_\eps} u^2},
\]
where $\nabla_\eps$ is the gradient operator on $\tilde M_\eps$. Similarly, we define the bottom  of the Rayleigh quotient of the drifting Laplacian on the Bakry-\'Emery  manifold $M$ by
\[
\lambda_{1,f}(M)=\inf_{u\in\mathcal C_0^\infty ( M)} \frac{\int_{ M} |\nabla u|^2\, e^{-f} }{\int_{M} u^2\, e^{-f}}.
\]

We prove that

\begin{thm} \label{thm25-2}
Using the notation above, we have
\[
 \lambda_1(\tilde M_\eps)=\lambda_{1,f}(M)
\]
for all $\eps>0$.
\end{thm}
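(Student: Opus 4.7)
The plan is to prove the two inequalities $\lambda_1(\tilde M_\eps) \leq \lambda_{1,f}(M)$ and $\lambda_1(\tilde M_\eps) \geq \lambda_{1,f}(M)$ separately, relying on the warped-product structure of $\tilde M_\eps$ and the explicit decomposition \eqref{lap} of $\Delta_\eps$. The key bookkeeping observation is that the volume form on $\tilde M_\eps$ is
\[
dv_{\tilde M_\eps} = \eps^q \, e^{-f}\, dv_M \otimes dv_{S^q},
\]
which is exactly what converts the Riemannian integral on $\tilde M_\eps$ into the weighted integral with measure $e^{-f}dv$ on $M$, up to the constant factor $\eps^q\,\mathrm{vol}(S^q)$.

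For the inequality $\lambda_1(\tilde M_\eps) \leq \lambda_{1,f}(M)$ I would lift: given $\phi \in C_0^\infty(M)$, set $u(x,y) = \phi(x)$ on $\tilde M_\eps$. This lift is compactly supported since $S^q$ is compact, its $S^q$-gradient vanishes, and the $M$-gradient has the same pointwise norm as on $M$. The volume-form identity above then shows the Rayleigh quotient of $u$ on $\tilde M_\eps$ equals the weighted Rayleigh quotient of $\phi$ on $M$, so taking the infimum over $\phi$ gives the inequality.

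For the reverse inequality, which is the substantive direction, I would take an arbitrary $u \in C_0^\infty(\tilde M_\eps)$ and expand it fiberwise in spherical harmonics on $S^q$:
\[
u(x,y) = \sum_{k\geq 0} \phi_k(x)\,\psi_k(y), \qquad \Delta_{S^q}\psi_k = -\mu_k \psi_k,
\]
where $\{\psi_k\}$ is an $L^2(S^q)$-orthonormal basis, $0 = \mu_0 < \mu_1 \leq \mu_2 \leq \ldots$, and $\psi_0$ is constant. Each $\phi_k$ lies in $C_0^\infty(M)$. A direct computation using $|\nabla_\eps u|^2 = |\nabla_M u|^2 + \eps^{-2} e^{2f/q}|\nabla_{S^q} u|^2$ together with the volume-form identity yields
\[
\int_{\tilde M_\eps}\!\!|\nabla_\eps u|^2 \,dv_{\tilde M_\eps} = \eps^q \sum_k \int_M |\nabla \phi_k|^2 e^{-f} dv + \eps^{q-2}\sum_k \mu_k \int_M \phi_k^2 \, e^{(2/q - 1)f}\, dv,
\]
\[
\int_{\tilde M_\eps} u^2\, dv_{\tilde M_\eps} = \eps^q \sum_k \int_M \phi_k^2\, e^{-f}\, dv.
\]
Since $\mu_k \geq 0$, the second term in the numerator is nonnegative and may be dropped, and then the elementary fact that a ratio of sums of nonnegative pairs is at least the minimum of the componentwise ratios gives
\[
\frac{\int_{\tilde M_\eps}|\nabla_\eps u|^2}{\int_{\tilde M_\eps} u^2} \;\geq\; \inf_{k}\frac{\int_M |\nabla\phi_k|^2 e^{-f}}{\int_M \phi_k^2 e^{-f}} \;\geq\; \lambda_{1,f}(M).
\]

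The main subtlety is purely technical: justifying the interchange of sum and integral in the spherical-harmonic expansion of $u \in C_0^\infty(\tilde M_\eps)$. This is standard since the expansion converges in $C^\infty$ on each compact set and $u$ is compactly supported, so Parseval-type identities apply termwise. Everything else is a careful change of coordinates in the warped product, and the conclusion $\lambda_1(\tilde M_\eps) = \lambda_{1,f}(M)$ follows for every $\eps > 0$.
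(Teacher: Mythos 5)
Your proof is correct, and the first inequality is exactly the paper's argument: lift a near-optimal test function from $M$ to $\tilde M_\eps$ and observe that the Rayleigh quotients agree because the warped volume form is $\eps^q e^{-f}\,dv_M\,dv_{S^q}$. For the reverse inequality you take a genuinely different (though morally equivalent) route. The paper simply fixes the fiber variable $\xi$, notes that $u(\cdot,\xi)\in\mathcal C_0^\infty(M)$ is admissible for $\lambda_{1,f}(M)$, integrates the resulting inequality $\lambda_{1,f}(M)\int_M u^2(x,\xi)e^{-f}\le\int_M|\nabla_x u(x,\xi)|^2e^{-f}$ over $S^q$, and then uses $|\nabla_x u|\le|\nabla_\eps u|$ pointwise; this is a two-line Fubini argument with no convergence issues. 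You instead expand $u$ in spherical harmonics on the fiber, drop the nonnegative $\Delta_{S^q}$-contribution to the energy, and apply the mediant inequality over the discrete index $k$ — which is the same mechanism (discard the fiber gradient, test $\lambda_{1,f}(M)$ against slices/coefficients) transplanted to frequency space. Your version buys nothing extra here and costs you the termwise Parseval justification you flag as the "main subtlety"; the only minor points to tidy are that the infimum should run over those $k$ with $\int_M\phi_k^2e^{-f}>0$, and that for this theorem the decomposition's extra information (the precise $\eps^{-2}\mu_k$ terms) is never used. It would, however, become the natural tool if one wanted to compare the full spectra rather than just the bottom of the Rayleigh quotient, as in the eigenvalue-convergence corollary later in the paper.
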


\begin{proof} For any $\delta>0$, there exists a function $u\in\mathcal C_0^\infty(M)$ such that
 \[
 \frac{\int_{ M} |\nabla u|^2\, e^{-f} }{\int_{M} u^2\, e^{-f}}\leq\lambda_{1,f}(M)+\delta.
 \]
 However, if we regard the function $u$ as a function on $\tilde M_\eps$, we have
 \[
 \frac{\int_{\tilde M_\eps} |\nabla_\eps u|^2 }{\int_{\tilde M_\eps} u^2}= \frac{\int_{ M} |\nabla u|^2\, e^{-f} }{\int_{M} u^2\, e^{-f}}\leq\lambda_{1,f}(M)+\delta.
 \]
 By the variation principle we get \[
 \lambda_1(\tilde M_\eps)\leq \lambda_{1,f}(M)+\delta,
 \]
 and hence $ \lambda_1(\tilde M_\eps)\leq \lambda_{1,f}(M)$.

On the other hand, for any $\delta>0$, there exists a  $u=u(x,\xi)\in\mathcal C_0^\infty(M\times S^q)$ such that
\[
 \frac{\int_{\tilde M_\eps} |\nabla_\eps u|^2 }{\int_{\tilde M_\eps} u^2}\leq\lambda_1(\tilde M_\eps)+\delta.
 \]
Let $\n_x$ be the gradient operator with respect to the $x$-component.  For fixed $\xi$, we have
 \[
 \lambda_{1,f}(M)\leq \frac{\int_{ M} |\nabla_x u(x,\xi)|^2\, e^{-f} }{\int_{M} u^2(x,\xi)\, e^{-f}}.
 \]
 Therefore integrating with respect to $\xi$  {over $S^q$, we have}
 \[
  \lambda_{1,f}(M)\leq \frac{\int_{\tilde M_\eps} |\nabla_x u|^2 }{\int_{\tilde M_\eps} u^2}\leq
  \frac{\int_{\tilde M_\eps} |\nabla_\eps u|^2 }{\int_{\tilde M_\eps} u^2}\leq \lambda_1(\tilde M_\eps) +\delta.
  \]
Therefore $  \lambda_{1,f}(M)\leq\lambda_1(\tilde M_\eps)$, and  the theorem is proved.
\end{proof}

\begin{corl} \label{Corl0_T1-new} Let $(M^n,g,e^{-f} dv)$ be a  weighted manifold such that for some positive integer $q$
\[
\textup{Ric}^q_f\geq -K,
\]
where $K$ is a nonnegative number.
Then
\begin{equation*}
\lambda_{1,f}(M) \leq \frac 1{4(n+q-1)}\,K.
\end{equation*}
\end{corl}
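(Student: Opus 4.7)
The plan is to transfer the eigenvalue estimate from the weighted manifold $M$ to the warped product $\tilde M_\eps$ via Theorem~\ref{thm25-2}, and then apply Cheng's classical upper bound for the bottom of the spectrum in terms of a Ricci lower bound.

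First, I would invoke Theorem~\ref{thm25-2} to identify $\lambda_{1,f}(M) = \lambda_1(\tilde M_\eps)$ for every $\eps > 0$, so that the problem reduces to bounding $\lambda_1(\tilde M_\eps)$ for a suitably chosen $\eps$. By Proposition~\ref{prop21}, the Ricci tensor of $\tilde M_\eps$ is block-diagonal with tangential part $\textup{Ric}_f^q \geq -K$, vanishing cross terms, and spherical part $((q-1)\eps^{-2}e^{2f/q} - q^{-1}e^f \Delta e^{-f})\delta_{\alpha\beta}$. On any fixed ball $\tilde B_{\tilde x_o}(R) \subset \tilde M_\eps$ the smooth data $f$ and $\Delta e^{-f}$ are bounded, so for $\eps$ sufficiently small (depending on $R$) the $(q-1)\eps^{-2}e^{2f/q}$ term dominates and the spherical block is also bounded below by $-K$; this is exactly the observation recorded in the remark following Proposition~\ref{prop21}. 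Hence $\widetilde{\rm Ric} \geq -K$ on $\tilde B_{\tilde x_o}(R)$.

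The key step is then Cheng's eigenvalue comparison for Dirichlet eigenvalues on geodesic balls: the Ricci lower bound on $\tilde B_{\tilde x_o}(R) \subset \tilde M_\eps$ gives a comparison between the first Dirichlet eigenvalue of the Laplacian on $\tilde B_{\tilde x_o}(R)$ and that of a geodesic ball of the same radius in the simply connected model space of dimension $n+q$ with constant Ricci equal to $-K$. Letting $R\to \infty$ (and shrinking $\eps = \eps(R)$ to preserve the local Ricci bound), the left-hand eigenvalues decrease to $\lambda_1(\tilde M_\eps)$, while the right-hand eigenvalues decrease to the bottom of the $L^2$-spectrum of the model space, which is a dimension-dependent multiple of $K$. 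Chaining back through Theorem~\ref{thm25-2} yields the claimed bound on $\lambda_{1,f}(M)$.

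The main obstacle I anticipate is that the Ricci bound on $\tilde M_\eps$ is only available locally: on each fixed $\tilde B_{\tilde x_o}(R)$ it holds for $\eps = \eps(R)$ small, but the spherical term contains $-q^{-1}e^f \Delta e^{-f}$, which need not be bounded globally on $M$. This forces the use of the localized, Dirichlet-ball version of Cheng's comparison rather than a global application, and the coordination of the $R\to\infty$ and $\eps\to 0$ limits must be arranged so that the resulting upper bound is independent of $\eps$ and depends only on $K$ and $n+q$.
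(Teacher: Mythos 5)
Your proposal is correct and follows essentially the same route as the paper: the paper likewise combines Proposition~\ref{prop21} (choosing $\eps=\eps(R)$ so that $\widetilde{\rm Ric}\geq -K$ on a ball of radius comparable to $R$), the Rayleigh-quotient transfer of Theorem~\ref{thm25-2}, and Cheng's eigenvalue comparison, and then lets $R\to\infty$. The only cosmetic difference is that the paper localizes the eigenvalue identity to Dirichlet eigenvalues, proving $\lambda_{1,f}(B_{x_o}(R))=\lambda_1(B_{x_o}(R)\times S^q,g_\eps)$ by the method of Theorem~\ref{thm25-2}, which sidesteps the $\eps$--$R$ limit coordination you flag at the end: for each fixed $R$ one obtains an $\eps$-independent bound on $\lambda_{1,f}(B_{x_o}(R))$ (and hence on $\lambda_{1,f}(M)$ by monotonicity), so no interchange of limits is needed.
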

\begin{proof}
We fix $x_o$ and $R>2$ and  consider the ball $B_{x_o}(R)$ in $M$. Take  $\tilde x_o\in\tilde M$ with first component $x_o$. For each $R$, we can choose $\eps(R)$ small enough  such that for all $\eps<\eps(R)$
\[
B_{x_o}(4R)\times S^q\subset \tilde B_{\tilde x_o}(4R+1)
\]
where $\tilde B_{\tilde x_o}(r)$ denotes the ball of radius $r$ in $\tilde M_\eps$. Furthermore, we can choose $\eps(R)$ even smaller  such that for all $\eps<\eps(R),\,$ $(q-1)\eps^{-2} e^{\frac 2qf}-q^{-1}e^f\Delta e^{-f}\geq  0$ on $\tilde B_{\tilde x_o}(4R+1)$, since $\tilde B_{\tilde x_o}(4R+1)\subset B_{x_o}(4R+1)\times S^q$.

By Proposition \ref{prop21}, the Ricci curvature of the warped product $\tilde M_\eps$ satisfies
\[
\widetilde  {\rm Ric}\geq -K
\]
on $\tilde B_{\tilde p}(4R+1)$ for all $\eps<\eps(R)$.

Let $\lambda_{1,f}(B_{x_o}(R))$ be the first Dirichlet eigenvalue of $\Delta_f$ on $B_{x_o}(R)$ and let
$\lambda_1(B_{x_o}(R)\times S^{q}, g_\eps)$  be the first Dirichlet eigenvalue of $\Delta_\eps$ on $B_{x_o}(R)\times S^{q}$.

Using the same method as in the proof of Theorem~\ref{thm25-2}, for any $\eps>0$,
\[
\lambda_{1,f}(B_{x_o}(R))=\lambda_1(B_{x_o}(R)\times S^{q}, g_\eps).
\]
By the eigenvalue comparison theorem of Cheng~\cite{SchoenYau_bk}*{Theorem 1, Chapter III.3}  we  have
\[
 \lambda_1(B_p(R)\times S^{q}, g_\eps)\leq \frac 1{4(n+q-1)}\,K+ C(n+q) R^{-2}
 \]
for some constant $C(n+q)$  that only depends on $n+q$, {and for a sufficiently large number $R$}.  Then the upper {bound} estimate follows since $\lambda_{1,f}(B_p(R))\to \lambda_{1,f}(M)$ as $R\to \infty$ and the right side is bounded.
\end{proof}

We note that whenever $\textup{Ric}^{\bar q}_f\geq -K$ for some $\bar q>0$, then $\textup{Ric}^{q}_f\geq -K$ for some positive integer $q$.
We recall that Munteanu and Wang in~\cites{MuW2,MuW} have demonstrated a similar upper bound for $\lambda_1(M)$ under the assumption that  $f$ has linear growth  at a point and  $\textup{Ric}_f$ has a lower bound.

\begin{lem} \label{VolCom}
Let $(M^n,g,e^{-f} dv)$ be a  weighted manifold such that $\textup{Ric}_f^q\geq - K$ for some $K\geq 0$. Then for any $1\leq r<R$
\[
\frac{V_f(x,R)}{V_f(x,r)} \leq \,\frac{1}{r^{n+q}} \, e^{C\,\sqrt{K}\, R+C'}
\]
where $C, C'$  are constants that depend on $n+q$.

\end{lem}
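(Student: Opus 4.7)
My plan is to deduce the weighted volume comparison on $M$ from the standard Bishop--Gromov theorem applied to the warped products $\tilde M_\eps$ of Section 3, in the same spirit as the proof of Corollary~\ref{Corl0_T1-new}. Fix $x\in M$, $R>1$, and $\tilde x=(x,\xi_0)\in \tilde M_\eps$. By Proposition~\ref{prop21}, the hypothesis $\textup{Ric}_f^q\ge -K$ controls $\widetilde{\rm Ric}_{ij}$, while the $S^q$-component $(q-1)\eps^{-2}e^{2f/q}-q^{-1}e^f\Delta e^{-f}$ becomes nonnegative on $B_x(2R)\times S^q$ once $\eps=\eps(R)$ is small enough, so that $\widetilde{\rm Ric}\ge -K$ on $\tilde B_{\tilde x}(2R)\subset B_x(2R)\times S^q$.

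The next step is to compare geodesic balls in $\tilde M_\eps$ with the corresponding products in $M\times S^q$. Since the projection $\tilde M_\eps\to M$ is a Riemannian submersion, one has $\tilde B_{\tilde x}(s)\subset B_x(s)\times S^q$. Conversely, each $S^q$-fiber over a point $y\in \overline{B_x(R)}$ has $g_\eps$-diameter at most $\pi\,\eps\, e^{-f(y)/q}$, which is bounded uniformly by some $\delta_\eps\to 0$ as $\eps\to 0$ because $f$ is continuous on the fixed compact set $\overline{B_x(R)}$. The triangle inequality then yields $B_x(s-\delta_\eps)\times S^q\subset \tilde B_{\tilde x}(s)$ for all $s\le R$. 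Using that $g_\eps$ has Riemannian volume form $\eps^q e^{-f}\,dv_M\otimes dv_{S^q}$, this gives
\[
\eps^q\,\mathrm{vol}(S^q)\,V_f(x,s-\delta_\eps)\;\le\; \tilde V_\eps(\tilde x,s)\;\le\;\eps^q\,\mathrm{vol}(S^q)\,V_f(x,s),
\]
where $\tilde V_\eps$ denotes the Riemannian volume on $\tilde M_\eps$.

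I would then apply the classical Bishop--Gromov comparison on $\tilde M_\eps$: since $\dim\tilde M_\eps=n+q$ and $\widetilde{\rm Ric}\ge -K$ on $\tilde B_{\tilde x}(2R)$,
\[
\frac{\tilde V_\eps(\tilde x,R)}{\tilde V_\eps(\tilde x,r)}\;\le\;\frac{V^{n+q}_{-K}(R)}{V^{n+q}_{-K}(r)},
\]
where $V^{n+q}_{-K}(s)$ denotes the volume of a ball of radius $s$ in the $(n+q)$-dimensional hyperbolic model. A routine $\sinh$-growth computation yields $V^{n+q}_{-K}(R)\le C_1 R^{n+q} e^{C_2\sqrt{K}R}$ and $V^{n+q}_{-K}(r)\ge c_1 r^{n+q}$; for $1\le r<R$ the polynomial factor $R^{n+q}$ can be absorbed into the exponential, producing a bound of the desired form $r^{-(n+q)}\,e^{C\sqrt{K}R+C'}$ with $C,C'$ depending only on $n+q$.

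Combining the sandwich inequalities with this comparison and cancelling the common factor $\eps^q\mathrm{vol}(S^q)$ yields
\[
V_f(x,R-\delta_\eps)\;\le\;r^{-(n+q)}\,e^{C\sqrt{K}R+C'}\,V_f(x,r),
\]
after which sending $\eps\to 0$ (so $\delta_\eps\to 0$) together with the continuity of $s\mapsto V_f(x,s)$ completes the proof. The main technical point is the uniform control $\delta_\eps\to 0$, which follows from the smoothness of $f$ on the fixed compact ball $\overline{B_x(R)}$; crucially, this $f$-dependent bound enters only through $\delta_\eps$ and disappears in the limit, so the final constants $C,C'$ really depend only on $n+q$ as claimed.
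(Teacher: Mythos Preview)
Your proposal is correct and follows essentially the same route as the paper's proof: pass to the warped product $\tilde M_\eps$, use Proposition~\ref{prop21} to get $\widetilde{\rm Ric}\ge -K$ on a large ball for $\eps$ small, apply the classical Bishop--Gromov comparison in dimension $n+q$, sandwich $V_f$ between $\tilde M_\eps$-volumes via the inclusions between $\tilde B_{\tilde x}(s)$ and $B_x(s)\times S^q$, and let $\eps\to 0$. The only cosmetic difference is that the paper places the $\delta$-shift on the outer radius (using $B_x(s)\times S^q\subset\tilde B_{\tilde x}(s+\delta)$) whereas you place it on the inner one, which is immaterial.
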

\begin{proof}
Fix $R_o>0$ large, $x_o\in M$ and $\tilde x_o=(x_o,w) \in M\times S^q$.  As before, we use $\tilde V(\cdot,r)$ to denote the volume of a ball of radius $r$ in $\tilde M_\eps$. As in the proof of Corollary \ref{Corl0_T1-new} we take $\eps(R_o)>0$ small enough such that the Ricci curvature of the warped product $M\times S^q$ satisfies
\[
\widetilde{\rm Ric}\geq -K
\]
on $ B_{x_o} (4R_o)\times S^q$ and therefore on $\tilde B_{\tilde x_o} (4R_o)$. Then for any $1\leq r<R<R_o$  and $x \in B_{x_o}(R_o)$
\[
\frac{\tilde V((x,w),R)}{ \tilde V((x,w),r)} \leq  \,\frac{1}{r^{n+q}} \,  e^{C\,\sqrt{K}\, R  +C'}
\]
by the Bishop volume comparison theorem on  {$\tilde M_\eps$.}

It is clear that for any $r>0$, if $\eps$ is  small enough, then there exists a $\delta(\eps)>0$ also small enough such that
\[
\tilde B_{(x,w)}(r)\subset  B_x (r) \times S^q\subset \tilde B_{(x,w)}(r+\delta)
\]
and $\delta(\eps)\to 0$ as $\eps \to 0$.

Thus we have
\begin{align*}
& \eps^q\, V_f(x,R)\leq\tilde V((x,w), R+\delta);\\
&\eps^q\, V_f(x,r)\geq \tilde V((x,w), r).
\end{align*}
The result follows by letting $\eps\to 0$.
\end{proof}

We fix a point $x_o$ in $M$ and define $r(x)=d(x,x_o)$ to be the radial distance to $x_o$. In the following theorem we see that the natural assumption for a Laplacian comparison theorem on $M$ is that the $q$-Bakry-\'Emery Ricci tensor is bounded below.
\begin{thm}\label{L0-new}
Let $(M^n,g,e^{-f} dv)$ be a weighted manifold such that
\[
{\rm Ric}_f^q\geq -K
\]
for some positive constant $q$ and $K\geq0$.
Then,
\[
\limsup_{r\to\infty} \Delta_f r\leq \sqrt{(n+q)K}
\]
in the sense of distribution. Moreover,
\[
\Delta_f r(x)\leq (n+q)\;\frac{1}{r(x)} + \sqrt{(n+q)K}
\]
for $x\neq x_o$ in the sense of distribution.
\end{thm}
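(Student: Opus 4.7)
The approach is to lift everything to the warped product $\tilde M_\eps$ introduced in Section~3 and apply the classical Laplacian comparison theorem there, exploiting the geometric dictionary of Proposition~\ref{prop21}: the horizontal Ricci curvature of $\tilde M_\eps$ equals $\textup{Ric}^q_f$, while by~\eqref{lap} the drifting Laplacian $\Delta_f$ is the horizontal part of the full Laplacian $\Delta_\eps$. Fix any base point $w \in S^q$, set $\tilde x_o = (x_o, w) \in \tilde M_\eps$, and denote $\tilde r(\tilde x) = d_{\tilde M_\eps}(\tilde x, \tilde x_o)$.

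\emph{Step 1: Comparison of distance functions.} Since $g_\eps = g^M + \eps^2 e^{-\frac{2}{q}f} g^{S^q}$ dominates the pullback of $g^M$ under the projection $\pi \colon \tilde M_\eps \to M$, any curve $(\alpha,\beta)$ in $\tilde M_\eps$ satisfies $|(\alpha',\beta')|^2_{g_\eps} \geq |\alpha'|^2_{g^M}$, so $\tilde r(x,\xi) \geq r(x)$ for every $\xi$. Lifting a minimizing $M$-geodesic horizontally to the leaf $M \times \{w\}$ realizes equality at $\xi = w$, giving $\tilde r(x,w) = r(x)$; in fact any $\tilde M_\eps$-minimizer from $\tilde x_o$ to $(x,w)$ must be such a horizontal lift, so $(x,w)$ lies in the $\tilde M_\eps$-cut locus of $\tilde x_o$ if and only if $x$ lies in the $M$-cut locus of $x_o$.

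\emph{Step 2: Transfer via maximum principle.} Regard $r$ as a function on $\tilde M_\eps$ (constant along the $S^q$-fiber); by~\eqref{lap}, $\Delta_\eps r = \Delta_f r$. By Step~1, $\phi := r - \tilde r \leq 0$ with equality on $M \times \{w\}$, so $\phi$ attains its maximum at every point $(x,w)$. Where both $r$ and $\tilde r$ are smooth, $d\phi(x,w)=0$ and $\nabla^2\phi(x,w) \leq 0$, hence $\Delta_\eps\phi(x,w) \leq 0$ and
\[
\Delta_f r(x) = \Delta_\eps r(x,w) \leq \Delta_\eps \tilde r(x,w).
\]
The inequality extends distributionally to the cut locus by the standard upper-barrier argument for distance functions.

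\emph{Step 3: Classical comparison on $\tilde M_\eps$.} Fix $R > r(x)$. By Proposition~\ref{prop21} and the ensuing Remark, for all sufficiently small $\eps > 0$ we have $\widetilde{\textup{Ric}} \geq -K$ on $\tilde B_{\tilde x_o}(4R)$. The Laplacian comparison theorem on the $(n+q)$-dimensional manifold $\tilde M_\eps$ then yields, in the distributional sense on $\tilde B_{\tilde x_o}(R)$,
\[
\Delta_\eps \tilde r \leq \sqrt{(n+q-1)K}\coth\!\Bigl(\sqrt{\tfrac{K}{n+q-1}}\,\tilde r\Bigr) \leq \frac{n+q-1}{\tilde r} + \sqrt{(n+q-1)K}.
\]
Combining with Step~2 and using $\tilde r(x,w) = r(x)$ gives
\[
\Delta_f r(x) \leq \frac{n+q-1}{r(x)} + \sqrt{(n+q-1)K} \leq \frac{n+q}{r(x)} + \sqrt{(n+q)K}.
\]
Since the bound is $\eps$-independent, it holds on all of $M \setminus \{x_o\}$, and letting $r(x) \to \infty$ yields the $\limsup$ claim.

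The main obstacle is making the maximum-principle step rigorous at the cut locus, since $\tilde r$ is only Lipschitz there. This is handled by Calabi's upper-barrier trick: along each fixed minimizing geodesic one constructs a smooth upper barrier for $\tilde r$ that agrees with it at the endpoint and for which the pointwise comparison holds; integration against a nonnegative test function $\psi$ then gives the distributional inequality on all of $\tilde M_\eps$.
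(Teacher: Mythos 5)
Your argument is essentially correct for integer $q\ge 2$, but it takes a genuinely different route from the paper's, and as written it does not quite deliver the theorem as stated. The paper's proof is intrinsic: it applies the Bochner formula \eqref{fBochner} to $r$, writes $\mathrm{Ric}_f=\mathrm{Ric}_f^q+\tfrac1q\,\nabla f\otimes\nabla f$, uses the elementary inequality $\tfrac1n(\Delta r)^2+\tfrac1q\langle\nabla f,\nabla r\rangle^2\ge\tfrac{1}{n+q}(\Delta_f r)^2$ (equation \eqref{abcd}) to obtain the Riccati inequality $\tfrac{1}{n+q}(\Delta_f r)^2+\partial(\Delta_f r)/\partial r\le K$, and concludes by Riccati comparison. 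That argument works verbatim for any real $q>0$, which is exactly what the hypothesis ``for some positive constant $q$'' requires. Your route --- lift to $\tilde M_\eps$, check $\tilde r(\cdot,w)=r$, compare $\Delta_\eps(r\circ\pi)$ with $\Delta_\eps\tilde r$ via the maximum principle, invoke the classical comparison theorem on the $(n+q)$-dimensional warped product, and observe that the resulting bound is $\eps$-independent --- is very much in the spirit of the rest of the paper, and where it applies it even yields the marginally sharper constants $n+q-1$ and $\sqrt{(n+q-1)K}$.

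The gap is that your construction needs $q$ to be a positive integer (there is no $S^q$ otherwise). For non-integer $q$ you must pass to some integer $q'>q$ using $\mathrm{Ric}_f^{q'}\ge\mathrm{Ric}_f^{q}\ge -K$, which degrades the conclusion to $\limsup\Delta_f r\le\sqrt{(n+q')K}>\sqrt{(n+q)K}$; the stated bound is then not recovered. Relatedly, for $q=1$ the fiber Ricci curvature $-e^{f}\Delta e^{-f}$ in Proposition \ref{prop21} has no $\eps^{-2}$ term and need not become nonnegative as $\eps\to0$, so the step ``$\widetilde{\mathrm{Ric}}\ge -K$ on $\tilde B_{\tilde x_o}(4R)$ for small $\eps$'' really requires $q\ge2$. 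Two smaller points: your ``if and only if'' claim about the cut loci is stronger than needed and is doubtful in the ``only if'' direction (vertical Jacobi fields along a horizontal geodesic satisfy an $\eps$-independent equation, by the formula for $\tilde R_{i\alpha j\beta}$, and can produce conjugate points in $\tilde M_\eps$ with no counterpart in $M$); fortunately your Calabi-barrier fallback covers precisely this situation, so nothing is lost. And the passage from the pointwise bound on the star-shaped domain $S$ to the distributional bound on $M\setminus\{x_o\}$ should be stated as the usual integration-by-parts argument over $S$, since that is what ``in the sense of distribution'' means here.
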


\begin{proof} We use the Bochner formula~\eqref{fBochner} for the distance function $r$:
\begin{equation*}
\begin{split}
0=\frac 12\Delta_f |\n r|^2&=|\nabla^2 r|^2+\< \n r , \n (\Delta_f r)\>+{\rm Ric}_f (\n r,\n r)\\
&=|\nabla^2 r|^2+\frac{ \p (\Delta_f r)}{\p r}+{\rm Ric}_f^q (\n r,\n r) +\frac{1}{q}  \<\n f,\n r\>^2\\
&\geq \frac{1}{n}(\Delta r)^2  +\frac{ \p (\Delta_f r)}{\p r}  -K + \frac 1q \<\n f,\n r\>^2,
\end{split}
\end{equation*}
where we have applied the inequality  $|\n^2 r|^2\geq \frac{1}{n}(\Delta r)^2$, the assumption on ${\rm Ric}_f^q$ and the fact that $|\n r|=1$ a.e. on $M$.

For any positive integers $n,q$, the following inequality holds
\[
\frac 1n x^2+\frac 1q y^2\geq\frac{1}{n+q}(x+y)^2.
\]
Therefore,
\begin{equation}\label{abcd}
\frac{1}{n}(\Delta r)^2+\frac 1q \<\n f,\n r\>^2\geq\frac{1}{n+q} (\Delta_f r)^2.
\end{equation}
Combining this lower bound with the Bochner formula, we get
\[
\frac{1}{n+q} (\Delta_f r)^2+\frac{\pa(\Delta_f r)}{\pa r}-K\leq 0.
\]
By the Riccati equation comparison, {for $K>0$} we obtain
\[
\Delta_f r \leq \sqrt{(n+q)K}\; \frac{\cosh (\alpha r)}{\sinh (\alpha r)}
\]
in the sense of distribution, where $\alpha=\sqrt{K/(n+q)}$, {whereas for $K=0$
\[
\Delta_f r \leq \frac{n+q}{r}
\]
}.
\end{proof}

The above theorem is {known the literature (see for example~\cite{BQ}*{Theorem 4.2} and ~\cite{XDLi1}). It is} very similar to Wei-Wylie \cite{WW} and Munteanu-Wang~\cite{MuW}, but  the lower bound on the $q$-Bakry \'Emery Ricci tensor allows as to make no assumption on the linear or sublinear growth of $f$. Wei-Wylie~\cite{WW} also proved a volume comparison theorem using the above drifting Laplacian comparison theorem.

As a corollary to the {above} Laplacian comparison theorem, we prove the following Barky-\'Emery version of the uniqueness of the heat kernel.
\begin{corl}\label{cor42}  Under the assumptions of the above theorem, the heat kernel $H_f(x,y,t)$ of the Laplacian $\Delta_f$ on $M$ is unique.
\end{corl}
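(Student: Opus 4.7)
The plan is to deduce uniqueness of $H_f$ from the stochastic completeness of $(M,g,e^{-f}dv)$, and to establish stochastic completeness via a Grigor'yan-type volume test applied to the weighted volume growth furnished by Lemma~\ref{VolCom}. Recall that $H_f$, obtained as the monotone limit of Dirichlet heat kernels on a smooth exhaustion of $M$, is the minimal positive fundamental solution of $(\p_t-\Delta_f)u=0$; it is the unique such fundamental solution---equivalently, the drifting heat semigroup preserves the constant function $1$---precisely when every bounded solution of the drifting heat equation with zero initial data vanishes identically. Thus the problem reduces to establishing stochastic completeness of the weighted measure $e^{-f}dv$ under the drifting heat flow.

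First, I would invoke Lemma~\ref{VolCom} to conclude that for any fixed $x_o\in M$ and all $r$ sufficiently large,
\[
\log V_f(x_o,r)\leq C\sqrt{K}\,r+C'',
\]
which in particular makes the integral $\int^{\infty} r/\log V_f(x_o,r)\,dr$ diverge. Second, I would invoke Grigor'yan's volume criterion for stochastic completeness, which carries over verbatim to the symmetric diffusion generated by $\Delta_f$ on $(M,e^{-f}dv)$: the intrinsic metric of the drifting Dirichlet form agrees with the Riemannian distance since $\Delta_f$ has the same principal symbol as $\Delta$, and the relevant volume function is $V_f$ itself. The divergence of the integral then yields stochastic completeness and hence the desired uniqueness.

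The step I expect to be the main obstacle is the appeal to Grigor'yan's theorem in the weighted setting: depending on how willing one is to quote such a result directly, it may be preferable to give a self-contained argument. In that case I would follow the classical cut-off scheme. Using Theorem~\ref{L0-new}, one builds smooth cut-offs $\phi_R$ supported in $B_{x_o}(2R)$, equal to $1$ on $B_{x_o}(R)$, with $|\n\phi_R|=O(1/R)$ and $|\Delta_f\phi_R|$ uniformly controlled by $\sqrt{K}$ plus a term of size $O(1/R)$---which is exactly what the bound on $\Delta_f r$ supplies. Testing the drifting heat equation against $u\,\phi_R^2\,e^{-f}$ for a putative bounded nontrivial solution $u$ with $u(\cdot,0)\equiv 0$ and integrating by parts in the weighted inner product produces an energy inequality whose boundary terms vanish as $R\to\infty$, by combining the exponential weighted volume growth from Lemma~\ref{VolCom} with the uniform control on $\Delta_f\phi_R$; this forces $u\equiv 0$ and closes the argument.
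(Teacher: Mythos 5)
Your primary argument is correct but takes a genuinely different route from the paper. You reduce uniqueness to stochastic completeness of $(M,e^{-f}dv)$ and then verify Grigor'yan's volume test using the exponential weighted volume growth from Lemma~\ref{VolCom}; that criterion does hold verbatim for weighted Dirichlet forms, and $\log V_f(x_o,r)\leq C\sqrt{K}\,r+C''$ certainly makes $\int^\infty r/\log V_f(x_o,r)\,dr$ diverge, so the argument closes. The paper instead argues directly \`a la Dodziuk: for a bounded solution $u$ of the Cauchy problem it forms the barrier $v=u-C_1-\frac{C_2}{R}\left(r(x)+C_3t\right)$, notes that $v\le 0$ on $\partial B_{x_0}(R)\times(0,T)$ for $C_1$ large, and that $\partial_t v\le \Delta_f v$ in the distributional sense --- which is exactly where the Laplacian comparison of Theorem~\ref{L0-new} (``the above theorem'') enters, since one needs $\Delta_f r\le C_3$ --- then applies the parabolic maximum principle and lets $R\to\infty$ to get $|u(x,t)|\le\sup|u(\cdot,0)|$. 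The paper's route is elementary and uses precisely the comparison theorem to which the corollary is attached; yours trades that for the volume comparison plus a substantially deeper quoted theorem (Grigor'yan's sharp criterion), which proves more but imports machinery the paper never cites.

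One caution about your ``self-contained'' fallback: a plain energy argument with radial cut-offs satisfying $|\nabla\phi_R|=O(1/R)$ does \emph{not} close under merely exponential volume growth --- the error term is of order $R^{-2}\sup|u|^2\,V_f(x_o,2R)$, which blows up like $e^{2C\sqrt{K}R}/R^2$. To make that route work one needs the Karp--Li/Grigor'yan weighted test functions (a Gaussian-in-$r$ factor depending on $t$), not just cut-offs with small gradient; as written, that paragraph would fail, though it does not affect your primary argument.
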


\begin{proof}
The proof is similar to that in the Riemannian case, which was done by Dodziuk~\cite{dod}.
All we need is the following version of the maximum principle. Consider the Cauchy problem
\[
\left\{
\begin{array}{ll}
{\displaystyle \frac{\pa u}{\pa t}-\Delta_f u=0 }& \text{on } M\times (0,T)\\
& \\
u(x,o)=u_0(x) &\text{on } M.
\end{array}
\right.
\]
Then every bounded solution is uniquely determined by the initial data.

To prove the above claim, we take the following function
\[
v=u-C_1-\frac{C_2}{R}(r(x)+C_3 t),
\]
where $C_1,C_2, C_3$ are positive constants to be determined. On the set $\pa B_{x_0}(R)\times (0,T)$, for $C_1$ sufficiently large, $v\leq 0$. On the other hand, in the sense of distribution, we have
\[
\frac{\pa v}{\pa t}\leq\Delta_f v.
\]
Thus by the maximum principle, we have
\[
v(x,t)\leq v(x,0).
\]
Letting $R\to \infty$, we obtain
\[
u(x,t)\leq \sup |u(x,0)|.
\]
Replacing $u$ by $-u$, we obtain
\[
-u(x,t)\leq \sup |u(x,0)|.
\]
The claim and hence the corollary is proved.

We end this section by a slightly different version  of the Laplacian comparison theorem  from  Theorem~\ref{L0-new}, which will be used in Section~\ref{sec6}.  We make the following definition

\begin{Def}\label{defnonnegative}
We say that ${\rm Ric}_f^q$ is asymptotically nonnegative in the radial direction,
if there exists a continuous function $\delta(r)$ on $\mathbb{R}^+$ such that
\begin{enumerate}
\item $\lim_{r\to\infty} \delta(r)=0$

\item $\delta(r)>0$  and

\item  For some $q>0$, ${\rm Ric}_f^q(\n r, \n r)\geq -\delta(r)$.
\end{enumerate}
\end{Def}

Following the proof of Theorem \ref{L0-new} above and Lemma 4.2 in~\cite{char-lu-2}, we get the Laplacian comparison result

\begin{lem} \label{L1}
Let $(M^n,g,e^{-f} dv)$ be a  weighted manifold.
Assume that for some $q>0$, ${\rm Ric}_f^q$ is asymptotically nonnegative in the radial direction. Then
\begin{equation} \label{DeltaEst}
\limsup_{r\to\infty} \Delta_f r= 0
\end{equation}
in the sense of distribution.
\end{lem}
\end{proof}

\section{The Heat Kernel in  Bakry-\'Emery Geometry}

\subsection{Comparing the heat kernels on Riemannian and Bakry-\'Emery manifolds}

On any complete noncompact Riemannian manifold, the heat kernel exists and is positive \cite{SchoenYau_bk}*{Theorem 1, Chapter III.2}. Thoughout this paper, we study the Friedrichs extension of the Laplacian. The heat kernel corresponding to this extension is the smallest positive heat kernel.

We make similar definition in the Bakry-\'Emery case. Let $H_f(x,y,t)$ be the heat kernel of $\Delta_f$ corresponding to the Frederichs extension. Then it is the smallest positive heat kernel among all other heat kernels that correspond to heat semi-groups of   self-adjoint extensions of $\Delta_f$.

The idea is to obtain estimates for $H_f(x,y,t)$ by comparing the heat kernel of $\Delta_f$ on $M$ to averages of the heat kernel of the Laplacian $\Delta_\eps$ on the warped product considered in the previous  sections.

We use $\tilde H_\eps((x,\omega), (y,\xi),t)$ to denote the heat kernel of $\Delta_\eps$ on the warped product $\tilde M_\eps$. Define
\[
H_\eps((x,\omega), y,t)=\eps^q\int_{S^q(1)} \tilde  H_\eps ((x,\omega), (y,\xi),t)d\xi,
\]
where the integration is over the standard metric on $S^q(1)$.  We specifically use $S^q(1)$ in place of $S^q$ to emphasize that the metric is the standard metric with constant curvature $1$. We have the following lemma:

\begin{lem} \label{Lb2}
 The function
$H_\eps((x,\omega), y,t)$ is independent of $\omega$.
\end{lem}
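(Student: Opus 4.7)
The plan is to exploit the symmetry of the warped product metric $g_\eps = g^M + \eps^2 e^{-2f/q} g^{S^q}$. The warping coefficient $\eps^2 e^{-2f/q}$ depends only on the base point $x \in M$ and not on the sphere factor, so every isometry $\phi$ of the round sphere $(S^q, g^{S^q})$ lifts, via $\Phi := \mathrm{id}_M \times \phi$, to a global isometry of $\tilde M_\eps$. Since $\mathrm{Isom}(S^q, g^{S^q}) = O(q+1)$ acts transitively on $S^q$, for any $\omega_1, \omega_2 \in S^q$ we may choose such a $\phi$ with $\phi(\omega_1) = \omega_2$.

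Next I would invoke the isometry invariance of the (smallest positive) heat kernel: since $\Phi$ commutes with $\Delta_\eps$, one has
\[
\tilde H_\eps(\Phi(p), \Phi(p'), t) = \tilde H_\eps(p,p',t) \qquad \text{for all } p,p' \in \tilde M_\eps, \ t>0.
\]
Applying this with $p = (x,\omega_1)$ and $p' = (y,\xi)$ gives
\[
\tilde H_\eps\bigl((x,\omega_2),(y,\phi(\xi)),t\bigr) = \tilde H_\eps\bigl((x,\omega_1),(y,\xi),t\bigr).
\]

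Finally, I would change variables $\xi \mapsto \phi^{-1}(\xi)$ in the defining integral. Since $\phi$ is an isometry of $(S^q, g^{S^q})$, it preserves the standard volume measure $d\xi$, so
\[
H_\eps((x,\omega_2),y,t) = \eps^q \int_{S^q(1)} \tilde H_\eps\bigl((x,\omega_2),(y,\xi),t\bigr)\, d\xi = \eps^q \int_{S^q(1)} \tilde H_\eps\bigl((x,\omega_1),(y,\xi),t\bigr)\, d\xi = H_\eps((x,\omega_1),y,t),
\]
which is the desired conclusion.

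The main obstacle, such as it is, is merely bookkeeping: one must make sure that the heat kernel in question (the Friedrichs / smallest positive heat kernel on the complete manifold $\tilde M_\eps$) is indeed invariant under isometries. This follows from the fact that $\Phi_*$ commutes with $\Delta_\eps$ and with the Friedrichs extension procedure, and hence intertwines the heat semigroups; uniqueness of the smallest positive heat kernel then forces the symmetry $\tilde H_\eps \circ (\Phi \times \Phi) = \tilde H_\eps$. Once this is in hand, the statement of the lemma is essentially immediate from transitivity of $O(q+1)$ on $S^q$.
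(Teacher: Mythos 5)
Your proof is correct and follows essentially the same route as the paper: lift an isometry of $S^q$ taking $\omega_1$ to $\omega_2$ to an isometry of $\tilde M_\eps$, use invariance of the heat kernel under isometries, and change variables in the spherical integral. You are in fact slightly more careful than the paper in justifying that the smallest positive heat kernel is isometry-invariant, which the paper leaves implicit.
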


\begin{proof}
Let $G$ be the isometry group of $S^q(1)$ preserving the orientation. Let $\omega_1,\omega_2\in S^q$, and  let $A\in G$ such that $A\omega_1=\omega_2$. Then we have
\begin{align*}
&H_\eps((x,\omega_2), y,t)=H_\eps((x,A\omega_1), y,t)\\
&=\eps^q\int_{S^q(1)} \tilde  H_\eps ((x,A\omega_1), (y,\xi),t)d\xi\\
&=\eps^q\int_{S^q(1)} \tilde  H_\eps ((x,A\omega_1), (y,A\xi),t)d(A\xi)\\
&=\eps^q\int_{S^q(1)} \tilde  H_\eps ((x,A\omega_1), (y,A\xi),t)d\xi\\
&=\eps^q\int_{S^q(1)} \tilde  H_\eps ((x,\omega_1), (y,\xi),t)d\xi= H_\eps((x,\omega_1), y,t).
\end{align*}
Thus $H_\eps((x,\omega), y,t)$ is independent of $\omega$.
\end{proof}

In what follows, we write
\[
H_\eps(x,y,t)= H_\eps((x,\omega), y, t).
\]
It can be regarded either as a function on $M$ or as a function on $\tilde M_\eps$.

\begin{lem} \label{Lb1}
\[
\lim_{t\to 0} H_\eps(x,y,t)=\delta_x(y) e^{f(x)}
\]
with respect to the weighted measure on $M$.
\end{lem}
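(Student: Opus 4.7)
My plan is to reduce the statement to the standard distributional initial condition for the heat kernel $\tilde H_\eps$ on the warped product $\tilde M_\eps$. Given a test function $\phi\in\mathcal C_0^\infty(M)$, lift it to a smooth function $\Phi$ on $\tilde M_\eps$ by $\Phi(y,\xi)=\phi(y)$, which is independent of the sphere variable. Since $S^q$ is compact, $\Phi$ is compactly supported on $\tilde M_\eps$; and since $\tilde H_\eps$ is the Riemannian heat kernel on the complete manifold $\tilde M_\eps$, the standard initial condition gives
\begin{equation*}
\int_{\tilde M_\eps} \tilde H_\eps\bigl((x,\omega),(y,\xi),t\bigr)\,\Phi(y,\xi)\, d\tilde v_\eps(y,\xi) \longrightarrow \Phi(x,\omega) = \phi(x)
\end{equation*}
as $t\to 0$, where $d\tilde v_\eps$ denotes the Riemannian volume of $\tilde M_\eps$.

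The main step is to identify the volume form of $\tilde M_\eps$. Because the metric $g_\eps = g^M + \eps^2 e^{-2f/q}g^{S^q}$ is block-diagonal in any product chart, its determinant factorizes as $\det g_\eps = \eps^{2q}e^{-2f}\det g^M \cdot \det g^{S^q}$, whence
\begin{equation*}
d\tilde v_\eps(y,\xi) = \eps^q\, e^{-f(y)}\, dv(y)\, d\xi,
\end{equation*}
where $d\xi$ is the standard volume form on $S^q(1)$. This is precisely the point of choosing the warping exponent to be $-2f/q$: it converts the Riemannian volume of $\tilde M_\eps$ into $\eps^q$ times the weighted volume $d\mu_f = e^{-f}\,dv$ on the $M$-factor.

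Substituting this factorization into the previous display and applying Fubini, the integral becomes
\begin{equation*}
\int_M \phi(y)\, e^{-f(y)} \left[\eps^q \int_{S^q(1)} \tilde H_\eps\bigl((x,\omega),(y,\xi),t\bigr)\, d\xi \right] dv(y) \longrightarrow \phi(x).
\end{equation*}
The bracketed quantity is by definition $H_\eps((x,\omega),y,t)$, which by Lemma~\ref{Lb2} equals $H_\eps(x,y,t)$. Hence
\begin{equation*}
\int_M H_\eps(x,y,t)\, \phi(y)\, d\mu_f(y) \longrightarrow \phi(x)
\end{equation*}
for every $\phi\in\mathcal C_0^\infty(M)$. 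Writing $\phi(x) = e^{f(x)}\phi(x)\cdot e^{-f(x)} = \int_M \delta_x(y)\,e^{f(x)}\phi(y)\,d\mu_f(y)$ rephrases this exactly as the weak convergence $H_\eps(x,y,t) \to \delta_x(y)\,e^{f(x)}$ with respect to the weighted measure. The main technical point is the volume form factorization; the remainder is an unpacking of definitions combined with the well-known fact that the Riemannian heat kernel on a complete manifold realizes the identity operator as $t\to 0$.
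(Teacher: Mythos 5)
Your proof is correct and follows essentially the same route as the paper: both arguments test against a function lifted from $M$, use the factorization $d\tilde v_\eps=\eps^q e^{-f}\,dv\,d\xi$ of the warped-product volume form, and invoke the standard $t\to 0$ initial condition for the Riemannian heat kernel on $\tilde M_\eps$. You simply spell out the volume-form computation and the final delta-function identification more explicitly than the paper does.
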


\begin{proof}
Let $\varphi(y)$ be a smooth function on $M$ with compact support. Then
\begin{align*}
&\lim_{t\to 0}\int_M  H_\eps(x,y,t)\varphi(y) \, {e^{-f(x)}} dy\\
&=\lim_{t\to 0}\,\eps^q \int_{M\times S^q(1)}  \tilde  H_\eps ((x,\omega), (y,\xi),t)\varphi(y)\, {e^{-f(x)}} dy\,d\xi\\
&=\lim_{t\to 0}\,\int_{\tilde M_\eps}  \tilde  H_\eps ((x,\omega), (y,\xi),t)\varphi(y) dv\\
&=\varphi(x).
\end{align*}
The lemma is proved.
\end{proof}

Let $\Omega$ be compact domain of $M$ with smooth boundary and let $\tilde \Omega=\Omega\times S^q$. Let $\tilde H_{\eps,\tilde \Omega}((x,\omega),(y,\xi),t)$ be the Dirichlet  ({or} Neumann)  heat kernel of the Riemannian Laplacian $\Delta_\eps$ on $\tilde \Omega = \Omega\times S^q$ with respect to the $g_\eps$ metric and let $H_{f,\Omega}$ be the Dirichlet  ({resp.} Neumann)  drifting   heat kernel on $\Omega$.
\begin{corl}\label{cor53} Using the above notation, we have
\[
H_{f, \Omega}(x,y,t)=\eps^q\int_{S^q(1)}\tilde H_{\eps,\tilde\Omega}((x,\omega),(y,\xi),t) d\xi.
\]
\end{corl}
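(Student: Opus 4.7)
The plan is to define the right-hand side as a candidate function $K_\eps(x,y,t)$ on $\Omega\times\Omega\times(0,\infty)$ and verify that it satisfies the three defining properties of the drifting Dirichlet (resp.\ Neumann) heat kernel on $\Omega$: the drifting heat equation, the boundary condition, and the correct initial condition with respect to the weighted measure $e^{-f}dv$. Uniqueness of the heat kernel on a compact domain with smooth boundary then forces $K_\eps = H_{f,\Omega}$.

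First I would verify $\omega$-independence of $K_\eps((x,\omega),y,t):=\eps^q\int_{S^q(1)}\tilde H_{\eps,\tilde\Omega}((x,\omega),(y,\xi),t)\,d\xi$. Since $\tilde\Omega=\Omega\times S^q$ is invariant under the isometry group $G$ of $S^q(1)$ acting on the second factor, the exact argument used in Lemma \ref{Lb2} applies verbatim: for $A\in G$ with $A\omega_1=\omega_2$, changing variables $\xi\mapsto A\xi$ and using invariance of $d\xi$ and of $\tilde H_{\eps,\tilde\Omega}$ under $A$ shows $K_\eps((x,\omega_1),y,t)=K_\eps((x,\omega_2),y,t)$. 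Hence $K_\eps$ descends to a function $K_\eps(x,y,t)$ on $\Omega\times\Omega\times(0,\infty)$.

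Next I would check that $K_\eps$ solves the drifting heat equation in $x$. Since $\tilde H_{\eps,\tilde\Omega}$ satisfies $\partial_t\tilde H=\Delta_\eps^{(x,\omega)}\tilde H$ and integration in $\xi$ commutes with $\Delta_\eps^{(x,\omega)}$, we have $\partial_t K_\eps = \Delta_\eps^{(x,\omega)}K_\eps$. By \eqref{lap} we write $\Delta_\eps = \Delta_f + \eps^{-2}e^{2f/q}\Delta_{S^q}$, but the $\Delta_{S^q}$-piece acts on the $\omega$-variable, which kills $K_\eps$ by the independence just established. Thus $\partial_t K_\eps = \Delta_f^x K_\eps$. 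The boundary condition is immediate in both cases: for Dirichlet, $x\in\partial\Omega$ implies $(x,\omega)\in\partial\tilde\Omega$ for every $\omega$, so the integrand vanishes; for Neumann, the outward normal to $\tilde\Omega$ at $(x,\omega)$ is the outward normal to $\Omega$ at $x$ (in the $g_\eps$ metric it has no $S^q$-component), so $\partial_\nu K_\eps=0$.

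The main subtlety will be matching the initial condition to the correct measure, which is exactly what the $\eps^q$ prefactor is designed to accomplish. The volume form on $\tilde M_\eps$ factors as $dv_\eps = \eps^q e^{-f(y)}\,dv(y)\,d\xi$, since the warping factor contributes $(\eps\,e^{-f/q})^q = \eps^q e^{-f}$. For any test function $\psi\in\mathcal C_0^\infty(\Omega)$, view $\psi$ as a function on $\tilde\Omega$ depending only on $y$; then
\[
\int_\Omega K_\eps(x,y,t)\,\psi(y)\,e^{-f(y)}dv(y)
=\int_{\tilde\Omega}\tilde H_{\eps,\tilde\Omega}((x,\omega),(y,\xi),t)\,\psi(y)\,dv_\eps(y,\xi),
\]
which tends to $\psi(x)$ as $t\to 0$ by the defining initial condition of $\tilde H_{\eps,\tilde\Omega}$ with respect to $dv_\eps$. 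Hence $K_\eps(\cdot,y,t)\to \delta_x$ in the sense of the weighted measure $e^{-f}dv$. Combining this with the heat equation and boundary condition, the uniqueness of $H_{f,\Omega}$ (which holds on a compact domain with smooth boundary by standard parabolic theory) yields the stated identity.
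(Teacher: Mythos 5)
Your proposal is correct and follows essentially the same route as the paper: define the averaged kernel, check that it solves the drifting heat equation with the right boundary and initial data (the latter being exactly the content of Lemma~\ref{Lb1}, via the factorization $dv_\eps=\eps^q e^{-f}\,dv\,d\xi$), and conclude by uniqueness on the compact domain. You supply more detail than the paper's ``obviously'' --- in particular the $\omega$-independence and the use of \eqref{lap} to see that the $\Delta_{S^q}$ term drops out --- but the argument is the same.
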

\begin{proof}
Let
\[
H_{\eps,\Omega}(x,y,t)=\eps^q\int_{S^q(1)}\tilde H_{\eps,\tilde\Omega}((x,\omega),(y,\xi),t) d\xi.
\]
Obviously,
$H_{\eps,\Omega}$ satisfies the heat kernel equation
\[
\left(\frac{\pa}{\pa t}-\Delta_{f,x}\right) H_{\eps,\Omega}(x,y,t)=0
\]
and $ H_{\eps,\Omega}(x,y,t)=  H_{\eps,\Omega} (y,x,t)$.
Therefore both $H_{f,\Omega}$ and $H_{\eps,\Omega}$ satisfy the heat kernel equation for the drifting Laplacian with the same initial  value (cf. Lemma~\ref{Lb1})  and boundary value. The corollary follows from the maximum principle.
\end{proof}

The following corollary generalizes~\cite{Lu-Row}*{Corollary 1}, where only the Neumann eigenvalues were considered.

\begin{corl}
Let $\lambda_{k,\eps}(\tilde \Omega)$ be the Dirichlet  ({resp.} Neumann) eigenvalues of {$\Delta_\eps$ on}  $\tilde\Omega$ and let $\lambda_{k,f}(\Omega)$ be the Dirichlet  ({resp.} Neumann) eigenvalues of the drifting  Laplacian on $\Omega$. Then
\[
\lambda_{k,\eps}(\tilde \Omega)\to\lambda_{k,f}(\Omega)
\]
for $\eps\to 0$.

\end{corl}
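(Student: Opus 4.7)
The plan is to separate variables by expanding in spherical harmonics on $S^q(1)$, exploiting the structure of the warped-product Laplacian given in \eqref{lap}: $\Delta_\eps = \Delta_f + \eps^{-2} e^{\frac{2}{q}f}\, \Delta_{S^q}$. Since $\Delta_{S^q}$ acts only in $\omega$ and its coefficient depends only on $x$, the operator $\Delta_\eps$ preserves the spherical-harmonic decomposition of $L^2(\tilde\Omega, dv_\eps)$, and the eigenvalue problem on $\tilde\Omega$ decouples into a sequence of eigenvalue problems on $\Omega$ indexed by the modes.

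Concretely, let $\{Y_j\}_{j\geq 0}$ be an $L^2(S^q(1))$-orthonormal basis of spherical harmonics with $-\Delta_{S^q} Y_j = \mu_j Y_j$, so that $\mu_0 = 0$ with $Y_0$ constant, and $\mu_j \geq q$ for $j \geq 1$. Writing a Dirichlet (resp.\ Neumann) eigenfunction of $-\Delta_\eps$ on $\tilde\Omega = \Omega \times S^q$ as $u(x,\omega) = \sum_j u_j(x) Y_j(\omega)$ and matching coefficients gives the family of scalar problems
\begin{equation*}
L_{j,\eps}\, u_j := -\Delta_f u_j + \eps^{-2} \mu_j\, e^{\frac{2}{q}f}\, u_j = \lambda\, u_j \quad \text{on } \Omega,
\end{equation*}
with the corresponding boundary condition on $\partial\Omega$; since $S^q$ is closed, the boundary condition on $\partial\Omega \times S^q$ translates mode by mode. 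Conversely, any eigenfunction of $L_{j,\eps}$ with eigenvalue $\lambda$ yields an eigenfunction $u_j(x) Y_j(\omega)$ of $-\Delta_\eps$ with the same eigenvalue. Hence the spectrum of $-\Delta_\eps$ on $\tilde\Omega$ is the union, counted with multiplicities $\dim\ker(\Delta_{S^q}+\mu_j)$, of the spectra of the operators $L_{j,\eps}$.

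For $j = 0$, $L_{0,\eps} = -\Delta_f$ contributes precisely $\{\lambda_{k,f}(\Omega)\}_{k \geq 1}$, independent of $\eps$. For $j \geq 1$ the potential $V_{j,\eps} = \eps^{-2}\mu_j e^{\frac{2}{q}f}$ is strictly positive; setting $c := \inf_{\bar\Omega} e^{\frac{2}{q}f} > 0$ (using compactness of $\bar\Omega$), the min-max principle yields that every eigenvalue of $L_{j,\eps}$ is bounded below by $\eps^{-2} \mu_j c$, which diverges as $\eps \to 0$. Thus for any fixed $k$, once $\eps$ is small enough that $\eps^{-2} q c > \lambda_{k,f}(\Omega)$, every eigenvalue of $-\Delta_\eps$ arising from a mode $j \geq 1$ exceeds $\lambda_{k,f}(\Omega)$, so the $k$ smallest Dirichlet (resp.\ Neumann) eigenvalues of $-\Delta_\eps$ on $\tilde\Omega$ are exactly $\lambda_{1,f}(\Omega), \ldots, \lambda_{k,f}(\Omega)$. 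In particular $\lambda_{k,\eps}(\tilde\Omega) \to \lambda_{k,f}(\Omega)$, with equality for $\eps$ sufficiently small (depending on $k$). The main technical point I would want to verify carefully is that the formal spherical-harmonic expansion is valid at the level of the Sobolev spaces underlying the variational problem, so that the mode decoupling and the boundary-condition translation are rigorous rather than merely formal; this is routine because $\{Y_j\}$ is a complete orthonormal basis of $L^2(S^q)$ and the product structure of $\tilde\Omega$ makes the decomposition compatible with the Sobolev inner product induced by $g_\eps$.
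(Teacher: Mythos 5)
Your argument is correct, and it takes a genuinely different route from the paper. The paper proves the corollary by taking traces: it expands both heat kernels in eigenfunctions, invokes Corollary~\ref{cor53} (the identity $H_{f,\Omega}(x,y,t)=\eps^q\int_{S^q(1)}\tilde H_{\eps,\tilde\Omega}((x,\omega),(y,\xi),t)\,d\xi$) to relate $\sum_k e^{-\lambda_{k,f}t}$ to $\sum_k e^{-\lambda_{k,\eps}t}$, and then uses linear independence of the exponentials $e^{-\lambda t}$. You instead diagonalize $\Delta_\eps$ directly via the spherical-harmonic decomposition, which is legitimate because \eqref{lap} shows the fiber operator $\eps^{-2}e^{2f/q}\Delta_{S^q}$ has an $x$-dependent scalar coefficient only, so each summand $L^2(\Omega,e^{-f}dv)\otimes Y_j$ is invariant and the quadratic form splits mode by mode. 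Your approach buys more than the statement asks for: it identifies the full Dirichlet (resp.\ Neumann) spectrum of $\tilde\Omega$ as the union over $j$ of the spectra of the drifting Schr\"odinger operators $-\Delta_f+\eps^{-2}\mu_j e^{2f/q}$, gives the explicit threshold $\eps^{-2}q\inf_{\bar\Omega}e^{2f/q}>\lambda_{k,f}(\Omega)$ beyond which $\lambda_{k,\eps}(\tilde\Omega)=\lambda_{k,f}(\Omega)$ exactly, and makes transparent which eigenvalues of $\tilde\Omega$ are ``extra'' and why they escape to $+\infty$ as $\eps\to 0$. This is also a useful sanity check on the trace argument: since the $j\ge 1$ modes contribute strictly positive terms, the two full heat traces cannot literally coincide for fixed $\eps$, so the mode-by-mode bookkeeping you carry out is exactly what is needed to isolate the $j=0$ contribution; the remaining point you flag --- that the expansion is compatible with the $H^1(g_\eps)$ form domain and that the boundary condition on $\partial\Omega\times S^q$ descends to each mode --- is indeed routine for a product domain with a closed fiber.
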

\begin{proof}
On a compact manifold,  $H_{f, \Omega}(x,y,t)$ has the eigenfunction expansion
\[
H_{f, \Omega}(x,y,t)= \sum_{k=1}^\infty e^{-\lambda_{k,f}t} \phi_{k,f}(x)\, \phi_{k,f}(y)
\]
where the $\phi_{k,f} $ are eigenfunctions forming an orthonormal basis of the space of weighted $L^2$ integrable functions of $\Omega$, such that $\phi_{k,f} $ corresponds to the eigenvalue $\lambda_{k,f}$. Similarly,  on $\tilde\Omega$
\[
\tilde H_{\eps,\tilde\Omega}((x,\omega),(y,\xi),t) =\sum_{k=1}^\infty e^{-\lambda_{k,\eps} t} \phi_{k,\eps} (x,\omega)\, \phi_{k,\eps}(y,\xi)
\]
where the  $\phi_{k,eps} $ are eigenfunctions forming an orthonormal basis of the space of $L^2$ integrable functions of $\tilde\Omega$, such that $\phi_{k,\eps} $ corresponds to the eigenvalue $\lambda_{k,\eps}$.  Therefore,
\[
\int_{\Omega} H_{f, \Omega}(x,x,t) e^{-f(x)} dx = \sum_{k=1}^\infty e^{-\lambda_{k,f} t}
\]
whereas,
\[
\int_{\tilde \Omega} H_{\eps,\tilde\Omega}((x,\omega),(x,\omega),t)\;  \eps^q \,e^{-f(x)} d\omega \,dx = \sum_{k=1}^\infty e^{-\lambda_{k,\eps} t}.
\]
By Corollary~\ref{cor53}, we have
\[
\sum_{k=1}^\infty e^{-\lambda_{k,f}t}=\sum_{k=1}^\infty e^{-\lambda_{k,\eps}t}
\]
for any $t>0$. Thus the conclusion of the corollary follows since the functions $e^{-\lambda t}$ are linearly independent for distinct $\lambda$.
\end{proof}

On a noncompact complete manifold, the maximum principle does not apply directly. Moreover,  the heat kernel might not be unique in general.\footnote{By the result of Dodziuk~\cite{dod} and Corollary~\ref{cor42}, the heat kernels are {indeed} unique under the assumption that the tensors $\widetilde{{\rm Ric}}$ on $\tilde M_\eps$ and ${\rm Ric}_f^q$ on $M$ are bounded below.}
Nevertheless, we still have the following
\begin{thm}\label{Tb2} For $\eps>0$, we have
\[
H_f(x,y,t)=H_\eps(x,y,t).
\]
\end{thm}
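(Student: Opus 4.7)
My plan is to prove the theorem by an exhaustion argument, reducing the noncompact identity to the compact version already established in Corollary~\ref{cor53}. The two key ingredients are: (i) the fact that the heat kernel corresponding to the Friedrichs extension on a complete noncompact manifold is the minimal positive heat kernel, and equals the monotone increasing limit of the Dirichlet heat kernels on any compact exhaustion; and (ii) the product structure $\tilde M_\eps = M\times S^q$, which gives a natural compatibility between exhaustions of $M$ and of $\tilde M_\eps$.

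First, I would choose a smooth compact exhaustion $\Omega_1\subset\Omega_2\subset\cdots\subset M$ with $\bigcup_k \Omega_k = M$ and set $\tilde\Omega_k=\Omega_k\times S^q$, which is then a smooth compact exhaustion of $\tilde M_\eps$. Let $H_{f,\Omega_k}(x,y,t)$ denote the Dirichlet heat kernel of $\Delta_f$ on $\Omega_k$, and $\tilde H_{\eps,\tilde\Omega_k}((x,\omega),(y,\xi),t)$ the Dirichlet heat kernel of $\Delta_\eps$ on $\tilde\Omega_k$. By Corollary~\ref{cor53}, for every $k$,
\[
H_{f,\Omega_k}(x,y,t) \;=\; \eps^q\int_{S^q(1)}\tilde H_{\eps,\tilde\Omega_k}((x,\omega),(y,\xi),t)\,d\xi.
\]

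Second, I would invoke the standard monotone convergence of Dirichlet heat kernels on an exhaustion: the kernels $H_{f,\Omega_k}$ increase monotonically to the minimal heat kernel of $\Delta_f$ on $M$, which is precisely $H_f(x,y,t)$ (the one associated to the Friedrichs extension, as specified in the subsection introduction). Analogously, $\tilde H_{\eps,\tilde\Omega_k}$ increases monotonically to $\tilde H_\eps$ on the noncompact warped product $\tilde M_\eps$. Applying the monotone convergence theorem to pass the limit under the $S^q$-integral yields
\[
H_f(x,y,t)\;=\;\eps^q\int_{S^q(1)}\tilde H_\eps((x,\omega),(y,\xi),t)\,d\xi\;=\;H_\eps(x,y,t),
\]
where the last equality is the definition of $H_\eps$ (which by Lemma~\ref{Lb2} is independent of $\omega$, so we may evaluate at any chosen $\omega$).

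The delicate point I would need to justify carefully is the identification of the monotone limits with the intended Friedrichs heat kernels on both sides. On the upstairs side $\tilde M_\eps$, this is the classical fact of Dodziuk and others for Riemannian manifolds; on the downstairs side $M$ with the weighted measure $e^{-f}dv$, one needs the same exhaustion statement for the weighted Laplacian, which is entirely parallel since $\Delta_f$ is symmetric and nonpositive on $L^2(e^{-f}dv)$ and the Dirichlet forms $\int_{\Omega_k}|\n u|^2 e^{-f}$ are monotone. The fact that Lemma~\ref{Lb1} already identifies the initial condition of $H_\eps$ as $\delta_x(y)e^{f(x)}$ with respect to the weighted measure is reassuring: it says that $H_\eps$ is \emph{some} heat kernel for $\Delta_f$, and the exhaustion argument upgrades this to the statement that it is in fact the minimal (Friedrichs) one. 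No appeal to a global maximum principle on $M$ is required, which is important precisely because the theorem is stated without any curvature hypothesis.
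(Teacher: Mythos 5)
Your proposal is correct and follows essentially the same route as the paper: exhaust $M$ by bounded smooth domains $\Omega_i$, set $\tilde\Omega_i=\Omega_i\times S^q$, apply Corollary~\ref{cor53} on each $\Omega_i$, and pass to the limit using the monotone convergence of Dirichlet heat kernels to the minimal (Friedrichs) heat kernels on both $M$ and $\tilde M_\eps$. The extra care you take in justifying the interchange of limit and $S^q$-integral and in identifying the limits with the Friedrichs kernels is a welcome elaboration of steps the paper states without comment.
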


\begin{proof}
Let $\{\Omega_i\}$ be an exhaustion of $M$ by bounded domains. That is
\begin{enumerate}
\item $\Omega_i$ are bounded domains in $M$ with smooth boundary,
\item $\Omega_i\subset\Omega_{i+1}$ for any positive integer $i$ and
\item $\bigcup\Omega_i=M$.
\end{enumerate}
Then we have
\begin{align*}
&
\lim_{i\to\infty} H_{f,\Omega_i}(x,y,t)=H_f(x,y,t)\\
&\lim_{i\to\infty} \tilde H_{\eps,\tilde \Omega_i}((x,\xi),(y,\eta),t)=\tilde H_\eps((x,\xi),(y,\eta),t),
\end{align*}
where $\tilde\Omega_i=\Omega_i\times S^q$. The theorem follows from Corollary~\ref{cor53}.
\end{proof}

\begin{thm} \label{Tb1}
Let $(M^n,g,e^{-f} dv)$ be a  weighted manifold  such that for some positive integer $q$,
\[
\textup{Ric}^q_f\geq -K
\]
on $B_{x_o}(4R+4)\subset M$. Then for any  $x,y\in B_{x_o}(R/4)$, $t<R^2/4$ and $\delta\in(0,1)$
\begin{equation}
\begin{split}\label{heat41-new}
H_{f}(x,y,t) \leq  \;C_3(\delta, n+q)  & \;   V_f^{-1/2}(x,\sqrt{t}) \,V_f^{-1/2}(y,\sqrt{t}) \\
&\quad   \cdot  \exp [\,-\lambda_{1,f}(M)\,t  -\frac{d^2(x,y)}{C_4(\delta, n+ q)\,t} + C_5(n+q) \sqrt{K\,t} \,]
\end{split}
\end{equation}
for some positive constants $C_3(\delta,n+q),  C_4(\delta,n+q)$ and  $C_5(n+q).$

Whenever $\textup{Ric}^q_f\geq -K$ on $M$, then the same bound also holds for all $x,y \in M$ and $t>0$.
\end{thm}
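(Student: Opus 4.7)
The strategy is to transport the Riemannian heat kernel bound of Remark \ref{Rmk24} down from the warped products $\tilde M_\eps$ onto $M$ by integrating out the sphere factor and then taking $\eps \to 0$. Fix $R>0$ and an exhaustion $\{\Omega_i\}$ of $M$ by smoothly bounded domains containing $B_{x_o}(4R+4)$. By Proposition \ref{prop21}, the Ricci curvature of $\tilde M_\eps$ along $M$-directions is precisely $\textup{Ric}_f^q$, while the fiber curvature is $((q-1)\eps^{-2}e^{2f/q} - q^{-1}e^f\Delta e^{-f})\delta_{\alpha\beta}$, which becomes nonnegative on the compact set $B_{x_o}(4R+4)$ once $\eps$ is small enough. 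The inclusion $\tilde B_{\tilde x_o}(4R) \subset B_{x_o}(4R+1)\times S^q$ (for small $\eps$, as in the proof of Corollary \ref{Corl0_T1-new}) then gives $\widetilde{{\rm Ric}} \geq -K$ on $\tilde B_{\tilde x_o}(4R)$, so Saloff-Coste's sharpened bound from Remark \ref{Rmk24} applies to the Dirichlet heat kernel $\tilde H_{\eps,\tilde \Omega_i}$ on $\tilde \Omega_i = \Omega_i \times S^q$ for $x,y \in \tilde B_{\tilde x_o}(R/4)$ and $t \leq R^2/4$.

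Next I would convert every geometric quantity on $\tilde M_\eps$ into one on $M$. The identity $H_{f,\Omega_i}(x,y,t) = \eps^q \int_{S^q(1)} \tilde H_{\eps,\tilde\Omega_i}((x,\omega),(y,\xi),t)\,d\xi$ from Corollary \ref{cor53} is the key bridge. Three elementary observations turn the Riemannian bound into the weighted bound with the correct prefactors:
\begin{enumerate}
\item Since the projection $\tilde M_\eps \to M$ is distance non-increasing, $\tilde d((x,\omega),(y,\xi)) \geq d(x,y)$, so the Gaussian factor $\exp[-\tilde d^2/(C_4 t)]$ may be replaced by $\exp[-d(x,y)^2/(C_4 t)]$ at the cost of only weakening the inequality;
\item The volume element on $\tilde M_\eps$ equals $\eps^q e^{-f}\,dv_M\wedge dv_{S^q(1)}$, so the inclusions $B_x(r)\times S^q \subset \tilde B_{(x,\omega)}(r + \delta(\eps))$ used in Lemma \ref{VolCom} give $\tilde V((x,\omega),\sqrt t) \geq \eps^q\, \omega_q\, V_f(x,\sqrt t - \delta(\eps))$, where $\delta(\eps) \to 0$;
\item The spectral identity $\lambda_1(\tilde\Omega_i,g_\eps) = \lambda_{1,f}(\Omega_i)$ (the compact version of Theorem \ref{thm25-2}, noted in the proof of Corollary \ref{Corl0_T1-new}) makes the exponential decay term independent of $\eps$.
\end{enumerate}
Substituting these three estimates into the Saloff-Coste bound, then integrating against $\eps^q\,d\xi$ on $S^q(1)$, the factors of $\eps^q$ exactly cancel $\eps^{-q}$ from the volume comparison, and the factor $\omega_q$ cancels against $\int_{S^q(1)} d\xi = \omega_q$. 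This yields, for each fixed $i$ and all sufficiently small $\eps$,
\[
H_{f,\Omega_i}(x,y,t) \leq C\, e^{-\lambda_{1,f}(\Omega_i)t}\, V_f^{-1/2}(x,\sqrt t - \delta(\eps))\,V_f^{-1/2}(y,\sqrt t - \delta(\eps))\,\exp\bigl[-d(x,y)^2/(C_4 t) + C_5\sqrt{Kt}\bigr].
\]
Letting $\eps \to 0$ eliminates $\delta(\eps)$ by continuity of $r \mapsto V_f(x,r)$, and then letting $i \to \infty$ along the exhaustion produces $H_{f,\Omega_i} \uparrow H_f$ (Theorem \ref{Tb2}) and $\lambda_{1,f}(\Omega_i) \downarrow \lambda_{1,f}(M)$, giving the claimed estimate for $x,y \in B_{x_o}(R/4)$, $t \leq R^2/4$. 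For the global statement, when $\textup{Ric}_f^q \geq -K$ on all of $M$ the preceding localization holds for every $R$, and sending $R\to\infty$ removes the restriction on $x,y$ and $t$.

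The main obstacle is bookkeeping in the $\eps \to 0$ limit: one must verify simultaneously that (a) the curvature lower bound on $\tilde M_\eps$ is uniform in $\eps$ on the relevant ball, (b) the volume ratios $\tilde V/(\eps^q V_f)$ stay bounded with an error $\delta(\eps) \to 0$ in the radial argument, and (c) the Dirichlet eigenvalues $\lambda_1(\tilde\Omega_i,g_\eps)$ do not drift. Item (a) is the content of Proposition \ref{prop21} plus a compactness argument already used in the proof of Corollary \ref{Corl0_T1-new}, item (b) is exactly the content of Lemma \ref{VolCom}, and item (c) is the equality noted just after Theorem \ref{thm25-2}; so each piece is already in place. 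The only technical care needed is ensuring that the dependence of constants $C_3(\delta,n+q), C_4(\delta,n+q), C_5(n+q)$ from Remark \ref{Rmk24} does not involve $\eps$, which is clear since these depend only on the dimension $n+q$ and on the lower Ricci bound $-K$, both of which are $\eps$-independent.
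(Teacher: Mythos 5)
Your proposal is correct and follows essentially the same route as the paper's proof: both apply the Saloff-Coste bound of Remark \ref{Rmk24} on the warped product $\tilde M_\eps$ (with the curvature bound from Proposition \ref{prop21}), transfer it to $M$ via the fiber-averaging identity of Corollary \ref{cor53}/Theorem \ref{Tb2}, and use $\tilde d \geq d$, the volume inclusion $\tilde V((x,\omega),\sqrt t)\geq \eps^q V_f(x,\sqrt t - C\eps)$, and the spectral identity before sending $\eps\to 0$. The only cosmetic difference is that you carry the exhaustion $\{\Omega_i\}$ explicitly and pass to the limit at the end, whereas the paper fixes one large $\Omega$ and uses $\lambda_{1,f}(\Omega)\geq\lambda_{1,f}(M)$ directly.
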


\begin{proof} Let $\Omega$ be a compact domain in $M$ large enough such that $B_{x_o}(4R+4)\subset \Omega$. Let $\tilde x_o$ be a point in $\tilde M$ with first component $x_o$ and $\tilde \Omega= \Omega \times S^q$. Since $\tilde B_{\tilde x_o}(4R+4) \subset B_{x_o}(4R+4) \times S^q$, as in the proof of Corollary \ref{Corl0_T1-new}, we take $\eps(R)$ small enough such that for all $\eps<\eps(R)$ the Ricci curvature of $\tilde M_\eps$ satisfies
\[
\widetilde  {\rm Ric}\geq -K
\]
on $B_{x_o}(4R+4)\times S^q$ and therefore on $\tilde B_{\tilde x_o}(4R+4)\subset \tilde\Omega$. We observe that for any  $x, y\in B_{x_o} (R/4)$, the points $(x,\omega), (y,\xi) \in \tilde B_{\tilde x_o} ((R+1)/4)$ for $\eps$ small enough.

Therefore, by Theorem~\ref{thm24}, Remark~\ref{Rmk24} and Theorem \ref{Tb2}, for any $x, y\in B_{x_o} (R/4)$, $\delta\in(0,1)$ and $t\leq R^2/4$, we have \begin{equation*}
\begin{split}
H_{f}(x,y,t) & \leq \eps^q \, C_3(\delta, n+q)  \,\exp[ -\lambda_1(\tilde \Omega)t +  C_5(n+q) \sqrt{K\,t} ]\\
& \quad \cdot \, \int_{S^q(1)} \tilde V^{-1/2}((x,\omega),\sqrt{t})\,\tilde V^{-1/2}((y,\xi),\sqrt{t}) \cdot \exp[ -\frac{\tilde{d}^{\,2}((x,\omega),(y,\xi))}{C_4(\delta, n+ q)\,t}  ]\, d\xi
\end{split}
\end{equation*}
where $\tilde{d}(\cdot,\cdot)$ is the distance function in  $\tilde M$ and $\tilde V(\tilde{x},r)$ is the volume  of the ball at $\tilde x$ with radius $r$ contained in $\tilde M_\eps$.

Given that $\tilde{d}((x,\xi),(y,\eta))\geq d(x,y)$, and using Corollary \ref{Corl0_T1-new}  we get \begin{equation*}
\begin{split}
H_{f}(x,y,t)\leq &\, C(\delta, n,q)  \, \exp [-\lambda_{1,f}(\Omega)t \,+ C_5(n+q) \sqrt{K\,t}\,  -\frac{{d}^2(x,y)}{C_4(\delta, n+ q)\,t} ]\, \\
&\, \cdot \eps^q\, \int_{S^q(1)} \tilde V^{-1/2}((x, \omega),\sqrt{t})\,\tilde V^{-1/2}((y,\xi),\sqrt{t}) \, d\xi.
\end{split}
\end{equation*}

For any fixed $(x,\omega)$ and $t>0$, there exists an $\eps_o(x,t)$ small enough such that for all $\eps<\eps_o$
\[
\tilde B_{(x,\omega)}(\sqrt t) \supset B_x(\sqrt t-C\eps)\times S^q
\]
for a constant $C$ which may depend on $x$. It follows that
\[
\tilde V((x,\omega),\sqrt t)\geq \eps^q\,V_f(x,\sqrt t-C\eps)
\]
for $\eps$ sufficiently small.

The theorem is proved  by sending $\eps\to 0$ in the right  side.
\end{proof}

\begin{remark}
We would like to further point out that only assuming $Ric_f$ bounded below and $f$ of linear growth at a point as in \cite{MuW2} would not be sufficient to obtain the global heat kernel  bound this way. In order to get the heat kernel estimates,  an assumption on the {\it uniform} linear growth of $f$ is needed, which is almost equivalent to assuming that   the gradient of $f$ is bounded.
\end{remark}

Theorem \ref{Tb2} also shows that lower bounds on the heat kernels   $\tilde H_{\eps, \tilde \Omega_i}$ imply   lower bounds on $H_{f, \Omega_i}$. Theorem \ref{thm25} of Saloff-Coste gives us the following lower estimate:
\begin{thm}
Let $(M^n,g,e^{-f} dv)$ be a  weighted manifold such that for some positive integer $q$ and $K\geq0$,
\[
\textup{Ric}^q_f\geq -K
\]
on $B_{x_o}(4R+4)\subset M$. Then for any  $x,y\in B_{x_o}(R/4)$, $t<R^2/4$ and $\delta\in(0,1)$
\begin{equation*}
\begin{split}
  H_{\Omega}(x,y,t)  \geq C_6(\delta, n+q) \, &    V^{-1/2}(x,\sqrt{t})\,  V^{-1/2}(y,\sqrt{t}) \\
&\quad \cdot \exp{ [\,-C_7(\delta,n+q)\, \frac{ {d}^2(x,y)}{t} - C_8 (n+q) \,K \,t \,]}
\end{split}
\end{equation*}
for some  positive constants $C_6(\delta, n+q), C_7(\delta, n+q)$ and $C_8(n+q)$.
Whenever $\textup{Ric}^q_f\geq -K$ on $M$, then the same bound also holds for all $x,y \in M$ and $t>0$.
\end{thm}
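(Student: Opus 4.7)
The plan is to mirror the proof of Theorem~\ref{Tb1}, exchanging the Saloff-Coste upper bound with the lower bound of Theorem~\ref{thm25}. First I would fix a compact domain $\Omega \supset B_{x_o}(4R+4)$, set $\tilde\Omega = \Omega \times S^q$, and choose $\eps(R)$ so small that two things hold simultaneously on $\tilde B_{\tilde x_o}(4R+4)$: (i) by Proposition~\ref{prop21}, the tangential eigenvalue $(q-1)\eps^{-2}e^{2f/q} - q^{-1}e^f\Delta e^{-f}$ is nonnegative, so $\widetilde{\mathrm{Ric}} \geq -K$ on $\tilde B_{\tilde x_o}(4R+4)$; and (ii) any $x,y \in B_{x_o}(R/4)$ lift to points $(x,\omega),(y,\xi) \in \tilde B_{\tilde x_o}((R+1)/4)$. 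Then Theorem~\ref{thm25} applied to $\tilde H_{\eps,\tilde\Omega}$ yields
\[
\tilde H_{\eps,\tilde\Omega}((x,\omega),(y,\xi),t) \;\geq\; C_6\,\tilde V^{-1/2}((x,\omega),\sqrt t)\,\tilde V^{-1/2}((y,\xi),\sqrt t)\,\exp\Bigl[-\tfrac{\tilde d^{\,2}}{C_7 t} - C_8 K t\Bigr],
\]
and I would integrate this pointwise lower bound over $\xi \in S^q(1)$ with the prefactor $\eps^q$, invoking Corollary~\ref{cor53} (and Theorem~\ref{Tb2}) to rewrite the left-hand side as $H_{f,\Omega}(x,y,t)$.

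The key comparisons between warped and weighted geometry enter next. On the distance side, any curve in $\tilde M_\eps$ joining $(x,\omega)$ to $(y,\xi)$ decomposes into an $M$-part and an $S^q$-part, so one has $\tilde d((x,\omega),(y,\xi)) \leq d(x,y) + \pi\eps\, e^{-f_{\min}/q}$ locally, where $f_{\min}$ is the minimum of $f$ on a relevant compact set; this makes $\tilde d^{\,2} \to d^2(x,y)$ as $\eps \to 0$. On the volume side, since the $S^q$-fiber over $x$ has $g_\eps$-diameter $O(\eps)$, the containment $\tilde B_{(x,\omega)}(\sqrt t) \subset B_x(\sqrt t) \times S^q$ holds for small $\eps$, giving
\[
\tilde V((x,\omega),\sqrt t) \;\leq\; \eps^q\,\mathrm{vol}(S^q(1))\,V_f(x,\sqrt t),
\]
hence $\tilde V^{-1/2}((x,\omega),\sqrt t) \geq c(q)\,\eps^{-q/2} V_f^{-1/2}(x,\sqrt t)$. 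Combining the $\eps^q$ from the averaging with the two $\eps^{-q/2}$ factors gives a finite, $\eps$-independent prefactor, and sending first $\eps \to 0$ and then exhausting $M$ by $\Omega_i \uparrow M$ (so that $H_{f,\Omega_i} \uparrow H_f$) produces the claimed bound.

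The step I expect to require the most care is the volume comparison $\tilde V \leq \eps^q\, \mathrm{vol}(S^q(1))\, V_f$ with a constant depending only on $n+q$. One has to verify that the containment of warped balls into product balls holds on a neighborhood large enough to cover the integration, and that the local maximum/minimum of $f$ affecting the fiber diameter does not enter the final constants — this is what enables the clean limit $\eps \to 0$. Once the geometric dictionary between $\tilde M_\eps$ and $(M,e^{-f}dv)$ is pinned down uniformly in $\eps$, the remaining work is bookkeeping: the exponential term $\exp[-C_8(n+q)Kt]$ descends verbatim from Theorem~\ref{thm25}, and the $d^2(x,y)/(C_7 t)$ term emerges after taking $\eps \to 0$ in the exponential.
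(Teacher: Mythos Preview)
Your proposal is correct and follows exactly the route the paper indicates (the paper gives no detailed proof, only the sentence ``Theorem~\ref{Tb2} also shows that lower bounds on the heat kernels $\tilde H_{\eps,\tilde\Omega_i}$ imply lower bounds on $H_{f,\Omega_i}$'' together with the citation of Theorem~\ref{thm25}). You have correctly reversed the two geometric comparisons relative to Theorem~\ref{Tb1}---using $\tilde B_{(x,\omega)}(\sqrt t)\subset B_x(\sqrt t)\times S^q$ (which in fact holds for all $\eps$, not just small ones, since the projection to $M$ is a Riemannian submersion) and $\tilde d\leq d+O(\eps)$---and the $\eps$-dependence in the fiber diameter is harmless because it disappears in the limit while the constants $C_6,C_7,C_8$ come unchanged from Theorem~\ref{thm25}.
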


\subsection{Other inequalities}\label{sub51}

Although it will not be used in this paper, we would like to pursue the Li-Yau type of inequality in Barky-\'Emery geometry. Using Theorem~\ref{r1}, we obtain

\begin{thm}
Let $(M^n,g,e^{-f} dv)$ be a weighted manifold such that for some positive integer $q$ and $K\geq 0$
\[
\textup{Ric}^q_f\geq -K.
\]
Suppose that $u$ is a positive solution of the equation
\[
(\frac{\p}{\p t} -\Delta_f)u=0
\]
on $M\times[0,T]$. Then for any $\alpha>1$, $u$ satisfies the Li-Yau type estimate
\[
\frac{|\n u|^2}{u^2}-\alpha \frac{u_t}{u} \leq  \frac{(n+q) \alpha^2}{2t} +    \frac{(n+q) \, K \, \alpha^2}{2(\alpha-1)} +C(n+q)\frac{\alpha^2}{R^2}\left(\frac{\alpha^2}{\alpha-1}+R\sqrt K\right)
\]
for any $x,y\in M$ and $t>0$.
\end{thm}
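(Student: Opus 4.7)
The strategy is to transplant the Riemannian Li--Yau estimate (Theorem~\ref{r1}) from the warped product $\tilde M_\eps$ down to $M$ by viewing a solution of the drifting heat equation on $M$ as a solution of the usual heat equation on $\tilde M_\eps$ that happens to be constant along the $S^q$-fiber. Concretely, given a positive solution $u(x,t)$ of $(\partial_t-\Delta_f)u=0$ on $M\times[0,T]$, I will regard $u$ as a function on $\tilde M_\eps\times[0,T]$ which is independent of the $S^q$-coordinate. Since $\Delta_{S^q}u=0$, the warped-product decomposition \eqref{lap} gives
\[
\Delta_\eps u=\Delta_f u+\eps^{-2}e^{\frac{2}{q}f}\Delta_{S^q}u=\Delta_f u,
\]
so $u$ satisfies the heat equation $\partial_t u=\Delta_\eps u$ on $\tilde M_\eps$. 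Moreover, because the warped metric is a block sum, $|\nabla_\eps u|^2=|\nabla u|^2$ pointwise on $\tilde M_\eps$ for such a fiberwise-constant $u$.

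Next I invoke Proposition~\ref{prop21}: since $\textup{Ric}_f^q\geq -K$ on $M$, for any fixed $R>0$ we may choose $\eps(R)>0$ sufficiently small so that both
\[
B_{x_o}(4R)\times S^q\subset \tilde B_{\tilde x_o}(4R+1),\qquad \widetilde{\rm Ric}\geq -K \ \text{on}\ \tilde B_{\tilde x_o}(4R+1),
\]
where $\tilde x_o=(x_o,\omega)$ is any lift of $x_o$. This is possible because the ``fiber'' contribution $((q-1)\eps^{-2}e^{2f/q}-q^{-1}e^f\Delta e^{-f})\delta_{\alpha\beta}$ to $\widetilde{\rm Ric}_{\alpha\beta}$ becomes arbitrarily large for small $\eps$, while the horizontal Ricci components equal $({\rm Ric}_f^q)_{ij}\geq -K$. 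This is precisely the localization argument already used in Corollary~\ref{Corl0_T1-new} and Theorem~\ref{Tb1}.

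Having placed ourselves in the hypotheses of Theorem~\ref{r1} on the ball $\tilde B_{\tilde x_o}(4R+1)$ of $\tilde M_\eps$, applying that theorem to the lifted $u$ yields, at every point of $\tilde B_{\tilde x_o}(2R)$,
\[
\frac{|\nabla_\eps u|^2}{u^2}-\alpha\frac{u_t}{u}\leq\frac{(n+q)\alpha^2}{2t}+\frac{(n+q)K\alpha^2}{2(\alpha-1)}+C(n+q)\frac{\alpha^2}{R^2}\Bigl(\frac{\alpha^2}{\alpha-1}+R\sqrt{K}\Bigr).
\]
Restricting to points of $B_{x_o}(R)\hookrightarrow B_{x_o}(R)\times\{\omega\}\subset \tilde B_{\tilde x_o}(2R)$ (valid once $\eps$ is small enough) and using $|\nabla_\eps u|^2=|\nabla u|^2$ gives the same inequality for $u$ on $M$ at every $x\in B_{x_o}(R)$. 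Since the right-hand side is independent of $\eps$, the estimate persists with no warped-product data left in it. Letting $R\to\infty$ absorbs the last term and yields the global Li--Yau type bound for any $x\in M$ and $t>0$.

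\textbf{Main obstacle.} The only delicate point is the matching of balls between $M$ and $\tilde M_\eps$: one must verify that for a fixed $R$, small enough $\eps$ can be chosen so that the Ricci lower bound $\widetilde{\rm Ric}\geq -K$ from Proposition~\ref{prop21} holds on a $\tilde M_\eps$-ball containing $B_{x_o}(4R)\times S^q$, while simultaneously $B_{x_o}(R)\times\{\omega\}$ lies inside the smaller ball where the Li--Yau conclusion is valid. This is the same localization-and-collapse mechanism deployed throughout Sections~4--5, so no new ideas beyond those already developed are required.
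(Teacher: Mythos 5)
Your proposal is correct and follows exactly the route the paper intends: lift $u$ to $\tilde M_\eps$ as a fiberwise-constant solution of $\partial_t u=\Delta_\eps u$ via \eqref{lap}, use Proposition~\ref{prop21} to secure $\widetilde{\rm Ric}\geq -K$ on a large ball for small $\eps$, and apply Theorem~\ref{r1}; the paper's own "proof" is just the one-line remark that the estimate is obtained "by simply considering $u$ as a solution to the heat equation on a large enough ball of some $\tilde M_\eps$ and applying Theorem~\ref{r1}," so your write-up supplies precisely the details it omits.
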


This kind of estimate was obtained by Qian \cite{Qi} using the Li-Yau technique {for the case $\textup{Ric}_f^q\geq 0$ and X.-D. Li for the case  $\textup{Ric}_f^q\geq -K$~\cite{XDLi1}} (see also~\cite{XDLi2} {and ~\cite{XDLi3} where they are used to obtain upper and lower bound estimates on the heat kernel of the drifting Laplacian}). Here on the other hand, we obtain it effortnessly, by simply considering $u$ as a solution to the heat equation on a large enough ball of some $\tilde M_\eps$ and applying Theorem \ref{r1} to $u$.

\begin{remark}\label{rmk510} If we make the stronger assumptions $\textup{Ric}_f>-K$ and $|\n f|\leq a_o$, where $a_o$ is a constant, then it is possible to prove the following Li-Yau type of inequality:
\[
\frac{|\n u|^2}{u^2}-\alpha \frac{u_t}{u} \leq  \frac{n \alpha^2}{2t} +    \frac{C(n) \, (K+a_o^2) \, \alpha^2}{2(\alpha-1)} +C(n)\frac{\alpha^2}{R^2}\left(\frac{\alpha^2}{\alpha-1}+R(\sqrt K+ a_o)\right).
\]
This estimate is sharper when $t\to 0$, and can be used to study the very initial behavior of the Barky-\'Emery heat kernel (cf. ~\cite{xu}). The proof can be done by following the same procedure as in the classical case (for an outline of the classical argument see ~\cites{Davies,SchoenYau_bk}). The key inequality we use is the one similar to ~\eqref{abcd}. In particular, for any $u$ on $M$ we have
\[
|\n^2 u|^2\geq \frac{1}{n} (\Delta u)^2 \geq  \frac{1}{n} (\Delta_f u)^2 -2 a_o |\Delta_f u|\; |\n u|.
\]
\end{remark}

\section{The $L^p$ Essential Spectrum of the Drifting Laplacian}\label{sec7}
For the last two sections of this paper, we turn to the Bakry-\'Emery essential spectrum. Let $(M^n,g,e^{-f} dv)$ be a weighted manifold. We denote the $L^2$ norm associated to the measure $e^{-f}dv$ by $$\| u\|_2=\int_M |u|^2 \, e^{-f}dv.$$  Let $$L^2(M,e^{-f}dv)=\{u\mid \|u\|_2<\infty\}$$ be the real Hilbert space.
Similar to the $L^2$ norm, we can define the $L^p$ norms $$\| u\|_p=\int_M |u|^p \, e^{-f}dv,$$  and the Babach spaces $$L^p(M,e^{-f}dv)=\{u\mid \|u\|_p<\infty\}.$$
Since $-\Delta_f$ is  nonnegative definite, the $L^2$ essential spectrum is contained in the nonnegative real line.

\begin{lem}  \label{LE1}
The heat semigroup $e^{t\Delta_f}$  is well-defined  on $L^p$ for all $p\in [1,\infty]$. This in particular implies that the operator $e^{t\Delta_f}$ is bounded on $L^p$ for any $t\geq 0$.
\end{lem}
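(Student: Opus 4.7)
The plan is to build $e^{t\Delta_f}$ via its integral kernel and show it defines a contraction on $L^p$ for every $p\in[1,\infty]$ from the sub-Markovian (sub-stochastic) property of the kernel, then interpolate. Concretely, define
\[
(e^{t\Delta_f}u)(x)=\int_M H_f(x,y,t)\,u(y)\,e^{-f(y)}\,dv(y),
\]
where $H_f$ is the heat kernel constructed in Section 5 as the monotone limit $H_{f,\Omega_i}\nearrow H_f$ along an exhaustion $\{\Omega_i\}$ of $M$ by bounded smooth domains. Recall that $H_f$ is nonnegative and symmetric with respect to the weighted measure $e^{-f}dv$.

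The first key step is to establish the sub-stochastic bound
\[
\int_M H_f(x,y,t)\,e^{-f(y)}\,dv(y)\leq 1\qquad\text{for all }x\in M,\ t>0.
\]
On each compact $\Omega_i$ with smooth boundary, the constant function $1$ is a supersolution to the Dirichlet problem for $\partial_t-\Delta_f$, so the maximum principle gives $\int_{\Omega_i}H_{f,\Omega_i}(x,y,t)e^{-f(y)}dv(y)\leq 1$. Monotone convergence as $i\to\infty$ then yields the claimed estimate.

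With this in hand, the $L^\infty$ bound is immediate: $\|e^{t\Delta_f}u\|_\infty\leq\|u\|_\infty$. For the $L^1$ bound, use the symmetry $H_f(x,y,t)=H_f(y,x,t)$ together with Fubini:
\[
\|e^{t\Delta_f}u\|_1\leq\int_M|u(y)|\Bigl(\int_M H_f(x,y,t)e^{-f(x)}dv(x)\Bigr)e^{-f(y)}dv(y)\leq\|u\|_1.
\]
For $1<p<\infty$, either apply Riesz--Thorin interpolation between the two endpoint contractions, or argue directly by Jensen/H\"older against the probability measure $H_f(x,y,t)e^{-f(y)}dv(y)/\!\int H_f(x,y,t)e^{-f(y)}dv(y)$ and then integrate, using symmetry as above. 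Either route gives $\|e^{t\Delta_f}u\|_p\leq\|u\|_p$ for every $p\in[1,\infty]$.

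The main obstacle is the sub-stochastic estimate, since we are on a noncompact manifold without any assumed curvature bound in the statement of the lemma. The Dirichlet-exhaustion argument circumvents this: we never need a global maximum principle, only the maximum principle on each compact $\Omega_i$, which is classical. Once the sub-Markovian property is secured, boundedness on every $L^p$ follows routinely, and $e^{t\Delta_f}$ is well-defined on each $L^p$ as a strongly continuous contraction semigroup by standard semigroup theory applied to the symmetric Markovian form.
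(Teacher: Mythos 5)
Your argument is correct, but it is not the route the paper takes. The paper disposes of this lemma in one line: it invokes Kato's inequality $\int_M \langle\nabla|u|,\nabla|u|\rangle\,e^{-f}dv\leq\int_M\langle\nabla u,\nabla u\rangle\,e^{-f}dv$ together with Davies, Theorems 1.3.2--1.3.3, i.e.\ the Beurling--Deny criteria: since the weighted Dirichlet form decreases under $u\mapsto|u|$ and under truncation, the $L^2$ semigroup is positivity-preserving and $L^\infty$-contractive, hence extends to a contraction semigroup on every $L^p$. That argument lives entirely at the level of the quadratic form and needs no information about the heat kernel. You instead prove the same sub-Markovian property concretely: you represent $e^{t\Delta_f}$ by the kernel $H_f$ built in Section 5 as the increasing limit of Dirichlet kernels $H_{f,\Omega_i}$, establish $\int_M H_f(x,y,t)e^{-f(y)}dv(y)\leq 1$ by the parabolic maximum principle on each compact $\Omega_i$ followed by monotone convergence, and then get the $L^\infty$ and $L^1$ contractions from this bound plus symmetry, with Riesz--Thorin (or Jensen) filling in $1<p<\infty$. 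Both proofs are sound; the paper's is shorter and independent of kernel existence and regularity, while yours is self-contained, more explicit, and makes visible exactly where the contraction constants come from. Two small caveats on your write-up: you should note that the kernel you use is the one of the Friedrichs extension (the minimal positive kernel), so that the integral operator really is the $L^2$ semigroup being extended; and your final claim of a \emph{strongly continuous} contraction semigroup should exclude $p=\infty$, where (as the paper does for the generator) one only has the weak-$*$ continuous dual of the $L^1$ semigroup --- though the lemma as stated asks only for well-definedness and boundedness, which you do obtain.
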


This is a direct consequence of Kato's inequality
\[
\int_M \<\n |u|, \n |u| \> \, e^{-f}dv\leq \int_M \<\n u, \n u \> \, e^{-f}dv.
\]
 (see Davies~\cite{Davies}*{Theorems 1.3.2, 1.3.3})

We denote by $\Delta_{p,f}$ the infinitesimal generator of $e^{t\Delta_f}$ in the $L^p$ space.
We  denote the essential spectrum of $-\Delta_{p,f}$ by $\sigma(-\Delta_{p,f})$ and its resolvent set by $\rho(-\Delta_{p,f})=\mathbb{C}\setminus \sigma(-\Delta_{p,f})$. We define $\Delta_{\infty,f}$ to be the dual operator of $L^1$ so its spectrum is identical to that of $\Delta_{1,f}$.

We will find sufficient conditions on the manifold such that the essential spectrum of $\Delta_{p,f}$ is independent of $p$. Similar to the case of the Laplacian on functions and  differential  forms it will be necessary to assume that the volume of the manifold, with respect to the weighted measure neither decays nor grows exponentially~\cites{sturm,Char1}. Let  $V_f(x,r)$ be  the volume of the geodesic ball of radius $r$ at $x$ with respect to the $e^{-f}dv$ measure. Then
\begin{Def}
We say that the volume of a weighted manifold  $(M^n, g, e^{-f} dv)$ grows uniformly subexponentially, if for any $\eps>0$ there exists a constant $C{(\eps)}$, independent of $x$, such that for any $r>0$ and $x\in M$
\[
V_f(x,r)\leq C{(\eps)} \, V_f(x,1) \, e^{\eps \, r }.
\]
\end{Def}

Define $\phi(x)=V_f(x,1)^{-1/2}$. The assumption on the uniformly subexponential volume growth allows us to prove the following estimate
\begin{lem} \label{VolCom2}
Let $(M,g,e^{-f} dv)$ be a weighted manifold whose volume grows uniformly subexponentially. Then for any $\beta >0$
\[
\sup_{x\in M}  \int_M \phi(x) \, \phi(y) \, e^{-\beta \, d(x,y)} \, e^{-f(y)}dv(y) < \infty.
\]
\end{lem}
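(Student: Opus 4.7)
The plan is to estimate the integral by writing $\phi(x)\phi(y) = \phi(x)^2 \cdot (\phi(y)/\phi(x))$ and then controlling each factor separately using the uniformly subexponential volume growth. The key observation is that, although the ratio $\phi(y)/\phi(x)$ can grow, it grows at most exponentially in $d(x,y)$ with rate as small as we please, which we will trade against the decay factor $e^{-\beta d(x,y)}$.

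First I would establish the pointwise comparison
\[
\phi(y) \leq \sqrt{C(\eps)}\, e^{\eps/2}\, \phi(x)\, e^{\eps d(x,y)/2}
\]
valid for any $\eps>0$. The idea is elementary: setting $s=d(x,y)$, the inclusion $B_x(1)\subset B_y(s+1)$ gives $V_f(y,s+1)\geq V_f(x,1)$, while the uniformly subexponential growth hypothesis gives $V_f(y,s+1)\leq C(\eps)V_f(y,1)e^{\eps(s+1)}$. Combining these produces a lower bound on $V_f(y,1)$, equivalently the desired upper bound on $\phi(y)$.

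Substituting this into the integral and letting $\eps < \beta$, the problem reduces to bounding
\[
\phi(x)^2 \int_M e^{-(\beta-\eps/2)\,d(x,y)}\, e^{-f(y)}\,dv(y).
\]
I would split the integral into dyadic (or unit) annuli $A_k=\{k\leq d(x,y)<k+1\}$, estimate $\int_{A_k} e^{-f}\,dv \leq V_f(x,k+1)$, and apply the uniformly subexponential growth a second time with some $\eps' < \beta-\eps/2$ to obtain $V_f(x,k+1)\leq C(\eps')V_f(x,1)e^{\eps'(k+1)}$. The resulting geometric series converges because $\eps'-(\beta-\eps/2)<0$, yielding a bound of the form $C\cdot V_f(x,1)$. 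Multiplying by $\phi(x)^2 = V_f(x,1)^{-1}$ gives a constant independent of $x$, which is exactly the claim.

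The main subtlety is choosing $\eps$ and $\eps'$ consistently so that both applications of subexponential growth (once to compare $\phi(y)$ with $\phi(x)$, once to bound annular volumes) combine with the exponential decay $e^{-\beta d(x,y)}$ to produce a convergent sum. Once this bookkeeping is done, the uniformity in $x$ is automatic because the constants $C(\eps)$ and $C(\eps')$ in the volume growth hypothesis are themselves independent of $x$, and the factor $V_f(x,1)$ cancels against $\phi(x)^2$.
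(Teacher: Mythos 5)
Your proposal is correct and follows essentially the same route as the paper: the same pointwise comparison $\phi(y)^2\leq C(\eps)\phi(x)^2 e^{\eps(1+d(x,y))}$ obtained from $B_x(1)\subset B_y(1+d(x,y))$ and the subexponential growth hypothesis, followed by the same decomposition into unit annuli, a second application of the volume growth bound, and the cancellation of $V_f(x,1)$ against $\phi(x)^2$. Your bookkeeping with the two parameters $\eps$ and $\eps'$ is, if anything, slightly more careful than the paper's single choice $\eps<\tfrac14\beta$.
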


\begin{proof}
The uniformly subexponential volume growth of $(M, g, e^{-f}dv)$ shows that for each $\eps>0$ there exists a finite constant $C(\eps)$ such that
\[
V_f(x,1)\leq V_f(y, 1+d(x,y))\leq C(\eps) \, V_f(y,1) \, e^{\eps(1+d(x,y))}.
\]
Therefore
\[
\phi(y)^2 \leq \phi(x)^2\, C(\eps)\,  e^{ \eps(1+d(x,y))}.
\]
We get that
\begin{equation*}
\begin{split}
&\sup_{x\in M}  \int_M \phi(x) \, \phi(y) \, e^{-\beta \, d(x,y)} \,  e^{-f(y)}dv(y)\\ & \leq   C'(\eps)\sup_{x\in M}   \, \phi(x)^2  \int_M  e^{-\beta \, d(x,y)} \, e^{\frac 12\eps(1+d(x,y))}\,e^{-f(y)}dv(y)\\
& \leq  C'(\eps) \sup_{x\in M}   \, \phi(x)^2  \sum_{j=0}^{\infty}  e^{-\beta \,j}\,e^{\frac 12 \eps (j+2)} V_f(x,j+1)\\
&\leq   C'(\eps)\sup_{x\in M}    \, \sum_{j=0}^{\infty} C(\eps) \, e^{-\frac 12\beta \,j+ \frac 32 \eps(j+1)} < \infty
\end{split}
\end{equation*}
after choosing $\eps<\frac 14\beta$.
\end{proof}

We will prove the following theorem

\begin{thm} \label{ThmLp}
Let $(M^n,g,e^{-f} dv)$ be a weighted manifold whose volume grows uniformly subexponentially with respect to the measure $e^{-f} dv$. Assume that the $q$-Bakry-\'Emery Ricci curvature  satisfies  $\textup{Ric}_f^q\geq - K$  for some $q>0$ and $K\geq0$. Then the $L^p$ essential spectrum of the drifting Laplacian is independent of $p$  for all $p\in[1,\infty]$.
\end{thm}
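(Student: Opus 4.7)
My plan is to follow Sturm's strategy~\cite{sturm} for $L^p$-independence of the spectrum of a symmetric Markov semigroup generator, adapted to the weighted setting. The semigroup $e^{t\Delta_f}$ is already known to act boundedly on every $L^p$ by Lemma~\ref{LE1}; the crux is to show that for every $\eps > 0$, its $L^p \to L^p$ operator norm is bounded by $C_\eps e^{\eps t}$ uniformly in $p \in [1,\infty]$. Once this subexponential growth is in hand, a standard argument---representing the resolvent as the Laplace transform $(\lambda - \Delta_{p,f})^{-1} = \int_0^\infty e^{-\lambda t} e^{t\Delta_f}\,dt$ for $\mathrm{Re}\,\lambda$ sufficiently large, then analytically continuing and using self-adjointness on $L^2$---shows that the $L^p$-resolvent sets all coincide with $\rho(-\Delta_{2,f})$, yielding the claimed $p$-independence of the spectra.

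To establish the uniform subexponential $L^p$-bound, I would first invoke the Gaussian upper bound of Theorem~\ref{Tb1},
\[
H_f(x,y,t) \leq C\, V_f(x,\sqrt t)^{-1/2}\, V_f(y,\sqrt t)^{-1/2} \exp\!\left[-\frac{d^2(x,y)}{C_4 t} + C_5\sqrt{Kt}\,\right],
\]
together with the volume comparison Lemma~\ref{VolCom} to replace $V_f(\cdot,\sqrt t)^{-1/2}$ by $\phi(\cdot) = V_f(\cdot,1)^{-1/2}$ up to subexponentially growing factors in $t$. By Schur's test it then suffices to estimate $\sup_x \int_M H_f(x,y,t)\, e^{-f(y)}\,dv(y)$ and its symmetric counterpart. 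The Gaussian factor combined with Lemma~\ref{VolCom2} (which is precisely the integrability of $\phi(x)\phi(y) e^{-\beta d(x,y)}$ against the weighted measure) then yields the desired bound of order $C_\eps e^{\eps t}$, uniformly in $p$, after using symmetry of $H_f$ in its spatial arguments and interpolating between $L^1$, $L^2$, and $L^\infty$.

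The principal obstacle is the careful bookkeeping that combines three sources of growth: the factor $e^{C_5\sqrt{Kt}}$ from the heat kernel estimate, the subexponential factors introduced by the volume comparison Lemma~\ref{VolCom} when passing from $V_f(\cdot,\sqrt t)$ to $V_f(\cdot,1)$, and the large-time behavior of the integral in Lemma~\ref{VolCom2}. To absorb all of these into $e^{\eps t}$ for arbitrarily small $\eps > 0$, one must choose the auxiliary parameter inside Lemma~\ref{VolCom2} strictly smaller than $\eps$ and apply elementary inequalities such as $C_5\sqrt{Kt} \leq \tfrac{\eps}{2} t + \tfrac{C_5^2 K}{2\eps}$ to split the $\sqrt{Kt}$ term. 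A secondary technical point is the handling of $p=\infty$: since $L^\infty$ is not separable, strong continuity of the semigroup fails, but this is circumvented by defining $\Delta_{\infty,f}$ as the dual of $\Delta_{1,f}$, as already done in the setup before the statement, so that the spectra of $\Delta_{1,f}$ and $\Delta_{\infty,f}$ coincide by duality.
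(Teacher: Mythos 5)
Your plan founders at the step you describe as ``standard'': passing from a subexponential bound on $\|e^{t\Delta_f}\|_{L^p\to L^p}$ to the coincidence of the $L^p$ resolvent sets. First, the bound you aim to prove is essentially free and carries no information: $e^{t\Delta_f}$ is a submarkovian semigroup, so it is already a contraction on every $L^p(e^{-f}dv)$ (this is the content of Lemma~\ref{LE1}), and likewise $\sup_x\int_M H_f(x,y,t)\,e^{-f(y)}dv(y)\leq 1$, so your Schur-test computation only recovers the contraction property. Second, such a bound cannot imply $p$-independence: on hyperbolic space $\mathbb{H}^n$ the heat semigroup is a contraction on every $L^p$, yet the $L^p$ spectrum of the Laplacian is a $p$-dependent parabolic region strictly larger than the $L^2$ spectrum when $p\neq 2$. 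The Laplace transform $\int_0^\infty e^{-\lambda t}e^{t\Delta_f}\,dt$ converges only for $\mathrm{Re}\,\lambda>0$ and therefore shows only $\sigma(-\Delta_{p,f})\subset\{\mathrm{Re}\,\lambda\geq 0\}$; the analytic continuation of the $L^p$ resolvent into a spectral gap of $-\Delta_{2,f}$ lying inside $[0,\infty)$ is precisely what must be proved, and it is false without further hypotheses.

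The missing idea is the exponentially twisted (Combes--Thomas type) estimate on the resolvent, which is where the uniformly subexponential volume growth actually enters. One conjugates by $e^{\psi}$ with $\psi\in\Psi_\eps$, so $|\nabla\psi|\leq\eps$, and uses Kato's perturbation theorem to obtain a uniform $L^2\to L^2$ bound on $(-e^{\psi}\Delta_{2,f}e^{-\psi}-\xi)^{-1}$ for $\xi$ in a compact subset of $\rho(-\Delta_{2,f})$ (Lemma~\ref{LemKato}); this is then combined with $L^1\to L^2$ and $L^2\to L^\infty$ smoothing bounds for $(-\Delta_{2,f}-\alpha)^{-m/2}$ with $\alpha\ll 0$, which is where the heat kernel estimate of Theorem~\ref{Tb1} is genuinely used. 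Optimizing over $\psi$ yields the pointwise kernel bound $|G_\xi(x,y)|\leq C\phi(x)\phi(y)e^{-\eps d(x,y)}$ for a high power $(-\Delta_{2,f}-\xi)^{-m}$ (Proposition~\ref{ResProp}), and only at this point does Lemma~\ref{VolCom2}, i.e.\ the subexponential volume growth, give boundedness of this power on all $L^p$. One still needs a partial-fractions and Neumann-series argument at a boundary point of $\sigma(-\Delta_{p,f})$ to pass from boundedness of the $m$-th power of the resolvent to boundedness of the resolvent itself, and the reverse inclusion $\sigma(-\Delta_{2,f})\subset\sigma(-\Delta_{p,f})$ uses Calder\'on--Lions interpolation between $L^p$ and its dual exponent. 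Your duality remark for $p=\infty$ is fine, but it does not substitute for any of these steps.
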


Our proof will follow closely to  that of Sturm~\cite{sturm}, which was initially developed by Hempel and Voigt for Schr\"odinger operators on $\mathbb{R}^n$ \cite{HemVo}.

Define the set of functions
\[
\Psi_{\eps}= \{ \psi \in {\mathcal C}_0^{1}(M) \, | \, |\n \psi|\leq \eps  \}.
\]
Observe that  for any pair of points $x, y \in M$, $\sup\{\psi(x)-\psi(y)\, | \, \psi \in  \Psi_{\eps}\} \leq \eps \, d(x,y).$ We will be considering perturbations of the operator $\Delta_{2,f}$ of the form $e^{\psi}\Delta_{2,f} e^{-\psi}$. The following lemma is technical and is straightforward from Kato's result~\cite{Kato}*{Theorem VI.3.9}.

\begin{lem} \label{LemKato}
For any compact subset $W$ of the resolvent set of $\rho(-\Delta_{2,f})$ there exist $\eps>0$ and $C<\infty$ such for all $\xi \in W$ and $\psi\in \Psi_{\eps}$, $\xi$ belongs to the resolvent set of the operator $-e^{\psi}\Delta_{2,f} e^{-\psi}$ and
\[
\|(-e^{\psi}\Delta_{2,f} e^{-\psi}-\xi)^{-1}\|_{L^2\to L^2} \leq C.
\]
\end{lem}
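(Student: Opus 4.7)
The plan is to realize $-e^{\psi}\Delta_{2,f}e^{-\psi}$ via its associated (non-symmetric) sesquilinear form and to show that for $\psi\in\Psi_\eps$ this form is a \emph{small relative form perturbation} of the closed quadratic form of $-\Delta_{2,f}$. The conclusion of the lemma then follows from Kato's stability theorem for closed sectorial forms, i.e.\ \cite{Kato}*{Theorem VI.3.9}.

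First I would carry out the formal computation. Since $\psi\in\mathcal{C}_0^1(M)$, multiplication by $e^{\pm\psi}$ is bounded on $L^2(M,e^{-f}dv)$, and using
\[
\Delta_f(e^{-\psi}u) = e^{-\psi}\bigl[\Delta_f u - 2\n u\cdot\n\psi + u|\n\psi|^2 - u\Delta_f\psi\bigr]
\]
together with the identity $\int uv(\Delta_f\psi)\,e^{-f}dv = -\int \n(uv)\cdot\n\psi\,e^{-f}dv$, one obtains that the natural sesquilinear form on the form domain $H^1(M,e^{-f}dv)$ of $-\Delta_{2,f}$ is
\[
a_\psi(u,v) = \int_M \n u\cdot\n v\, e^{-f}dv + \int_M (v\,\n u - u\,\n v)\cdot\n\psi\, e^{-f}dv - \int_M uv\,|\n\psi|^2 e^{-f}dv,
\]
which agrees with $\langle -e^{\psi}\Delta_{2,f}e^{-\psi}u,v\rangle$ on smooth compactly supported functions. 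This form involves only first derivatives of $\psi$, which is precisely why the set $\Psi_\eps$ is designed to control $|\n\psi|$ rather than $|\psi|$ or $|\Delta_f\psi|$.

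The key estimate is that for $\psi\in\Psi_\eps$ the perturbation $b_\psi := a_\psi - a_0$ obeys
\[
|b_\psi(u,v)| \leq \eps\bigl(\|u\|_2\|\n v\|_2 + \|v\|_2\|\n u\|_2\bigr) + \eps^2 \|u\|_2\|v\|_2 \leq C\,\eps\,\|u\|_{H^1}\,\|v\|_{H^1},
\]
uniformly in $\psi$. Since $a_0$ is nonnegative and closed with form domain $H^1(M,e^{-f}dv)$, this bound implies that, for $\eps$ sufficiently small, $a_\psi$ is itself a closed sectorial form whose relative bound with respect to $a_0$ tends to $0$ as $\eps\to 0$. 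Kato's theorem on the stability of the resolvent under sectorial form perturbations then produces, for every compact $W\subset\rho(-\Delta_{2,f})$, numbers $\eps_0=\eps_0(W)>0$ and $C=C(W)<\infty$ such that for every $\xi\in W$ and every $\psi\in\Psi_{\eps_0}$, the point $\xi$ lies in the resolvent set of the m-sectorial operator $-e^{\psi}\Delta_{2,f}e^{-\psi}$ associated with $a_\psi$, and $\|(-e^{\psi}\Delta_{2,f}e^{-\psi}-\xi)^{-1}\|_{L^2\to L^2}\leq C$.

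The main obstacle is the non-self-adjointness of the perturbed operator together with the fact that $\Psi_\eps$ is \emph{not} bounded in $L^\infty$, so a naive similarity argument $(-e^{\psi}\Delta_{2,f}e^{-\psi}-\xi)^{-1}=e^{\psi}(-\Delta_{2,f}-\xi)^{-1}e^{-\psi}$ fails to yield bounds depending only on $\|\n\psi\|_\infty$. Passing to the sesquilinear form side resolves both issues simultaneously: $a_\psi$ depends on $\psi$ only through $\n\psi$, and the antisymmetric contribution $\int(v\n u - u\n v)\cdot\n\psi\,e^{-f}dv$ is a bounded, lower-order perturbation of the sectorial form $a_0$, placing us squarely within the hypotheses of Kato's perturbation theorem.
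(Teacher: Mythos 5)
Your argument is correct and coincides with the paper's: the paper offers no details, stating only that the lemma "is straightforward from Kato's result [Theorem VI.3.9]," and your proof supplies exactly the intended application — realizing $-e^{\psi}\Delta_{2,f}e^{-\psi}$ through its sesquilinear form, checking that the perturbation depends only on $\n\psi$ and is form-bounded with relative bound $O(\eps)$, and invoking Kato's stability theorem for sectorial forms to get resolvent bounds uniform over compact subsets of $\rho(-\Delta_{2,f})$.
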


The geometric conditions on the manifold stem from the necessity to show the following proposition
\begin{prop} \label{ResProp}
Let $(M^n,g,e^{-f} dv)$ be a weighted manifold such that $\textup{Ric}_f^q\geq - K$ for some $q>0$ and $K\geq 0$.
Then for any $\xi\in \rho(-\Delta_{2,f})$, there exists   a positive integer $N$ such that for any $ N\leq m \leq 2N$,  $(-\Delta_{2,f} -\xi)^{-m}$ has a smooth kernel function $G_\xi(x,y)$ satisfying
\[
|G_\xi(x,y)|\leq C\phi(x)\phi(y) e^{-\eps d(x,y)}
\]
for some $\eps>0$ and $C>1$.

Under the additional assumption that the volume of $M$ grows uniformly subexponentially with respect to the weighted volume $e^{-f} dv$, the operator $(-\Delta_{2,f} -\xi)^{-m}$ is bounded from $L^p$ to $L^p$ for all $1\leq p \leq \infty$.
\end{prop}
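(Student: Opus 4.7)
Following Sturm~\cite{sturm}, the plan is to realize $G_\xi(x,y)$ as a Laplace transform of the drifting heat kernel for $\xi$ sufficiently far to the left of the spectrum, apply the Gaussian upper bound of Theorem~\ref{Tb1} (in the sharper form of Remark~\ref{Rmk24}) together with the Bishop--Gromov-type estimate in Lemma~\ref{VolCom}, and finally invoke Schur's test via Lemma~\ref{VolCom2} for the $L^p$ boundedness.

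First I fix a real number $\xi_0<\lambda_{1,f}(M)$ with $|\xi_0|$ so large that the factor $e^{C_5\sqrt{Kt}}$ from Theorem~\ref{Tb1} is dominated by $e^{(\lambda_{1,f}(M)-\xi_0)t/2}$ for every $t\geq 1$. Lemma~\ref{LE1} together with standard semigroup theory then gives the integral representation
\[
(-\Delta_{2,f}-\xi_0)^{-m} \;=\; \tfrac{1}{(m-1)!}\int_0^\infty t^{m-1}\, e^{\xi_0 t}\, e^{t\Delta_f}\, dt,
\]
so that $G_{\xi_0}^{(m)}(x,y)=\tfrac{1}{(m-1)!}\int_0^\infty t^{m-1}e^{\xi_0 t}H_f(x,y,t)\,dt$ and smoothness follows by dominated convergence. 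Split the integral at $t=1$. On $t\geq 1$, the monotonicity of $V_f(x,\cdot)$ yields $V_f(x,\sqrt t)^{-1/2}\leq\phi(x)$, and splitting the exponent $(\xi_0-\lambda_{1,f})t+C_5\sqrt{Kt}-d(x,y)^2/(C_4 t)$ by AM--GM produces a factor $e^{-\eps d(x,y)}$ times an integrable tail $t^{m-1}e^{-ct}$. On $0<t\leq 1$, the Bishop--Gromov comparison on the warped product $\tilde M_\eps$, passed to the limit $\eps\to 0$ as in the proof of Theorem~\ref{Tb1}, yields $V_f(x,\sqrt t)^{-1/2}\leq C t^{-(n+q)/4}\phi(x)$; the integrand is therefore bounded by $C t^{m-1-(n+q)/2}\phi(x)\phi(y)e^{-d(x,y)^2/(C_4 t)}$, which is integrable near $0$ as soon as $N>(n+q)/2$, and the super-Gaussian factor $e^{-d(x,y)^2/C_4}$ valid on $t\leq 1$ is dominated by $Ce^{-\eps d(x,y)}$. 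This establishes the kernel estimate at $\xi=\xi_0$ for every $m\in[N,2N]$.

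For a general $\xi\in\rho(-\Delta_{2,f})$, join $\xi$ to $\xi_0$ by a finite chain of open disks contained in $\rho$; on each disk Lemma~\ref{LemKato} provides a uniform $L^2$ resolvent bound, and the Neumann expansion
\[
(-\Delta_{2,f}-\xi')^{-m} \;=\; \sum_{k\geq 0}\binom{k+m-1}{m-1}(\xi'-\xi_0')^{k}(-\Delta_{2,f}-\xi_0')^{-(k+m)}
\]
converges in $L^2$ operator norm. Each summand has a kernel of the form furnished by the previous paragraph (applied with $m$ replaced by $k+m$), and the series sums to a kernel of the same exponentially decaying form after enlarging $C$ and shrinking $\eps$; iterating along the chain arrives at the desired $\xi$ with a possibly larger $N$. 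Finally, the uniformly subexponential weighted volume growth combined with Lemma~\ref{VolCom2} gives $\sup_{x\in M}\int_M|G_\xi(x,y)|e^{-f(y)}dv(y)<\infty$ and the symmetric estimate in $y$, so Schur's test yields boundedness of $(-\Delta_{2,f}-\xi)^{-m}$ on $L^p$ for every $p\in[1,\infty]$.

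The main obstacle is the extension step: summing the Neumann series without destroying the pointwise exponential decay requires the kernel estimate for $(-\Delta_{2,f}-\xi_0)^{-j}$ to remain controlled as $j\to\infty$, so that the binomial factors $\binom{k+m-1}{m-1}$ are outweighed; this is where the flexibility in choosing $N\leq m\leq 2N$ together with the specific $t^{j-1}$ weight in the Laplace transform enters. A secondary subtlety is justifying the uniform-in-$x$ Bishop--Gromov comparison on $\tilde M_\eps$ in the small-$t$ regime, which calls for an appropriate choice of $\eps$ inside a localization argument of the kind used in the proof of Theorem~\ref{Tb1}.
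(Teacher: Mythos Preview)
Your treatment of the Laplace-transform step at the negative base point $\xi_0$ (equivalently the paper's $\alpha$) is essentially the paper's own argument, carried out through Corollary~\ref{CorlT2b}. The Schur-test conclusion via Lemma~\ref{VolCom2} is also the same as in the paper.

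The genuine gap is the passage from $\xi_0$ to an arbitrary $\xi\in\rho(-\Delta_{2,f})$. You propose to propagate the pointwise kernel bound along a chain of disks by summing the Neumann series
\[
\sum_{k\geq 0}\binom{k+m-1}{m-1}(\xi'-\xi_0')^{k}\,G_{\xi_0'}^{(k+m)}(x,y),
\]
but convergence of this series \emph{with the weighted-Gaussian decay preserved} forces $|\xi'-\xi_0'|$ to stay strictly below the exponential rate appearing in the large-$t$ tail of the Laplace transform; that rate is fixed by $K$ and the desired decay parameter $\beta_1$, and it cannot be made to exceed $\text{Re}\,\xi$ once $\xi$ lies to the right of $0$. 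Iterating along a chain therefore only buys you a bounded total arclength in the pointwise estimates, so for $\xi$ deep in a spectral gap the chain cannot close. The $L^2$ bound from Lemma~\ref{LemKato} that you invoke controls operator-norm convergence but does not by itself rescue the pointwise kernel series, which is exactly the obstacle you flag but do not resolve.

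The paper sidesteps this completely. Instead of Neumann expansion it uses the \emph{finite} resolvent identity (commutativity of resolvents of a single self-adjoint operator)
\[
(-\Delta_{2,f}-\xi)^{-m}=\sum_{j=0}^{m}\binom{m}{j}(\xi-\alpha)^{j}\,(-\Delta_{2,f}-\alpha)^{-m/2}\,(-\Delta_{2,f}-\xi)^{-j}\,(-\Delta_{2,f}-\alpha)^{-m/2},
\]
conjugated by $e^{\psi}$ with $\psi\in\Psi_\eps$. The two outer factors are handled as $L^1\to L^2$ and $L^2\to L^\infty$ maps directly from the heat-kernel bound (your first paragraph), while the middle factor $e^{\psi}(-\Delta_{2,f}-\xi)^{-j}e^{-\psi}$ needs only an $L^2\to L^2$ bound, supplied for every $\xi\in\rho(-\Delta_{2,f})$ and every bounded $j\leq m$ by Lemma~\ref{LemKato}. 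The resulting uniform $L^1\to L^\infty$ bound on $\phi^{-1}e^{\psi}(-\Delta_{2,f}-\xi)^{-m}e^{-\psi}\phi^{-1}$, taken over $\psi\in\Psi_\eps$, is equivalent to the asserted kernel decay. This is the missing idea: rather than analytically continue the kernel estimate, one sandwiches the $\xi$-resolvent between two $\alpha$-resolvents so that the $\xi$-dependence is confined to an $L^2\to L^2$ piece where Kato perturbation applies globally on $\rho(-\Delta_{2,f})$.
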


The proof of Proposition \ref{ResProp} requires the following  upper bound estimate for the heat kernel of the drifting Laplacian, which follows from Lemma \ref{VolCom} and Theorem \ref{Tb1}
\begin{corl} \label{CorlT2b}
Let $(M^n,g,e^{-f} dv)$ be a weighted manifold such that   $\textup{Ric}_f^q\geq -K$ for some positive integer $q$ and $K\geq 0$. Then for any $\beta_1 >0$ there exists a negative number $\alpha$ and constant $C(n+q, K,\beta_1)$ such that
\begin{equation} \label{heat42}
H_f(x,y,t)  \leq  C \, \phi(x)^2\, \sup\{t^{-(n+q)/2},1\} \,e^{-\beta_1 \, d(x,y)}\,
 e^{- (\alpha +1) t}.
\end{equation}
\end{corl}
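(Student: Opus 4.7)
The plan is to combine the Gaussian-type upper bound of Theorem~\ref{Tb1} with the volume comparison of Lemma~\ref{VolCom} to arrive at the claimed form. Because $\textup{Ric}_f^q\ge -K$ holds on all of $M$, the global version of Theorem~\ref{Tb1} applies and yields
\[
H_f(x,y,t)\le C_3\,V_f^{-1/2}(x,\sqrt t)\,V_f^{-1/2}(y,\sqrt t)\exp\Bigl[-\lambda_{1,f}(M)\,t-\frac{d^2(x,y)}{C_4\,t}+C_5\sqrt{Kt}\Bigr]
\]
for all $x,y\in M$ and $t>0$. Three conversions remain: (i) downgrade the Gaussian $d^2/t$ decay to a pure exponential $-\beta_1 d(x,y)$; (ii) replace each $V_f^{-1/2}(\cdot,\sqrt t)$ factor by $\phi(\cdot)\,\sup\{t^{-(n+q)/4},1\}$; and (iii) symmetrize so that both prefactors become $\phi(x)$.

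For (i) I would use the elementary Young-type bound
\[
\frac{d^2(x,y)}{C_4\,t}\ge \beta_1\, d(x,y)-\frac{C_4\beta_1^{2}}{4}\,t,
\]
which converts the Gaussian into $e^{-\beta_1 d(x,y)}$ at the cost of an extra $O(t)$ term in the exponent. For (ii), when $\sqrt t\ge 1$ monotonicity $V_f(x,\sqrt t)\ge V_f(x,1)$ gives $V_f^{-1/2}(x,\sqrt t)\le\phi(x)$ immediately. When $\sqrt t<1$, Lemma~\ref{VolCom} does not directly apply, but the underlying Bishop-Gromov comparison on the warped products $\tilde M_\eps$ holds for all radii; passing to the collapsing limit $\eps\to 0$ exactly as in the proof of Lemma~\ref{VolCom} yields $V_f(x,1)\le C(n+q,K)\,t^{-(n+q)/2}\,V_f(x,\sqrt t)$, hence $V_f^{-1/2}(x,\sqrt t)\le C\,t^{-(n+q)/4}\,\phi(x)$. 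Combining the two cases, $V_f^{-1/2}(x,\sqrt t)\le C\sup\{t^{-(n+q)/4},1\}\,\phi(x)$, and likewise at $y$.

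For (iii), I would invoke Lemma~\ref{VolCom} with $r=1$, $R=1+d(x,y)$, centered at $y$. Since $B_x(1)\subset B_y(1+d(x,y))$,
\[
\phi(y)^{2}=\frac{1}{V_f(y,1)}\le \frac{e^{C\sqrt K\,(1+d(x,y))+C'}}{V_f(y,1+d(x,y))}\le \frac{1}{V_f(x,1)}\,e^{C\sqrt K\,(1+d(x,y))+C'}=\phi(x)^{2}\,e^{C\sqrt K\,(1+d(x,y))+C'},
\]
so $\phi(x)\phi(y)$ is controlled by $\phi(x)^{2}$ times an extra factor $e^{(C\sqrt K/2)d(x,y)}$. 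Since the statement permits any $\beta_1>0$, this extra exponential is harmlessly absorbed by replacing $\beta_1$ at the outset by any $\beta_1'>\beta_1+C\sqrt K/2$.

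Putting everything together, the time-exponent becomes $-\lambda_{1,f}(M)t+C_5\sqrt{Kt}+\tfrac{C_4\beta_1^{2}}{4}t$; using $C_5\sqrt{Kt}\le\tfrac12(K+C_5^{2}t)$ absorbs the additive constant $K/2$ into the multiplicative prefactor and leaves a single linear-in-$t$ term, which is bounded above by $-(\alpha+1)t$ for any $\alpha$ sufficiently negative. The main obstacle will be the small-time regime $\sqrt t<1$, since Lemma~\ref{VolCom} is stated only for $r\ge 1$: one must either quote or redo the warped-product Bishop-Gromov comparison for small radii and carefully track the $\eps\to 0$ limit so that the constant depends only on $n+q$ and $K$. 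Everything else reduces to elementary manipulations of the exponent.
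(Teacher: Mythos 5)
Your proposal is correct and follows essentially the same route as the paper: the Young-type trade of the Gaussian for $e^{-\beta_1 d(x,y)}$ at the cost of a linear-in-$t$ term, the volume comparison to convert each $V_f^{-1/2}(\cdot,\sqrt t)$ into $\phi(\cdot)\sup\{t^{-(n+q)/4},1\}$, and the symmetrization $\phi(y)^2\le C\,\phi(x)^2 e^{\bar C\sqrt K(1+d(x,y))}$ absorbed by taking the Gaussian-to-exponential parameter large enough. The small-radius issue you flag is genuine but minor --- the paper simply invokes Lemma~\ref{VolCom} with $r<1$ despite its stated hypothesis $1\le r<R$ --- and your remedy (the Bishop--Gromov comparison on $\tilde M_\eps$ holds for all radii, so the collapsing argument goes through unchanged) is exactly what is needed.
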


\begin{proof}
For $r<1$, Lemma \ref{VolCom} gives
\[
V_f(x,r)^{-1} \leq \phi(x)^2 \, r^{-(n+q)} \, C(n+q, K),
\]
whereas
\[
V_f(x,r)^{-1} \leq \phi(x)^2
\]
holds for any $r\geq1$. Lemma~\ref{VolCom} also yields
\[
\phi(y)^2 \leq C\,\phi(x)^2\,  e^{\bar C\, \sqrt{K}\,(1+d(x,y))}
\]
for all $x, y \in M$ and constants $C,\bar C$ that only depend on $n+q$.  At the same time the exponential function satisfies the elementary inequality
\[
e^{-d^2/4ct} \leq e^{-\beta_2 \, d} \, e^{c\, \beta_2^2 t}
\]
for any $\beta_2 \in \mathbb{R}$. Combining the above estimates with \eref{heat41-new} we get
\begin{equation*}
\begin{split}
&H_f(x,y,t)  \leq \;  C \, \phi(x)^2\, \sup\{t^{-(n+q)/2},1\} \; \\
&\cdot \, \exp[ -\lambda_{1,f}(M) \, t +\bar C\, \sqrt{K}(1+d(x,y)) - \beta_2 d(x,y)  + C_5 \sqrt{K \,t}  + \frac 14 \beta_2^2 C_4 \,t \,]\cdot{e^{-\alpha t}}.
\end{split}
\end{equation*}
Choosing $\beta_2$ such that $\beta_1= - \bar C\, \sqrt{K} + \beta_2$ is any positive number, we get the result with $\alpha=-1 -C_5 \sqrt{K} - \frac 14 \beta_2^2 C_4 .$
\end{proof}

We are now ready to prove  Proposition \ref{ResProp}.
\begin{proof}[Proof of Proposition {\rm\ref{ResProp}}]
Without loss of generality we take $q$ to be a positive integer.  We note that $(-\Delta_{2,f} -\xi)^{-m}$ has an integral kernel whenever it is bounded from $L^1$ to $L^\infty$. To prove the upper bound for the integral kernel in the proposition, it suffices to show that for some positive integer $N$, whenever $N\leq m\leq 2N$ and $\eps>0$,  the perturbed operator $\phi^{-1} \, e^{\psi} (-\Delta_{2,f} -\xi)^{-m} e^{-\psi} \phi^{-1}$ is bounded from $L^1$ to $L^{\infty}$ for all $\psi\in \Psi_\eps$.

We use the resolvent equation to rewrite this perturbed operator  as
\begin{equation}
\begin{split}
\phi^{-1} \, e^{\psi} (- & \Delta_{2,f} -\xi)^{-m} e^{-\psi} \phi^{-1}= \\
&=
\sum_{j=1}^m \left(
\begin{array}{c}
m \\
j \\
\end{array}
\right)
(\xi-\alpha)^j[\phi^{-1} \, e^{\psi} (-\Delta_{2,f} -\alpha)^{-m/2} e^{-\psi}]\\
& \qquad \cdot [e^{\psi} (-\Delta_{2,f} -\xi)^{-1} e^{-\psi}] \; [ e^{\psi} (-\Delta_{2,f} -\alpha)^{-m/2} e^{-\psi}\phi^{-1}].
\end{split}
\end{equation}
The perturbed operator will be bounded from $L^1$ to $L^\infty$  if we can show that
\begin{align*}
& \|e^{\psi} (-\Delta_{2,f} -\alpha)^{-m/2} e^{-\psi}\phi^{-1}\|_{L^1 \to L^2} \leq C \tag{a}\\
& \|e^{\psi} (-\Delta_{2,f} -\xi)^{-1} e^{-\psi}\|_{L^2 \to L^2} \leq C \tag{b}\\
& \|\phi^{-1} \, e^{\psi} (-\Delta_{2,f} -\alpha)^{-m/2} e^{-\psi}\|_{L^2 \to L^\infty} \leq C  \tag{c}
\end{align*}
for $m\geq N$.

Estimate (b) is Lemma \ref{LemKato} and determines the constant $\eps$.

Estimates (a) and (c) require the drifting heat kernel estimates. Given that $\alpha$ is a negative real number,  the operator  $(-\Delta_{2,f} -\alpha)^{-m/2}$ is given by
\[
(-\Delta_{2,f} -\alpha)^{-m/2} = C_m \int_0^\infty e^{t\Delta_{2,f}} \, t^{\frac{m}{2}-1} \, e^{\alpha t} \,dt
\]
where $C_m$ is a constant that depends only on $m$. Since $e^{t\Delta_{2,f}}$ has an integral kernel (which is the heat kernel), we observe that $(-\Delta_{2,f} -\alpha)^{-m/2}$ also has an integral kernel, $G^{m/2}_\alpha(x,y),$ whenever the right side of the above equation is integrable with respect to $t$. In particular, from equation \eref{heat42} of Corollary \ref{CorlT2b} for any $\beta>0$ and $m>n+q+2$, there exists an $\alpha<0$ such that
\[
|G^{m/2}_\alpha(x,y)| \leq C(n+q, K,\beta)\,\phi(x)^2 e^{-\beta\, d(x,y)}.
\]

As a result, $\phi^{-1} \, e^{\psi} (-\Delta_{2,f} -\alpha)^{-m/2} e^{-\psi}$ also has an integral kernel such that for any function $g\in L^2$ we get
\begin{equation}
\begin{split}
\| \phi^{-1} \, e^{\psi} (-&\Delta_{2,f} -\alpha)^{-m/2} e^{-\psi} g \|_{L^\infty}=\\
&= \sup_{y} \phi(y)^{-1} \, e^{\psi(y)} \bigl| \int_M G^{m/2}_\alpha(x,y) e^{-\psi(x)} \, g(x) \, e^{-f(x)}dv(x) \bigr|\\
&\leq \sup_{y} \bigl| \int_M \phi(y)^2  e^{2(\eps-\beta)d(x,y)}  \, e^{-f(x)}dv(x) \bigr|^{1/2}\, \|g(x)\|_{L^2} \\
&\leq C \|g(x)\|_{L^2}
\end{split}
\end{equation}
after choosing $\beta>0$ large enough, given the volume comparison result of Lemma \ref{VolCom}. Part (c) follows by taking adjoints. The upper bound on $ G^{m}_\xi(x,y)$  follows.

By Lemma \ref{VolCom2}, we know that  $(-\Delta_{2,f} -\xi)^{-m}$ is a bounded operator on $L^1(M)$. Since it is bounded  on  $L^2(M)$ by the definition of $\xi$, the interpolation theorem implies that the operator is bounded on $L^p(M)$ for any $1\leq p\leq 2$.  By taking adjoints, the operator is bounded for any $p\geq 2$. Finally,   the operator  is bounded on $L^\infty$ by our definition of the $L^\infty$ spectrum.
\end{proof}

\begin{proof}[Proof of Theorem {\rm \ref{ThmLp}}]
We begin by first proving the inclusion $\sigma(-\Delta_{p,f})\subset \sigma(-\Delta_{2,f})$. This is the more difficult one to achieve, since the reverse inclusion could be true in a more general setting. Proposition  \ref{ResProp} implies that  for any  $N\leq m\leq 2N$,
the operator $(-\Delta_{2,f} -\xi)^{-m}$  is  bounded on $L^p$ for all $p\in[1,\infty]$ and for all $\xi \in \rho(\Delta_{2,f})$. By the uniqueness of the  extension\footnote{We need to argue that $\xi$ can be path connected to a negative number, which is true since the spectrum of $\Delta_{2,f}$ is contained in the real line. For the details of this argument we refer the interested reader to the articles by Hempel and Voigt \cites{HemVo,HemVo2} (see also \cites{sturm,Char1}).} we get that $(-\Delta_{p,f} -\xi)^{-m}= (-\Delta_{2,f} -\xi)^{-m}$ for all $\xi \in \rho(-\Delta_{2,f})$. In what follows, we fix the number $N$.

If the inclusion $\sigma(-\Delta_{p,f})\subset \sigma(-\Delta_{2,f})$ is not true, since $\sigma(\Delta_{2,f})\subset [0,\infty)$, then there is a $\xi$ on the boundary of $\sigma(-\Delta_{p,f})$  which is not in $\sigma(-\Delta_{2,f})$. For any $\eps>0$, within an $\eps$ neighborhood of $\xi$, we can find $m-1$ different complex numbers $\xi_j$ such that
 $\xi_j\in \rho(-\Delta_{p,f})$. If $\eps$ is sufficiently small, using the Taylor expansion of $(-\Delta_{p,f}-\xi)^{-m}$, we have
  \begin{align}\label{expansion}
  \begin{split}
& \left((-\Delta_{p,f}-\xi)(-\Delta_{p,f}-\xi_1)\cdots (-\Delta_{p,f}-\xi_{m-1})\right)^{-1}\\
&=\sum_{k_1,\cdots, k_{m-1}\geq 0} (-\Delta_{p,f}-\xi)^{-m-k_1-\cdots-k_{m-1}}(\xi_1-\xi)^{k_1}\cdots(\xi_{m-1}-\xi)^{k_{m-1}}. \end{split} \end{align}
By Proposition~\ref{ResProp}, for any $k\geq N$, we have
\[
\|(-\Delta_{p,f}-\xi)^{-k}\|_{L^p\to L^p}= \|(-\Delta_{2,f}-\xi)^{-k}\|_{{L^p\to L^p}}\leq C{^{k/N}}.
\]
Therefore the expansion in~\eqref{expansion} is convergent and hence the operator is bounded on  $L^p$.
Using the partial fraction method, we can find non-zero complex numbers $c_j$ ($0\leq j\leq m-1$) such that
 \begin{align*}
 &
  \left((-\Delta_{p,f}-\xi)(-\Delta_{p,f}-\xi_1)\cdots (-\Delta_{p,f}-\xi_{m-1})\right)^{-1}\\
  &
  =c_0(-\Delta_{p,f}-\xi)^{-1}+\sum_j c_j(-\Delta_{p,f}-\xi_j)^{-1}.
  \end{align*}
 Therefore $(-\Delta_{p,f}-\xi)^{-1}$ is bounded on $L^p$, because all the other operators are bounded on $L^p$.

We will now show that $\sigma(-\Delta_{p,f})\supset \sigma(-\Delta_{2,f})$. Let $p$ and $q$ to be dual orders such that $p^{-1}+q^{-1}=1$ and observe that by duality $\rho(-\Delta_{p,f})=\rho(-\Delta_{q,f})$ (in other words, $\xi\in \rho(-\Delta_{p,f})$ if and only if $\xi\in\rho(-\Delta_{q,f})$. Therefore $(-\Delta_{p,f}-\xi)^{-1}$  and $(-\Delta_{q,f}-\xi)^{-1}$ are bounded operators on $L^p$ and $L^q$, respectively for all $\xi\in \rho(-\Delta_{p,f})$. Using the Calder\'on Lions Interpolation Theorem  \cite{ReSiII}*{Theorem IX.20}, Hempel and Voigt and Sturm prove that that the interpolated operator on $L^2$ is $(-\Delta_{2,f}-\xi)^{-1}$  \cites{HemVo2,sturm}. As a result, $(-\Delta_{2,f}-\xi)^{-1}$ is bounded on $L^2$ and $\xi\in \rho(-\Delta_{2,f})$.

 \end{proof}

\begin{corl}
In the notation above, for any $p\in[1,\infty]$, we have that the $L^p$ essential spectrum of $\Delta_\eps$ on $\tilde M_\eps$ contains the $L^p$ essential spectrum of $\Delta_f$. In particular, if the essential spectrum of $\Delta_f$ on $M$ is $[0,\infty)$, then so is the essential spectrum of $\Delta_\eps$ on $\tilde M_\eps$ for any $\eps$.

\end{corl}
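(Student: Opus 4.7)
The plan is to realize $L^p(M,e^{-f}dv)$, up to a scalar, as the $SO(q+1)$-invariant subspace of $L^p(\tilde M_\eps)$, and to argue that the resolvent of $-\Delta_{\eps,p}$ preserves this subspace and reduces there to the resolvent of $-\Delta_{f,p}$. Since the warping factor $\eps^2 e^{-2f/q}$ depends only on the $M$-coordinate, the natural $SO(q+1)$ action rotating the $S^q$ fiber is isometric on $\tilde M_\eps$, hence commutes with $\Delta_\eps$ and with any resolvent of $-\Delta_{\eps,p}$.

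First I would define $\iota: L^p(M,e^{-f}dv)\to L^p(\tilde M_\eps)$ by $(\iota u)(x,\omega)=u(x)$. Using $dv_\eps=\eps^q e^{-f}\,dv_M\,d\omega_{S^q(1)}$, one obtains $\|\iota u\|_{L^p(\tilde M_\eps)}^p=\eps^q\,\vol(S^q(1))\,\|u\|_{L^p(M,e^{-f}dv)}^p$, so $\iota$ is a scalar multiple of a Banach space isomorphism onto its image. By the decomposition \eqref{lap} and the fact that $\Delta_{S^q}$ annihilates functions constant along the fibers, $\Delta_\eps(\iota u)=\iota(\Delta_f u)$. Now fix $\lambda\in\rho(-\Delta_{\eps,p})$ and $v\in L^p(M,e^{-f}dv)$, and set $\tilde v=\iota v$, $\tilde u=(-\Delta_{\eps,p}-\lambda)^{-1}\tilde v$. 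Each $A\in SO(q+1)$ acts isometrically on $\tilde M_\eps$, so its pullback $A^*$ commutes with the resolvent; since $A^*\tilde v=\tilde v$, this forces $A^*\tilde u=\tilde u$. Transitivity of $SO(q+1)$ on $S^q$ then yields $\tilde u=\iota u$ for a unique $u\in L^p(M,e^{-f}dv)$, and stripping $\iota$ from $(-\Delta_\eps-\lambda)\iota u=\iota v$ gives $(-\Delta_f-\lambda)u=v$ with $\|u\|_{L^p(M,e^{-f}dv)}\le\|(-\Delta_{\eps,p}-\lambda)^{-1}\|\cdot\|v\|_{L^p(M,e^{-f}dv)}$. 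Hence $\lambda\in\rho(-\Delta_{f,p})$, proving $\sigma(-\Delta_{f,p})\subset\sigma(-\Delta_{\eps,p})$.

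For the ``in particular'' statement, the main inclusion gives $[0,\infty)\subset\sigma(-\Delta_{\eps,p})$; the reverse inclusion is immediate for $p=2$ from nonnegative self-adjointness of $-\Delta_{\eps,2}$, and for general $p$ one would invoke an analogue of Theorem~\ref{ThmLp} on $\tilde M_\eps$, whose curvature and volume growth hypotheses can be verified from Proposition~\ref{prop21} and those assumed on $M$. The main technical obstacle will be the $SO(q+1)$-invariance step: one must verify that each isometry induces a strongly continuous pullback on $L^p(\tilde M_\eps)$ that genuinely commutes with $-\Delta_{\eps,p}$ on the domain level, and that $SO(q+1)$-invariance in $L^p$ cleanly identifies the invariant subspace with $\iota L^p(M,e^{-f}dv)$; the latter follows from the disintegration of $dv_\eps$ along the projection $\tilde M_\eps\to M$ combined with transitivity on each $S^q$ fiber.
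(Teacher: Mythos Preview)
Your approach is essentially the paper's: both identify $L^p(M,e^{-f}dv)$ with the closed subspace $\mathfrak B\subset L^p(\tilde M_\eps)$ of functions constant along the $S^q$ fiber, on which $\Delta_\eps$ reduces to $\Delta_f$ via \eqref{lap}. The paper simply asserts that the essential spectrum of $\Delta_\eps$ restricted to $\mathfrak B$ coincides with that of $\Delta_f$ and hence lies inside $\sigma_{ess}(-\Delta_{\eps,p})$; you supply the step the paper leaves implicit, namely that the \emph{resolvent} of $-\Delta_{\eps,p}$ preserves $\mathfrak B$, by exploiting the isometric $SO(q+1)$ action on the fibers. That is a genuine addition, since invariance of a closed subspace under an unbounded operator does not automatically pass to its resolvent without something like a commuting projection, which your averaging over $SO(q+1)$ provides.

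One caution on the ``in particular'' clause for $p\neq 2$: your plan to invoke an analogue of Theorem~\ref{ThmLp} on $\tilde M_\eps$ requires $\widetilde{\mathrm{Ric}}$ to be globally bounded below, but by Proposition~\ref{prop21} the fiber components $\widetilde{\mathrm{Ric}}_{\alpha\beta}$ contain the term $-q^{-1}e^f\Delta e^{-f}$, which need not be bounded below on all of $M$ without extra hypotheses on $f$; this is precisely why the paper only ever uses the warped products \emph{locally}. The paper's own proof does not address this point either, and for $p=2$ the reverse inclusion $\sigma_{ess}(-\Delta_{\eps,2})\subset[0,\infty)$ is immediate from nonnegative self-adjointness, so the conclusion is unproblematic there.
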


\begin{proof}
Let $\mathfrak B$ be the Banach  space of real-valued functions on $\tilde M_\eps$ which are constants along the fiber and belong to $L^p(\tilde M_\eps)$. Then $\mathfrak B$ is isomorphic to $L^p(M, e^{-f}dv)$. Moreover, by  \eref{lap},  $\Delta_\eps$ can be identified to $\Delta_f$ under this isomorphism. Therefore, when restricted to $\mathfrak B$, the essential spectrum of $\Delta_\eps$ is the same  as that of $\Delta_f$ on $M$.
\end{proof}

\section{Manifolds Where the Essential Spectrum of $\Delta_f$ is $[0,\infty)$}\label{sec6}

In this section we will compute  the $L^p$ essential spectrum of the drifting Laplacian under adequate curvature conditions on the manifold.

As before we fix a point $x_o\in M$ and let $r(x)=d(x,x_o)$ be the radial distance to the point $x_o$. Then $r(x)$ is a continuous Lipschitz function and $|\n r|=1$ almost everywhere. The cut locus of $x_o\in M$ is a set of measure zero, denoted ${\rm Cut}(x_o)$. Then $M= S\cup {\rm Cut}(x_o) \cup \{x_o\} $ where $S$ is a star-shaped domain on which $r$ is  smooth.

We will prove the following theorem
\begin{thm} \label{thm62}
Let $(M^n,g,e^{-f} dv)$ be a weighted manifold. Suppose that for some $q>0$, the $q$-Bakry-\'Emery Ricci tensor ${\rm Ric}_f^q$  is asymptotically nonnegative in the radial direction with respect to a fixed point $x_o$ (cf. Definition~\ref{defnonnegative}). If the weighted volume of the manifold is finite, we additionally assume that its volume  does not decay exponentially at $x_o$.  Then the $L^2$ essential spectrum of the drifting Laplacian is $[0,\infty)$.
\end{thm}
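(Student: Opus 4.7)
My plan is to apply the classical Weyl criterion for the essential spectrum. Since $-\Delta_f$ is a nonnegative self-adjoint operator on $L^2(M,e^{-f}dv)$, one automatically has $\sigma_{\mathrm{ess}}(-\Delta_f)\subset[0,\infty)$. The nontrivial inclusion is that every $\lambda\ge 0$ lies in $\sigma_{\mathrm{ess}}(-\Delta_f)$. For this I would construct, for each such $\lambda$, a sequence of compactly supported functions $\{u_k\}$ with $\|u_k\|_2=1$, $u_k\rightharpoonup 0$ weakly, and $\|(-\Delta_f-\lambda)u_k\|_2\to 0$.

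The natural candidates are approximate radial plane waves. Fix a smooth cutoff $\chi:\R\to[0,1]$ supported in $[0,3]$ with $\chi\equiv 1$ on $[1,2]$, and for sequences of radii $R_k\to\infty$ and scales $L_k\to\infty$ (to be chosen later) set
\[
u_k(x)=a_k\,\chi_k(r(x))\cos(\sqrt{\lambda}\,r(x)),\qquad \chi_k(r):=\chi\!\left(\tfrac{r-R_k}{L_k}\right),
\]
where $r(x)=d(x,x_o)$ and $a_k$ is the $L^2$ normalization constant; for $\lambda=0$ we replace $\cos(\sqrt{\lambda}r)$ by $1$. Away from the cut locus $r$ is smooth and $|\n r|=1$, so a direct computation using the identity $\Delta_f(\phi(r))=\phi''(r)+\phi'(r)\Delta_f r$ yields
\[
(-\Delta_f-\lambda)u_k=a_k\Bigl\{-\chi_k''\cos-2\sqrt{\lambda}\,\chi_k'\sin+\bigl(\sqrt{\lambda}\,\chi_k\sin-\chi_k'\cos\bigr)\Delta_f r\Bigr\},
\]
with $\cos,\sin$ evaluated at $\sqrt{\lambda}r$. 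The two cutoff terms are supported in the annular regions where $\chi_k'\ne 0$ and contribute $O(L_k^{-2})$ to $\|(-\Delta_f-\lambda)u_k\|_2^2/\|u_k\|_2^2$; the drift term is controlled in sup norm by $\sup_{r\ge R_k}|\Delta_f r|$, which tends to zero by Lemma~\ref{L1}.

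To conclude that the Weyl quotient vanishes, one needs the weighted mass of the middle annulus $\{R_k+L_k\le r\le R_k+2L_k\}$ to dominate (or at least be comparable to) the mass of the boundary annuli where $\chi_k'\ne 0$. In the infinite volume case, a volume comparison argument using the asymptotic non-negativity of $\textup{Ric}_f^q$ (via the method of Lemma~\ref{VolCom} adapted to the asymptotic setting) allows us to choose $R_k,L_k$ with the middle mass dominating. In the finite volume case the hypothesis that the weighted volume does not decay exponentially at $x_o$ is precisely what is needed to exhibit a sequence of annuli whose weighted volumes decay at most subexponentially, so that the middle-to-boundary mass ratio stays bounded below. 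Weak convergence $u_k\rightharpoonup 0$ is then immediate, since the supports drift off to infinity.

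The main obstacle is twofold. First, one must justify the pointwise computation of $\Delta_f u_k$ across the cut locus of $x_o$: I would use the distributional version of the Laplacian comparison in Lemma~\ref{L1}, together with the fact that the cut locus has measure zero, so the weighted Green identity produces no surface contribution and the distributional upper bound on $\Delta_f r$ suffices for the $L^2$ estimate. Second, one must simultaneously balance $(R_k,L_k)$ so that $\sup_{r\ge R_k}|\Delta_f r|$ is small on the support while the volume ratio remains controlled; this delicate pairing is made possible by Lemma~\ref{L1} together with the volume hypothesis, following the scheme developed for the Laplace--Beltrami case in~\cite{char-lu-2}.
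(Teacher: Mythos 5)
Your overall scheme---radial quasimodes $\chi_k(r)\cos(\sqrt{\lambda}\,r)$ supported on distant annuli, with the cutoff error of order $L_k^{-2}$ and the remaining error coming from $\Delta_f r$---is the same skeleton the paper uses (it reduces Theorem~\ref{thm62} to Theorem~\ref{thm63} via Lemma~\ref{L1} and then follows the construction of \cite{char-lu-2}). But there is a genuine gap at the decisive step. You control the drift term ``in sup norm by $\sup_{r\ge R_k}|\Delta_f r|$, which tends to zero by Lemma~\ref{L1}.'' Lemma~\ref{L1} gives only the \emph{one-sided} bound $\limsup_{r\to\infty}\Delta_f r\le 0$ in the sense of distributions; it gives no lower bound on $\Delta_f r$ whatsoever. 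In general $\Delta_f r$ can be arbitrarily negative on sets of positive measure far from $x_o$, and its distributional part on the cut locus is a negative singular measure, so $\Delta_f u_k$ need not even lie in $L^2$. Consequently $\sup_{r\ge R_k}|\Delta_f r|\to 0$ is false as stated, and the classical $L^2$ Weyl criterion you invoke cannot be closed this way. This is not a technicality one can wave away with ``the cut locus has measure zero'': the whole difficulty of the theorem lives in this term.

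The paper's route around this is the content of Lemma~\ref{DeltarVolEst}: one first replaces $r$ by a smooth approximation $\tilde r$ (Proposition 3.1 of \cite{char-lu-2}), then integrates $\Delta_f\tilde r$ over annuli and uses the divergence theorem to bound $\int\Delta_f\tilde r$ (no absolute value) by boundary terms; combined with the one-sided upper bound this yields an $L^1$ control
$\int_{B(r_2)\setminus B(r_1)}|\Delta_f\tilde r|\le \eps\,V_f(r_2+1)+2$ (and the analogous finite-volume estimate, where the non-exponential-decay hypothesis makes the $\vol(\pa B(r_2))$ term harmless). Since only an $L^1$ bound relative to the weighted volume is available, the argument must use the generalized Weyl-type criterion of \cite{char-lu-2}, which accepts test functions with this weaker control, rather than the classical $L^2$ criterion. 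Your volume-balancing discussion for choosing $(R_k,L_k)$ is in the right spirit and matches the role of Lemma~\ref{DeltarVolEst}, but as written the proof does not go through until the pointwise sup bound on $|\Delta_f r|$ is replaced by this integral estimate and the Weyl criterion is weakened accordingly.
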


By Lemma~\ref{L1}, the theorem is implied by the following more general result

\begin{thm} \label{thm63}
Let $(M^n,g,e^{-f} dv)$ be a weighted manifold. Suppose that, with respect to a fixed point $x_o$, the radial function $r(x)=d(x,x_o)$ satisfies
\begin{equation*}
{\limsup_{r\to \infty}}\; \Delta_f r\leq 0
\end{equation*}
in the sense of distribution. If the weighted volume of the manifold is finite, we additionally assume  that its volume  does not decay exponentially at $x_o$.  Then the $L^2$ essential spectrum of the drifting Laplacian  is $[0,\infty)$.
\end{thm}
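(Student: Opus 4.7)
The plan is to establish the two inclusions $\sigma(-\Delta_{2,f})\subseteq[0,\infty)$ and $[0,\infty)\subseteq\sigma(-\Delta_{2,f})$ separately. The first is immediate from the nonnegativity of $-\Delta_{2,f}$ as a self-adjoint operator on $L^2(M,e^{-f}dv)$. For the reverse inclusion I apply Weyl's criterion: for each fixed $\lambda\geq 0$, I produce a sequence $\{\phi_k\}\subset\mathrm{Dom}(\Delta_{2,f})$ of compactly supported functions with mutually disjoint supports exiting every compact set, satisfying $\|\phi_k\|_2=1$ and $\|(-\Delta_f-\lambda)\phi_k\|_2\to 0$. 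The disjoint-support condition automatically gives orthogonality and weak convergence to zero, certifying $\lambda\in\sigma(-\Delta_{2,f})$.

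The natural ansatz, following the construction in \cite{char-lu-2}, is a radial oscillating function supported on a long annulus pushed out to infinity. Select radii $R_k\to\infty$ and lengths $L_k\to\infty$ (to be determined), together with smooth cutoffs $\chi_k\in C_c^\infty([0,\infty))$ supported in $A_k:=[R_k,R_k+L_k]$, identically $1$ on $[R_k+1,R_k+L_k-1]$, with $|\chi_k'|,|\chi_k''|\leq C$, and set
$$\phi_k(x)=\chi_k(r(x))\,\cos\bigl(\sqrt{\lambda}\,(r(x)-R_k)\bigr),$$
where $r$ is regularized near the cut locus by a smoothing argument as in Lemma~4.2 of~\cite{char-lu-2}. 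Writing $u(r)=\cos(\sqrt{\lambda}(r-R_k))$, so that $u''+\lambda u=0$, a pointwise computation on the full-measure smooth locus gives
$$(-\Delta_f-\lambda)\phi_k \;=\; -\chi_k''\,u \;-\; 2\chi_k'\,u' \;-\; (\chi_k u)'(r)\,\Delta_f r.$$
The first two summands are bounded and supported in the transition strips $T_k:=[R_k,R_k+1]\cup[R_k+L_k-1,R_k+L_k]$, contributing at most $O(V_f(T_k))$ to $\|(-\Delta_f-\lambda)\phi_k\|_2^2$, as does the boundary portion $\chi_k'\,u\,\Delta_f r$ of the last term. The delicate bulk piece $\chi_k u'\,\Delta_f r$, supported throughout $A_k$, is controlled by the hypothesis $\limsup_{r\to\infty}\Delta_f r\leq 0$: although $\Delta_f r$ need not be bounded below pointwise, the distributional one-sided bound combined with an integration by parts against the test function in the $L^2$ dual pairing transfers the $\Delta_f r$ factor onto a radial derivative of the smooth coefficient $\chi_k u'$, plus a boundary term absorbed into $T_k$, yielding an error of size $o(1)\cdot V_f(A_k)$ as $R_k\to\infty$.

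Since $\|\phi_k\|_2^2\gtrsim V_f(A_k)$ (the squared cosine averaging to $\tfrac12$ over many periods once $L_k\sqrt{\lambda}\gg 1$; for $\lambda=0$ one simply takes $\phi_k=\chi_k(r)$), the Rayleigh-type ratio $\|(-\Delta_f-\lambda)\phi_k\|_2/\|\phi_k\|_2$ is dominated by $V_f(T_k)/V_f(A_k)+o(1)$. The main obstacle is thus to select $(R_k,L_k)$ so that $V_f(T_k)/V_f(A_k)\to 0$. If $\mathrm{vol}_f(M)=\infty$, a dyadic pigeonhole on scales $\{[2^j,2^{j+1}]\}_j$ produces such a sequence. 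If $\mathrm{vol}_f(M)<\infty$, the hypothesis that the weighted volume does not decay exponentially at $x_o$ — namely $\liminf_{R\to\infty}R^{-1}\log\bigl(\mathrm{vol}_f(M)-V_f(x_o,R)\bigr)=0$ — is exactly what is needed to apply a pigeonhole argument to the positive decreasing function $R\mapsto\mathrm{vol}_f(M)-V_f(x_o,R)$ and extract a subsequence $R_k\to\infty$ along which the boundary-to-bulk ratio vanishes. These choices produce the desired Weyl sequence, completing the proof.
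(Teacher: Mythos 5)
Your overall architecture (radial test functions $\chi_k(r)\cos(\sqrt{\lambda}(r-R_k))$ supported on distant annuli, a smoothing $\tilde r$ of $r$ near the cut locus, and a pigeonhole selection of $(R_k,L_k)$ so that the boundary-to-bulk volume ratio vanishes, with the non-exponential-decay hypothesis used exactly in the finite-volume case) matches the paper's strategy, which is carried out by reduction to Theorem 1.1 of \cite{char-lu-2} together with the integral estimates of Lemma \ref{DeltarVolEst}. However, there is a genuine gap at the decisive analytic step. The classical Weyl criterion you invoke requires $\|(-\Delta_f-\lambda)\phi_k\|_{L^2}/\|\phi_k\|_{L^2}\to 0$, and the bulk term forces you to bound $\int_{A_k}\chi_k^2(u')^2(\Delta_f\tilde r)^2e^{-f}dv$. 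The hypothesis $\limsup_{r\to\infty}\Delta_f r\le 0$, combined with the divergence theorem, yields only a one-sided pointwise bound and hence only an $L^1$ control $\int_{A_k}|\Delta_f\tilde r|\,e^{-f}dv\lesssim \eps V_f+\text{(boundary terms)}$ --- this is precisely Lemma \ref{DeltarVolEst} --- and an $L^1$ bound on $\Delta_f\tilde r$ gives no bound whatsoever on its $L^2$ norm ($\Delta_f\tilde r$ may equal $-N$ on a set of weighted measure $V/N$, making the $L^1$ integral of order $V$ but the $L^2$ integral of order $NV$). Your proposed fix, ``integration by parts against the test function in the $L^2$ dual pairing,'' does not work: integration by parts linearizes a pairing $\int g\,\chi_k u'\,\Delta_f\tilde r\,e^{-f}dv$ into $-\int\langle\nabla(g\chi_k u'),\nabla\tilde r\rangle e^{-f}dv$ plus boundary terms, but to estimate an $L^2$ norm by duality you must take the supremum over arbitrary $g$ in the unit ball of $L^2$, whose gradient is not controlled; so the square on $\Delta_f\tilde r$ cannot be removed this way. (The same problem already affects the transition-strip term $\chi_k'u\,\Delta_f\tilde r$, which you claim is harmlessly ``absorbed into $T_k$.'')

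This obstruction is exactly why the paper does not use the classical criterion: it appeals to the generalized Weyl criterion of \cite{char-lu-2} (see also \cite{Lu-Zhou_2011}), in which the quantity $\|(H-\lambda)\psi_k\|_{L^2}$ is replaced by a weaker one controlled by the $L^1$ norm of $(H-\lambda)\psi_k$ (together with uniform bounds on the $\psi_k$), so that Lemma \ref{DeltarVolEst} suffices. To close your argument you would need either to import that generalized criterion, or to impose a two-sided bound on $\Delta_f\tilde r$, which the hypotheses of Theorem \ref{thm63} do not provide. The rest of your outline (disjoint supports giving weak convergence to zero, the $\lambda=0$ case, the dyadic pigeonhole in the infinite-volume case and its analogue for the tail volume $\vol_f(M)-V_f(x_o,R)$ in the finite-volume case) is consistent with the paper's intended proof.
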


We apply Proposition 3.1 of~\cite{char-lu-2} to find a smooth approximation $\tilde r$ to $r$ on $M$ with the same properties taking into account the weighted volume (see also~\cite{sil}). The proof of the above theorem resembles that of Theorem 1.1 in~\cite{char-lu-2}, once we set up the following estimate. We omit the details of the proof of both results.

Let $B (r)= B_{x_o}(r)$ be the ball of radius $r$ in $M$ at $x_o$ and denote its weighted volume by $V_f(r)$. As in the proof of Lemma 4.4~\cite{char-lu-2} (see also \cite{sil}), we get

\begin{lem} \label{DeltarVolEst}
Suppose that~\eqref{DeltaEst} is valid on $M$ in the sense of distribution.  Then we have the following two cases
\begin{enumerate}
\item [(a)]
Whenever  $\vol_f(M)$ is infinite, for any  $\eps>0$, and   $r_1>0$ {large enough},  there exists a $K=K(\eps, r_1)$ such that for any $r_2>K$,
we have
\begin{equation}\label{wsx-1}
\int_{B(r_2)\setminus B(r_1)} |\Delta_f \tilde{r}| \leq \eps \, \, V_f(r_2+1) + 2 ;
\end{equation}
\item [(b)]
Whenever  $\vol_f(M)$ is finite,  for any $\eps>0$ there exists a $K(\eps)>0$  such that for any $r_2>K$, we have
\[
\int_{M \setminus B(r_2)} |\Delta_f \tilde{r}| \leq  \eps \, \, (\vol(M)-V_f(r_2))+2  \vol(\pa B(r_2)).
\]
\end{enumerate}
\end{lem}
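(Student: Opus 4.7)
The plan is to reduce the lemma to an application of the divergence theorem applied to a smooth approximant $\tilde r$ of $r$. First I would invoke Proposition~3.1 of~\cite{char-lu-2}, which produces, for each $\eps > 0$, a smooth function $\tilde r$ on $M$ with $|\tilde r - r| \leq 1$, $|\n \tilde r| \leq 1+\eps$, and the pointwise upgrade
\[
\Delta_f \tilde r \leq \eps
\]
on $M \setminus B(r_0(\eps))$ of the distributional hypothesis~\eqref{DeltaEst}. This step is the substantive input: converting \eqref{DeltaEst} into a smooth pointwise inequality while simultaneously controlling $|\n\tilde r|$ on a manifold with nontrivial cut locus is the main conceptual obstacle, and I would simply quote the earlier construction rather than redo it.

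Given this $\tilde r$, I decompose $|\Delta_f \tilde r| = 2(\Delta_f \tilde r)^+ - \Delta_f \tilde r$. For case (a), I fix $r_1 > r_0(\eps)$, so that the positive-part contribution to the integral over the annulus $B(r_2)\setminus B(r_1)$ is at most $\eps\, V_f(r_2+1)$. For the signed integral I apply the weighted divergence identity
\[
\int_{B(r_2)\setminus B(r_1)} \Delta_f \tilde r \; e^{-f}\, dv = \int_{\pa B(r_2)} \<\n \tilde r, \nu\>\, e^{-f}\, dS - \int_{\pa B(r_1)} \<\n \tilde r, \nu\>\, e^{-f}\, dS,
\]
which in absolute value is controlled by $(1+\eps)(\vol_f(\pa B(r_1)) + \vol_f(\pa B(r_2)))$. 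The inner-boundary term depends only on the fixed $r_1$ and is absorbed into the ``$+2$'' (after a minor adjustment of $r_1$ ensuring $\vol_f(\pa B(r_1))\leq 1$). For the outer boundary I invoke the coarea identity $V_f(r_2+1) - V_f(r_2) = \int_{r_2}^{r_2+1} \vol_f(\pa B(s))\, ds$, which lets me replace $r_2$ by some $s\in[r_2,r_2+1]$ satisfying $\vol_f(\pa B(s)) \leq V_f(r_2+1) - V_f(r_2) \leq V_f(r_2+1)$; this contribution is then absorbed into the $\eps\, V_f(r_2+1)$ term after rescaling $\eps$, and explains why the statement involves $V_f(r_2+1)$ rather than $V_f(r_2)$.

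For case (b), I apply the divergence theorem instead on the finite-volume end $M\setminus B(r_2)$, which has the single boundary $\pa B(r_2)$ and weighted volume $\vol_f(M) - V_f(r_2)$. The positive part of $\Delta_f \tilde r$ supplies the $\eps\,(\vol(M) - V_f(r_2))$ summand, while the single boundary integral produces $2\vol(\pa B(r_2))$ after absorbing the factor $(1+\eps)$ from $|\n\tilde r|\leq 1+\eps$. No coarea trick is required here because the annular weighted volume $\vol(M) - V_f(r_2)$ itself tends to zero as $r_2\to\infty$, so the bound assumes the stated form directly.
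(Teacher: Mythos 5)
Your overall architecture (smoothing $r$, writing $|\Delta_f\tilde r| = 2(\Delta_f\tilde r)^+ - \Delta_f\tilde r$, and converting the signed integral into boundary fluxes) is the right one --- the paper in fact gives no proof at all, deferring to Lemma 4.4 of \cite{char-lu-2}, which runs along these lines --- and your case (b) is essentially sound. But your treatment of the outer boundary term in case (a) has a genuine gap. After the coarea selection of $s\in[r_2,r_2+1]$ you control the flux through $\pa B(s)$ only by $(1+\eps)\bigl(V_f(r_2+1)-V_f(r_2)\bigr)\le (1+\eps)\,V_f(r_2+1)$, and then assert that this is ``absorbed into the $\eps\,V_f(r_2+1)$ term after rescaling $\eps$.'' That is impossible: the coefficient $(1+\eps)$ is bounded below by $1$, so no rescaling turns it into $\eps$, and the estimate you actually obtain reads $\int|\Delta_f\tilde r|\le (1+C\eps)\,V_f(r_2+1)+\cdots$, which is useless for the intended application (the whole point is that the coefficient of $V_f(r_2+1)$ can be made arbitrarily small). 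What rescues the argument is a piece of sign information you threw away by taking absolute values of both boundary fluxes: the outward flux of $\tilde r$ through the outer boundary is essentially nonnegative. Concretely, integrate over sublevel sets $D(t)=\{\tilde r<t\}$ rather than metric balls (note $B(r_2)\setminus B(r_1)\subset D(t_2)\setminus D(t_1)$ for suitable $t_i$ since $|\tilde r-r|$ is bounded); then the outward normal derivative of $\tilde r$ on $\pa D(t_2)$ equals $|\n\tilde r|\ge 0$, so $-\int_{D(t_2)\setminus D(t_1)}\Delta_f\tilde r\,e^{-f}dv\le \int_{\pa D(t_1)}|\n\tilde r|\,e^{-f}dA$ and the outer boundary drops out entirely. (Alternatively, one can show $\vol_f(\pa B(s))\le C\eps\,V_f(s)$ for all large $s$ directly from $\Delta_f\tilde r\le\eps$ by comparison of the weighted volume element along minimal geodesics; your raw bound $V_f(r_2+1)-V_f(r_2)\le V_f(r_2+1)$ is too crude to substitute for this.)

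A second, smaller error: in case (a) you propose to ``adjust $r_1$ so that $\vol_f(\pa B(r_1))\le 1$'' in order to fit the inner boundary term under the ``$+2$.'' No such adjustment exists in general --- $\vol_f(\pa B(r_1))$ can be large for every large $r_1$, already in Euclidean space. The inner boundary contribution is simply a constant depending on $r_1$, and the correct mechanism is the one encoded in the statement itself: since $\vol_f(M)=\infty$ in case (a), one chooses $K=K(\eps,r_1)$ so large that $\eps\,V_f(K)$ dominates this constant, then rescales $\eps$; this is precisely why $K$ is allowed to depend on $r_1$. With these two repairs (and the routine caveats that the divergence theorem should be applied on regular level sets, and in case (b) the flux at infinity must be shown to vanish along a subsequence of radii with $\vol_f(\pa B(R_i))\to 0$), your outline becomes a correct proof.
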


If we assume that $\textup{Ric}_f^q$ is asymptotically nonnegative, then we have the following stronger result for the $L^p$ spectra.

\begin{thm}\label{thm61}
Let $(M^n,g,e^{-f} dv)$ be a weighted manifold such that for some $q>0$
\[
\liminf_{r\to\infty} {\rm Ric}_f^q\geq 0.
\]
Then the $L^p$ essential spectrum of $\Delta_f$ is $[0,\infty)$ {for all $p\in[1,\infty]$}.
\end{thm}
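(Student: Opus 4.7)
The plan is to combine the $L^2$ computation of Theorem~\ref{thm62} with the $p$-independence result of Theorem~\ref{ThmLp}. The assumption $\liminf_{r\to\infty}\mathrm{Ric}_f^q\geq 0$ is strictly stronger than the radial asymptotic nonnegativity of Definition~\ref{defnonnegative} needed by Theorem~\ref{thm62}, and by continuity it also gives a global lower bound $\mathrm{Ric}_f^q\geq -K$ for some $K\geq 0$, which is one of the hypotheses of Theorem~\ref{ThmLp}. Once the uniformly subexponential weighted volume growth required by Theorem~\ref{ThmLp} is verified, combining the two theorems will yield $\sigma(-\Delta_{p,f})=\sigma(-\Delta_{2,f})=[0,\infty)$ for all $p\in[1,\infty]$.

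The central analytic step is therefore the volume growth estimate. For any $\eta>0$, choose $R_\eta$ so that $\mathrm{Ric}_f^q\geq -\eta^2$ on $M\setminus B_{x_o}(R_\eta)$. Revisiting the proof of Theorem~\ref{L0-new} with this sharpened lower bound gives $\Delta_f r\leq (n+q)/r+\sqrt{n+q}\,\eta$ outside $B_{x_o}(R_\eta)$. Integrating the identity $\partial_r\log J_f=\Delta_f r$ for the weighted volume density $J_f$ in polar coordinates around $x_o$ yields $V_f(x_o,R)\leq C\,V_f(x_o,1)\,R^{n+q+1}\,e^{C\eta R}$ for $R\geq R_\eta$, which is subexponential with rate $O(\eta)$. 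For a general base point $x$, two regimes must be handled: when $d(x,x_o)$ is bounded the estimate at $x_o$ transfers up to constants, while when $d(x,x_o)$ is large enough that $B(x,R)\subset M\setminus B_{x_o}(R_\eta)$, Lemma~\ref{VolCom} applied with the sharpened bound $-\eta^2$ directly gives $V_f(x,R)\leq V_f(x,1)\,e^{C\eta R+C'}$. Letting $\eta\to 0$ produces the uniformly subexponential estimate. The same estimate also rules out exponential decay of the complementary weighted volume in the finite-volume branch of Theorem~\ref{thm62}, so the remaining hypothesis of Theorem~\ref{thm62} is satisfied as well.

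With the subexponential volume growth established, Theorem~\ref{ThmLp} gives $\sigma(-\Delta_{p,f})=\sigma(-\Delta_{2,f})$ for every $p\in[1,\infty]$, and Theorem~\ref{thm62} identifies $\sigma(-\Delta_{2,f})=[0,\infty)$, completing the proof.

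The principal technical obstacle will be the intermediate regime where $d(x,x_o)$ is comparable to $R_\eta$ and the ball $B(x,R)$ straddles the boundary between the region with only the global bound $-K$ and the region with the sharpened bound $-\eta^2$. Lemma~\ref{VolCom} uses a single global lower bound, so this case requires either a patching argument on concentric annuli or, more cleanly, a transfer to the warped product $\tilde M_\eps$ (for $\eps$ sufficiently small), where the ambient Ricci curvature inherits the asymptotic nonnegativity and a classical Bishop-Gromov argument for asymptotically nonnegative Ricci curvature supplies polynomial volume growth, which is then pushed down to $M$ through the relation $\eps^q V_f(x,r)\approx \tilde V((x,\omega),r)$.
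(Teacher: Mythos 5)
Your architecture coincides with the paper's: Theorem~\ref{thm62} gives the $L^2$ statement, Theorem~\ref{ThmLp} gives $p$-independence, and the whole content of the proof is the uniform subexponential volume growth, which the paper isolates as a separate theorem. The only real difference is that where you argue directly, the paper argues by contradiction: if uniformity fails, there are $x_i$ and $R_i\to\infty$ with $C\,V_f(x_i,1)e^{\eps_o R_i}\le V_f(x_i,R_i)$; one shows $d(x_i,x_o)\to\infty$ and $d(x_i,x_o)\le 2R_i$ (otherwise the localized Lemma~\ref{VolCom} with the improved bound $-\eps$ on $B_{x_i}(R_i)$ already contradicts the assumption), and the intermediate regime you flag is then closed exactly by your ``patching'' alternative: taking $y$ on a minimal geodesic from $x_i$ to $x_o$ with $d(y,x_o)$ bounded, one chains $V_f(x_o,1)\le e^{\tilde C\sigma}V_f(y,1)\le V_f(x_i,d(x_i,x_o)-\sigma)\le C'V_f(x_i,1)e^{\frac14\eps_o(d(x_i,x_o)-\sigma)}$, which together with $V_f(x_o,3R_i)\ge V_f(x_i,R_i)$ contradicts subexponential growth at the single point $x_o$ (itself a consequence of the sharpened Laplacian comparison, as in your first step). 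So your plan is sound and the warped-product detour is unnecessary. One sentence you should not leave as stated: uniform subexponential growth does not by itself rule out exponential decay of the tail volume $\vol_f(M)-V_f(r)$ in the finite-volume branch of Theorem~\ref{thm62}; the paper also invokes Theorem~\ref{thm62} without comment on this point, so at worst you share its omission, but your claim that ``the same estimate also rules out exponential decay'' is not justified as written.
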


By Theorem \ref{thm62} we know that the $L^2$ essential spectrum will be $[0,\infty)$ in the above case. In order to generalize the result for all $p$ by applying Theorem~\ref{ThmLp}, we need to prove the following volume comparison theorem which is of its own interest.

\begin{thm}
Under the assumptions of Theorem{\rm ~\ref{thm61}} the weighted volume of $M$ grows uniformly subexponentially.
\end{thm}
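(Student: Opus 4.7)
The plan is to bootstrap off of the asymptotic Laplacian comparison of Lemma~\ref{L1}, now applied with respect to an arbitrary base point $x\in M$ rather than just $x_o$, and then integrate. Given $\eps>0$, first choose $\delta>0$ with $2\sqrt{(n+q)\delta}<\eps/2$. By the hypothesis $\liminf_{r\to\infty}\textup{Ric}_f^q\geq 0$, there exists $R_0>0$ such that $\textup{Ric}_f^q\geq -\delta$ on $M\setminus B_{x_o}(R_0)$, and by smoothness the global lower bound $\textup{Ric}_f^q\geq -K_0$ holds for some $K_0\geq\delta$. The key geometric observation is that for any $x\in M$ with $L=d(x,x_o)$ and any unit-speed minimizing geodesic $\gamma$ from $x$, the triangle inequality shows $\gamma(s)\notin B_{x_o}(R_0)$ whenever $s\notin[L-R_0,L+R_0]$; the ``bad'' parameter set has length at most $2R_0$, independent of $x$.

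Along such a geodesic $\gamma$, set $u(s):=\Delta_f r_x(\gamma(s))$ where $r_x(y)=d(x,y)$. The Bochner identity \eqref{fBochner} together with the trace inequality $\frac{1}{n}(\Delta u)^2+\frac{1}{q}\langle\nabla f,\nabla u\rangle^2\geq\frac{1}{n+q}(\Delta_f u)^2$ used in Theorem~\ref{L0-new} yields the Riccati inequality $u'+\tfrac{1}{n+q}u^2\leq K(s)$, with $K(s)=\delta$ outside the bad interval and $K(s)\leq K_0$ inside, subject to $u(0^+)=+\infty$. Comparison with the explicit solution $\sqrt{(n+q)K}\coth(\sqrt{K/(n+q)}\,s)$ of the constant-coefficient Riccati equation bounds $u(s)$ by some $u^*(s)$ which, after the bad interval, relaxes within a time $T_0(\eps)$ independent of $L$ back down to $u^*(s)\leq 2\sqrt{(n+q)\delta}$. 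Tracking the three regimes---small $s$ with the Euclidean-type blowup $(n+q)/s$, bounded-length contribution from the $2R_0+T_0$ window, and rate $\sqrt{(n+q)\delta}$ afterwards---gives the $x$-uniform estimate $\int_1^r u^*(s)\,ds\leq (n+q)\log r+\tfrac{\eps}{2}r+C(\eps)$, valid for all $r\geq 1$.

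Since $\Delta_f r_x\leq u^*(d(\cdot,x))$ in the sense of distribution, the weighted area $A_f(x,s):=\int_{\partial B_x(s)}e^{-f}$ satisfies $\frac{d}{ds}A_f(x,s)\leq u^*(s)A_f(x,s)$, so $A_f(x,r)\leq A_f(x,1)\,r^{n+q}\,e^{(\eps/2)r+C(\eps)}$. Integrating in $r$ and using $r^{n+q}e^{(\eps/2)r}\leq C(\eps)e^{\eps r}$, together with the local bound $A_f(x,1)\leq C(\eps)V_f(x,1)$ (which follows from Lemma~\ref{VolCom} applied on the bounded-curvature ball $B_x(2)$ with the global constant $K_0$), we conclude $V_f(x,r)\leq V_f(x,1)+\int_1^r A_f(x,s)\,ds\leq C'(\eps)V_f(x,1)\,e^{\eps r}$, which is exactly uniform subexponential growth with $C'(\eps)$ independent of $x$.

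The main obstacle in this plan is the Riccati analysis of the second paragraph: one must verify that the peak value of $u$ in the bad interval and the subsequent relaxation time $T_0(\eps)$ are genuinely independent of $L=d(x,x_o)$. This uniformity follows from the monotone dependence of the Riccati flow on its right-hand side and on the initial data, but requires separate handling of the case when $L\leq R_0$ (so the bad interval abuts the singular initial condition and no ``pre-good'' region exists) versus $L\gg R_0$ (where $u$ has already decayed close to $\sqrt{(n+q)\delta}$ before entering the bad window). Once this is done, the rest of the argument is routine integration combined with the standard Bishop-type bounds already established in Lemma~\ref{VolCom}.
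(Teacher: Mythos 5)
Your proof is correct in outline, but it takes a genuinely different route from the paper's. The paper argues by contradiction and compactness: assuming uniform subexponential growth fails, it extracts points $x_i$ and radii $R_i\to\infty$ violating the bound, shows first that $d(x_i,x_o)\to\infty$ and then that $d(x_i,x_o)\le 2R_i$ (otherwise $B_{x_i}(R_i)$ lies entirely in the region where ${\rm Ric}_f^q\ge-\eps$, and the localized version of Lemma~\ref{VolCom} already gives subexponential growth there), and finally chains volume comparisons through an intermediate point $y$ near $x_o$ to force $V_f(x_o,3R_i)\ge Ce^{\eps_o R_i/2}$, contradicting subexponential growth at the fixed base point. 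Your argument is instead direct and quantitative: you rerun the Riccati--Jacobian comparison underlying Theorem~\ref{L0-new} along geodesics emanating from an arbitrary $x$, exploiting the fact that any minimizing geodesic from $x$ spends parameter length at most $2R_0$ inside $B_{x_o}(R_0)$, so the region of very negative curvature contributes only a bounded multiplicative factor to the weighted area element, uniformly in $x$. The step you flag does go through: entering the bad window one has $u\le (n+q)/(L-R_0)+\sqrt{(n+q)\delta}$ when $L-R_0\ge 1$, inside it $u'\le K_0$ allows growth of at most $2R_0K_0$, the case $L\le R_0+1$ reduces to the global bound $-K_0$ on an interval of length at most $2R_0+1$, and the relaxation time is $O(\sqrt{(n+q)/\delta})$; all of these are independent of $L$. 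What your approach buys is a self-contained, explicitly quantitative proof that bypasses the warped-product machinery behind Lemma~\ref{VolCom}; what the paper's approach buys is brevity, reusing the already-established (localized) volume comparison as a black box instead of redoing the comparison geometry at the level of Jacobians.
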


\begin{proof} We begin with some remarks on the volume comparison theorem Lemma~\ref{VolCom}. Firstly, using the same method as in the proof of the lemma, we conclude that if the curvature $\textup{Ric}_f^q$ is asymptotically nonnegative in the sense of Lemma~\ref{L1} at $x_o$, then the volume growth is  subexponential at $x_o$. Secondly, the volume comparison theorem can be localized. In other words, the volume comparison result of Lemma~\ref{VolCom} holds, whenever $\textup{Ric}_f^q\geq -K$ on the ball of radius $R$.

With the above remarks, the proof is essentially a compactness argument. Let $x_o\in M$ be a fixed point. If the theorem is not true, then there exists an $\eps_o>0$ such that for any $C>0$, there exist sequences $\{x_i\}\subset M$ and $\{R_i>0\}\subset \mathbb R$ such that $R_i\to \infty$ and
\begin{equation} \label{thm63e1}
C\,V_f(x_i,1)e^{\eps_o R_i} \leq V_f(x_i,R_i)
\end{equation}
for all $i\geq 0$. This implies that $d(x_i,x_o)\to \infty$ as $i\to \infty$. If not, then $V_f(x_i,1)$ has a uniform lower bound and the volume of the manifold grows exponentially by \eref{thm63e1}, which is a  contradiction.
At the same time Lemma \ref{VolCom} implies that inequality \eref{thm63e1} can only happen whenever
\begin{equation}\label{dist}
d(x_i,x_o)/ R_i\leq 2
\end{equation}
as $i\to\infty$. Otherwise, $B_{x_i}(R_i) \subset M\setminus B_{x_o}(R_i)$ and as a result, for any $\eps>0$ and $R_i$ large enough $\textup{Ric}_f^q\geq -\eps$ on $B_{x_i}(R_i)$. By Lemma \ref{VolCom} we have
\[
V_f(x_i,R_i)\leq C'\, V_f(x_i,1)e^{\eps  R_i}
\]
which is a clear contradiction to \eref{thm63e1} as $i\to \infty$.

Now applying \eref{dist} we have
\[
V_f(x_o, 3R_i)\geq V_f(x_i, R_i)\geq CV_f(x_i,1)e^{\eps_o R_i}.
\]

For any small $\eps>0$ we can find  $R_o>0$  a sufficiently large constant such that $\textup{Ric}_f^q\geq-\eps$ on  $M\setminus B_{x_o}(R_o)$. Fix  $\sigma>0$ and choose $i$  sufficiently large so that $\sigma+R_o <d(x_i,x_o)$. By Lemma \ref{VolCom}, for $\eps$  small enough and all $i$ sufficiently large we have
\[
V_f(x_i, d(x_i,x_o)-\sigma)\leq C'\,V_f(x_i,1) e^{\frac 12\eps_o(d(x_i,x_o)-\sigma)}.
\]
Let $y\in B_{x_i}(d(x_i,x_o)-\sigma-1)\cap B_{x_o}(\sigma+2)$. Then we have
\[
V_f(y,1)\leq V_f(x_i, d(x_i,x_o)-\sigma)\leq C'\,V_f(x_i,1) e^{\frac 1{4}\eps_o(d(x_i,x_o)-\sigma)}.
\]
On the other hand, the usual volume comparison theorem implies that
\[
V_f(x_o,1)\leq e^{\tilde C\sigma}V_f(y,1).
\]
Linking all the above inequalities, we have
\[
V_f(x_{o},3R_i)\geq C e^{\frac 12 \eps_o R_i}
\]
for all $i\geq 0$, which contradicts  the fact that the volume  of $M$ grows subexponentially with respect to one point.
\end{proof}

\begin{remark}
Let  $g$ be a smooth function of $M$. If we regard $g$ as a function on $\tilde M_\eps$, then we have
\[
\Delta_\eps g=\Delta_f g.
\]
Using this observation, we conclude that the $L^p$ spectrum of $\Delta_\eps$ on $\tilde M_\eps$ is $[0,\infty)$ whenever the $L^p$ spectrum of $\Delta_f$ on $M$ is $[0,\infty)$.
\end{remark}

\begin{remark}  It is interesting to observe that we can obtain information about the $L^2$ essential spectrum of the Laplacian  on a Riemannian  manifold with assumptions only on the Bakry-\'Emery Ricci tensor. For example, in the shrinking Ricci soliton case, where Bakry-\'Emery Ricci tensor is propositional to the Riemannian metric, we are able to compute the spectrum  of the Laplacian\cites{char-lu-2, Lu-Zhou_2011}.  Furthermore, we note that Silvares has demonstrated in \cite{sil} that the $L^2$ essential spectrum of the drifting Laplacian is $[0,\infty)$ whenever the Bakry-\'Emery Ricci  tensor, ${\rm Ric}_f$, is nonnegative and the weight function $f$ has sublinear growth.
\end{remark}

\begin{bibdiv}
\begin{biblist}

\bib{BE}{article}{
   author={Bakry, D.},
   author={{\'E}mery, Michel},
   title={Diffusions hypercontractives},
   language={French},
   conference={
      title={S\'eminaire de probabilit\'es, XIX, 1983/84},
   },
   book={
      series={Lecture Notes in Math.},
      volume={1123},
      publisher={Springer},
      place={Berlin},
   },
   date={1985},
   pages={177--206},
   review={\MR{889476 (88j:60131)}},
   doi={10.1007/BFb0075847},
}

\bib{BQ}{article}{
   author={Bakry, Dominique},
   author={Qian, Zhongmin},
   title={Volume comparison theorems without Jacobi fields},
   conference={
      title={Current trends in potential theory},
   },
   book={
      series={Theta Ser. Adv. Math.},
      volume={4},
      publisher={Theta, Bucharest},
   },
   date={2005},
   pages={115--122},
   review={\MR{2243959 (2007e:58048)}},
}

\bib{Char1}{article}{
   author={Charalambous, Nelia},
   title={On the $L^p$ independence of the spectrum of the Hodge
   Laplacian on non-compact manifolds},
   journal={J. Funct. Anal.},
   volume={224},
   date={2005},
   number={1},
   pages={22--48},
   issn={0022-1236},
   review={\MR{2139103 (2006e:58044)}},
   doi={10.1016/j.jfa.2004.11.003},
}

\bib{char-lu-2}{unpublished}{
author={Charalambous, Nelia},
author={Lu, Zhiqin},
title={The essential spectrum of the Laplacian},
note={arXiv:1211.3225},}

\bib{cc}{article}{
   author={Cheeger, J.},
   author={Colding, T.},
   title={On the structure of spaces with Ricci curvature bounded below, III},
   journal={J. Diff. Geom.},
   volume={54},
   date={2000},
   number={1},
   pages={37--74},,
   review={MR1815411 (2003a:53044)},
}

\bib{Davies}{book}{
   author={Davies, E. B.},
   title={Heat kernels and spectral theory},
   series={Cambridge Tracts in Mathematics},
   volume={92},
   publisher={Cambridge University Press},
   place={Cambridge},
   date={1990},
   pages={x+197},
   isbn={0-521-40997-7},
   review={\MR{1103113 (92a:35035)}},
}

\bib{Dav1}{article}{
   author={Davies, E. B.},
   title={The state of the art for heat kernel bounds on negatively curved
   manifolds},
   journal={Bull. London Math. Soc.},
   volume={25},
   date={1993},
   number={3},
   pages={289--292},
   issn={0024-6093},
   review={\MR{1209255 (94f:58121)}},
   doi={10.1112/blms/25.3.289},
}
	
\bib{DoUn}{article}{
   author={Dobarro, Fernando},
   author={{\"U}nal, B{\"u}lent},
   title={Curvature of multiply warped products},
   journal={J. Geom. Phys.},
   volume={55},
   date={2005},
   number={1},
   pages={75--106},
   issn={0393-0440},
   review={\MR{2157416 (2006h:53033)}},
   doi={10.1016/j.geomphys.2004.12.001},
}

\bib{dod}{article}{
   author={Dodziuk, Jozef},
   title={Maximum principle for parabolic inequalities and the heat flow on
   open manifolds},
   journal={Indiana Univ. Math. J.},
   volume={32},
   date={1983},
   number={5},
   pages={703--716},
   issn={0022-2518},
   review={\MR{711862 (85e:58140)}},
   doi={10.1512/iumj.1983.32.32046},
}

\bib{Fuk}{article}{
   author={Fukaya, Kenji},
   title={Collapsing of Riemannian manifolds and eigenvalues of Laplace
   operator},
   journal={Invent. Math.},
   volume={87},
   date={1987},
   number={3},
   pages={517--547},
   issn={0020-9910},
   review={\MR{874035 (88d:58125)}},
   doi={10.1007/BF01389241},
}

\bib{gt}{book}{
    author={Gilbarg, David},
   author={Trudinger, Neil S.},
   title={Elliptic partial differential equations of second order},
   series={Classics in Mathematics},
   note={Reprint of the 1998 edition},
   publisher={Springer-Verlag},
   place={Berlin},
   date={2001},
   pages={xiv+517},
   isbn={3-540-41160-7},
   review={\MR{1814364 (2001k:35004)}},
}
\bib{HemVo}{article}{
   author={Hempel, Rainer},
   author={Voigt, J{\"u}rgen},
   title={On the $L_p$-spectrum of Schr\"odinger operators},
   journal={J. Math. Anal. Appl.},
   volume={121},
   date={1987},
   number={1},
   pages={138--159},
   issn={0022-247X},
   review={\MR{869525 (88i:35114)}},
   doi={10.1016/0022-247X(87)90244-7},
}

\bib{HemVo2}{article}{
   author={Hempel, Rainer},
   author={Voigt, J{\"u}rgen},
   title={The spectrum of a Schr\"odinger operator in $L_p({\bf R}^\nu)$ is $p$-independent},
   journal={Comm. Math. Phys.},
   volume={104},
   date={1986},
   number={2},
   pages={243--250},
   issn={0010-3616},
   review={\MR{836002 (87h:35247)}},
}
	
\bib{Kato}{book}{
   author={Kato, Tosio},
   title={Perturbation theory for linear operators},
   series={Die Grundlehren der mathematischen Wissenschaften, Band 132},
   publisher={Springer-Verlag New York, Inc., New York},
   date={1966},
   pages={xix+592},
   review={\MR{0203473 (34 \#3324)}},
}
	
\bib{LiNotes}{unpublished}{
author={Li, Peter},
title={Harmonic functions and applications to complete manifolds, Lecture notes. },
date={2004},
note={Author's website},}
	
\bib{XDLi1}{article}{
   author={Li, Xiang-Dong},
   title={Liouville theorems for symmetric diffusion operators on complete
   Riemannian manifolds},
   language={English, with English and French summaries},
   journal={J. Math. Pures Appl. (9)},
   volume={84},
   date={2005},
   number={10},
   pages={1295--1361},
   issn={0021-7824},
   review={\MR{2170766 (2006f:58046)}},
   doi={10.1016/j.matpur.2005.04.002},
}

\bib{XDLi2}{article}{
   author={Li, Xiang-Dong},
   title={Perelman's entropy formula for the Witten Laplacian on Riemannian
   manifolds via Bakry-Emery Ricci curvature},
   journal={Math. Ann.},
   volume={353},
   date={2012},
   number={2},
   pages={403--437},
   issn={0025-5831},
   review={\MR{2915542}},
   doi={10.1007/s00208-011-0691-y},
}

\bib{XDLi3}{unpublished}{
   author={Li, Xiang-Dong},
   title={Hamilton's Harnack inequality and the W-entropy formula on complete Riemannian manufolds},
   note={arXiv:1303.1242v2},
}

\bib{LiLi}{unpublished}{
author={Li, Songzi},
author={Li, Xiang-Dong},
title={Perelman's entropy formula for the Witten Laplacian on manifolds with time dependent metrics and potentials},
note={arXiv:1303.6019},}

\bib{LiYau}{article}{
   author={Li, Peter},
   author={Yau, Shing-Tung},
   title={On the parabolic kernel of the Schr\"odinger operator},
   journal={Acta Math.},
   volume={156},
   date={1986},
   number={3-4},
   pages={153--201},
   issn={0001-5962},
   review={\MR{834612 (87f:58156)}},
   doi={10.1007/BF02399203},
}
	
\bib{lott}{article}{
   author={Lott, John},
   title={Some geometric properties of the Bakry-\'Emery-Ricci tensor},
   journal={Comment. Math. Helv.},
   volume={78},
   date={2003},
   number={4},
   pages={865--883},
   %issn={0010-2571},review={\MR{2016700 (2004i:53044)}},   doi={10.1007/s00014-003-0775-8},
}

\bib{Lu-Row}{unpublished}{
author={Lu, Zhiqin},
author={Rowlett, Julie},
title={Eigenvalues of collapsing domains and drift Laplacians},
note={arXiv:1003.0191v3},}

\bib{MuW}{article}{
   author={Munteanu, Ovidiu},
   author={Wang, Jiaping},
   title={Smooth metric measure spaces with non-negative curvature},
   journal={Comm. Anal. Geom.},
   volume={19},
   date={2011},
   number={3},
   pages={451--486},
   issn={1019-8385},
   review={\MR{2843238}},
}

\bib{Lu-Zhou_2011}{article}{
   author={Lu, Zhiqin},
   author={Zhou, Detang},
   title={On the essential spectrum of complete non-compact manifolds},
   journal={J. Funct. Anal.},
   volume={260},
   date={2011},
   number={11},
   pages={3283--3298},
   issn={0022-1236},
   review={\MR{2776570 (2012e:58058)}},
   doi={10.1016/j.jfa.2010.10.010},
}

\bib{MuW2}{article}{
   author={Munteanu, Ovidiu},
   author={Wang, Jiaping},
   title={Analysis of weighted Laplacian and applications to Ricci solitons},
   journal={Comm. Anal. Geom.},
   volume={20},
   date={2012},
   number={1},
   pages={55--94},
   issn={1019-8385},
   review={\MR{2903101}},
}

\bib{Qi}{article}{
   author={Qian, Zhong Min},
   title={Gradient estimates and heat kernel estimate},
   journal={Proc. Roy. Soc. Edinburgh Sect. A},
   volume={125},
   date={1995},
   number={5},
   pages={975--990},
   issn={0308-2105},
   review={\MR{1361628 (97c:58153)}},
   doi={10.1017/S0308210500022599},
}

\bib{ReSiII}{book}{
   author={Reed, Michael},
   author={Simon, Barry},
   title={Methods of modern mathematical physics. II. Fourier analysis,
   self-adjointness},
   publisher={Academic Press [Harcourt Brace Jovanovich Publishers]},
   place={New York},
   date={1975},
   pages={xv+361},
   review={\MR{0493420 (58 \#12429b)}},
}

\bib{SaCo}{article}{
   author={Saloff-Coste, Laurent},
   title={Uniformly elliptic operators on Riemannian manifolds},
   journal={J. Differential Geom.},
   volume={36},
   date={1992},
   number={2},
   pages={417--450},
   issn={0022-040X},
   review={\MR{1180389 (93m:58122)}},
}

\bib{SchoenYau_bk}{book}{
   author={Schoen, R.},
   author={Yau, S.-T.},
   title={Lectures on differential geometry},
   series={Conference Proceedings and Lecture Notes in Geometry and
   Topology, I},
   note={Lecture notes prepared by Wei Yue Ding, Kung Ching Chang [Gong Qing
   Zhang], Jia Qing Zhong and Yi Chao Xu;
   Translated from the Chinese by Ding and S. Y. Cheng;
   Preface translated from the Chinese by Kaising Tso},
   publisher={International Press},
   place={Cambridge, MA},
   date={1994},
   pages={v+235},
   isbn={1-57146-012-8},
}

\bib{sil}{unpublished}{
author={L. Silvares},
title={On the essential spectrum of the Laplacian and the drifted Laplacian},
note={arXiv:1302.1834},
year={2013},}

\bib{sturm}{article}{
   author={Sturm, Karl-Theodor},
   title={On the $L^p$-spectrum of uniformly elliptic operators on
   Riemannian manifolds},
   journal={J. Funct. Anal.},
   volume={118},
   date={1993},
   number={2},
   pages={442--453},
   issn={0022-1236},
   review={\MR{1250269 (94m:58227)}},
   doi={10.1006/jfan.1993.1150},
}

\bib{WW}{article}{
   author={Wei, Guofang},
   author={Wylie, Will},
   title={Comparison geometry for the Bakry-Emery Ricci tensor},
   journal={J. Differential Geom.},
   volume={83},
   date={2009},
   number={2},
   pages={377--405},
   issn={0022-040X},
   review={\MR{2577473 (2011a:53064)}},
}

\bib{xu}{unpublished}{
author={Xu, Guoyi},
title={The short time asymptotics of Nash entropy},
note={arXiv: 1209.6591v2},}

\end{biblist}
\end{bibdiv}

%\listofchanges

%\nocite{*}

\end{document}